\sloppy \theoremstyle{plain}
\newtheorem{theorem}{Theorem}[subsection]
\newtheorem{proposition}[theorem]{Proposition}
\newtheorem{corollary}[theorem]{Corollary}
\newtheorem{lemma}[theorem]{Lemma}
\newtheorem{definition}[theorem]{Definition}
\newtheorem{notation}[theorem]{Notation}
\newtheorem{example}[theorem]{Example}
\newtheorem{remark}[theorem]{Remark}
\newtheorem{thmdef}[theorem]{Theorem-Definition}
\newcommand{\Z}{\mathbb Z}
\newcommand{\F}{\mathbb F}
\newcommand{\nir}[1]{{\color{red}{Nir: #1}}}
\newcommand{\RamiAlt}[1]{}
\newcommand{\RamiAbout}[1]{}
\newcommand{\lbl}[1]{\label{#1}}
\DeclareMathOperator{\GL}{GL}
\DeclareMathOperator{\val}{val}
\DeclareMathOperator{\ac}{ac}
\DeclareMathOperator{\Hom}{Hom}
\DeclareMathOperator{\ad}{ad}
\DeclareMathOperator{\Res}{Res}
\DeclareMathOperator{\Lie}{Lie}
\DeclareMathOperator{\rk}{rk}
\DeclareMathOperator{\Spec}{Spec}
\DeclareMathOperator{\Def}{Def}
\DeclareMathOperator{\Gal}{Gal}
\DeclareMathOperator{\VF}{{VF}}
\DeclareMathOperator{\VG}{{VG}}
\DeclareMathOperator{\RF}{{RF}}
\DeclareMathOperator{\Jet}{Jet}
\DeclareMathOperator{\Sch}{Sch}
\DeclareMathOperator{\Set}{Set}
\newcommand{\Q}{\mathbb Q}
\DeclareMathOperator{\Irr}{Irr}
\newcommand{\A}{\mathbb A}
\definecolor{dblue}{RGB}{6,69,173}
\definecolor{lblue}{RGB}{11,0,128}
\newcommand{\colorlinks}{true}
\newcommand{\linkcolor}{lblue}
\newcommand{\citecolor}{green}
\newcommand{\urlcolor}{dblue}
\newcommand{\linkbordercolor}{red}
\newcommand{\citebordercolor}{green}
\newcommand{\urlbordercolor}{cyan}
\newcommand{\hrefHid}[2]{
\hypersetup{urlbordercolor={1 1 1}}%
\hypersetup{urlcolor=black}%
\href{#1}{#2}%
\hypersetup{urlbordercolor=\urlbordercolor}%
\hypersetup{urlcolor=\urlcolor}%
}
\newcommand{\inhref}[2]{\hyperref[#1]{#2}}
\newcommand{\inhrefHid}[2]{%
\hypersetup{linkbordercolor={1 1 1}}%
\hypersetup{linkcolor=black}%
\inhref{#1}{#2}%
\hypersetup{linkbordercolor=\linkbordercolor}%
\hypersetup{linkcolor=\linkcolor}%
}
\newcommand{\defHref}[3]{\newcommand{#1}[1][#3]{\href{#2}{##1}}}
\newcommand{\defInhref}[3]{\newcommand{#1}[1][#3]{\inhref{#2}{##1}}}
\newcommand{\defHrefHid}[3]{\newcommand{#1}[1][#3]{\hrefHid{#2}{##1}}}
\newcommand{\defInhrefHid}[3]{\newcommand{#1}[1][#3]{\inhrefHid{#2}{##1}}}
\newcommand{\defHrefBoth}[3]{%
\expandafter\defHrefHid \csname #3Hid\endcsname {#1}{#2}%
\expandafter\defHref \csname #3Vis\endcsname {#1}{#2}%
}
\newcommand{\defInhrefBoth}[3]{%
  \expandafter\defInhrefHid \csname #3Hid\endcsname {#1}{#2}%
  \expandafter\defInhref \csname #3Vis\endcsname {#1}{#2}%
}
\newcommand{\defHrefBothVis}[3]{%
\defHrefBoth{#1}{#2}{#3}%
\expandafter\defHref \csname #3\endcsname {#1}{#2}%
}
\newcommand{\defInhrefBothVis}[3]{%
  \defInhrefBoth{#1}{#2}{#3}%
  \expandafter\defInhref \csname #3\endcsname {#1}{#2}%
}
\newcommand{\defHrefBothHid}[3]{%
\defHrefBoth{#1}{#2}{#3}%
\expandafter\defHrefHid \csname #3\endcsname {#1}{#2}%
}
\newcommand{\defInhrefBothHid}[3]{%
  \defInhrefBoth{#1}{#2}{#3}%
  \expandafter\defInhrefHid \csname #3\endcsname {#1}{#2}%
}
\sloppy \theoremstyle{plain}
\newtheorem*{theorem*}{Theorem}
\newtheorem*{notation*}{Notation}
\newtheorem*{definition*}{Definition}
\newtheorem*{remark*}{Remark}
\newtheorem*{conjecture*}{Conjecture}
\newtheorem{introtheorem}{Theorem}
\newtheorem{romantheorem}{Theorem}
\newtheorem{romancorollary}[romantheorem]{Corollary}
\DeclareMathOperator{\Syl}{Syl}
\DeclareMathOperator{\CH}{CH}
\renewcommand{\dim}{{\operatorname{dim}}}
\renewcommand{\Hom}{{\operatorname{Hom}}}
\title{Counting points of schemes over finite rings and counting representations of arithmetic lattices}
\author{Avraham Aizenbud and Nir Avni}
\begin{document}

\maketitle

\begin{abstract}
We relate the singularities of a scheme $X$ to the asymptotics of the number of points of $X$ over finite rings. This gives a partial answer to a question of Mustata. We use this result to count representations of arithmetic lattices. More precisely, if $\Gamma$ is an arithmetic lattice whose $\mathbb{Q}$-rank is greater than one, let $r_n(\Gamma)$ be the number of irreducible $n$-dimensional representations of $\Gamma$ up to isomorphism. We prove that there is a constant $C$ (for example, $C=746$ suffices) such that $r_n(\Gamma)=O(n^C)$ for every such $\Gamma$. This answers a question of Larsen and Lubotzky.
\end{abstract}

\setcounter{tocdepth}{2}
\tableofcontents

\section{Introduction}
\subsection{Main results}

This paper has two main results. The first is about counting points of schemes over finite rings. In the formulation of this result, we use the notions of local complete intersection and rational singularities, which we recall in \S\S\ref{subsec:sing}.

\begin{introtheorem} \label{thm:intro.count} Let $X$ be a scheme of finite type over $\mathbb{Z}$. Assume that the generic fiber $X_\mathbb{Q}:=X\times_{\Spec \mathbb{Z}}\Spec \mathbb{Q}$ of $X$ is reduced, absolutely irreducible, and a local complete intersection. The following conditions are equivalent:
\begin{enumerate}
\item For any $m$,
\[
\lim_{p \rightarrow \infty} \frac{|X(\mathbb{Z}/p^m)|}{p^{m\cdot \dim X_\mathbb{Q}}} =1.
\]
\item There is a finite set $S$ of prime numbers {and a constant C} such that
\[
\left| \frac{|X(\mathbb{Z}/p^m)|}{p^{m\cdot \dim X_\mathbb{Q}}}-1\right|<C p^{-1/2},
\]
for any prime $p\notin S$ and any $m \in \mathbb{N}$.
\item \label{item:intro.count.3} There is a finite set $S$ of prime numbers {and a constant C} such that
\[
\left|\frac{|X(\mathbb{Z}/p^m)|}{p^{m\cdot \dim X_\mathbb{Q}}}-\frac{|X(\mathbb{Z}/p)|}{p^{\dim X_\mathbb{Q}}}\right|<C p^{-1},
\]
for any prime $p\notin S$ and any $m \in \mathbb{N}$.
\item \label{item:intro.count.4} $X_{{\mathbb{Q}}}$ has rational singularities.
\end{enumerate}
\end{introtheorem}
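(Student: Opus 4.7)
The plan is to prove the four conditions equivalent by establishing the cycle $(4) \Rightarrow (3) \Rightarrow (2) \Rightarrow (1) \Rightarrow (4)$. The three ``horizontal'' steps among $(1),(2),(3)$ are formal consequences of the Lang--Weil estimate; the real content lies in the two bridges to condition $(4)$, which require $p$-adic integration on one side and Mustata's jet-scheme criterion on the other.

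For the easy steps: $(2)\Rightarrow(1)$ is immediate by letting $p\to\infty$. For $(3)\Rightarrow(2)$, after spreading out $X_\Q$ preserves its good properties for all $p$ outside a finite set $S$, so $X_{\F_p}$ is reduced, absolutely irreducible, and lci; the Lang--Weil bound then gives $\bigl||X(\F_p)|/p^{\dim X_\Q}-1\bigr|=O(p^{-1/2})$, which combined with $(3)$ by the triangle inequality yields $(2)$ with a new constant.

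For $(4)\Rightarrow(3)$, I would interpret the quotient $|X(\Z/p^m)|/p^{m\dim X_\Q}$ as the total mass of the canonical Serre--Oesterl\'e $p$-adic measure $\mu_X$ on $X(\Z_p)$ (well-defined by the lci hypothesis on $X_\Q$). Pulling $\mu_X$ back through a log-resolution $\pi:Y\to X_\Q$ (available after inverting finitely many primes) and applying the Denef--Loeser change-of-variables formula, the difference $|X(\Z/p^m)|/p^{m\dim X_\Q}-|X(\F_p)|/p^{\dim X_\Q}$ decomposes as a sum indexed by nonempty intersections of exceptional components. By Elkik's theorem, for an lci variety rational equals canonical, so all discrepancies of $\pi$ are nonnegative; combined with Deligne's bound on point counts of the smooth strata, each summand is $O(p^{-1})$, giving $(3)$.

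The main obstacle is $(1)\Rightarrow(4)$, which I argue contrapositively via Mustata's theorem: for an lci variety in characteristic zero, rational singularities are equivalent to every jet scheme $J_m(X_\Q)$ being irreducible of the expected dimension $(m+1)\dim X_\Q$. If this fails at some $m$, there is a component $C\subset J_m(X_\Q)$ with $\dim C\geq(m+1)\dim X_\Q+1$; after spreading over $\Z[1/N]$, Lang--Weil applied to $C_{\F_p}$ gives $|J_m(X_{\F_p})(\F_p)|\geq(1+\delta)\,p^{(m+1)\dim X_\Q}$ for some $\delta>0$ and all large $p$. The hardest step, and the true obstacle, is to convert this excess on the jet scheme (which counts $X(\F_p[t]/(t^{m+1}))$) into a corresponding excess on the Greenberg scheme $\mathrm{Gr}^{m+1}(X)$ (whose $\F_p$-points give $|X(\Z/p^{m+1})|$): although these two counts differ a priori, a Hensel-type lifting at the generic point of $C$, together with the lci structure, should force their leading-order asymptotics to coincide, producing the required contradiction $\liminf_{p\to\infty}|X(\Z/p^{m+1})|/p^{(m+1)\dim X_\Q}>1$ to condition $(1)$.
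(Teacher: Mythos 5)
Your outline of the easy implications and your identification of Mustata's jet--scheme criterion as the bridge between point counts and rational singularities are both on target, but each of the two hard bridges has a genuine gap. For $(1)\Rightarrow(4)$: the claim that failure of irreducibility of some jet scheme produces a component of dimension $\geq (m+1)\dim X_{\mathbb{Q}}+1$ is false. For an lci variety every component of the jet scheme has dimension \emph{at least} the expected one, but a variety that is log canonical without being canonical (e.g.\ the cone $x^3+y^3+z^3=0$) has reducible jet schemes all of whose components have exactly the expected dimension; so the excess you need comes from the \emph{number} of top-dimensional geometric components, not from excess dimension, and to convert that into an excess of $\mathbb{F}_p$-points for infinitely many $p$ you must know those components are defined over $\mathbb{F}_p$ --- this is why the paper proves a Chebotarev-type statement (Proposition \ref{prop:Che}) before applying Lang--Weil. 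Moreover, the step you flag as the ``true obstacle'' really is one: the comparison of $|X(\mathbb{Z}/p^{m})|$ with $|X(\mathbb{F}_p[t]/t^{m})|$ cannot be forced by Hensel lifting at the generic point of a component (the excess components live over the singular locus, exactly where lifting fails, and $\mathbb{Z}/p^m$ and $\mathbb{F}_p[t]/t^m$ are non-isomorphic rings for $m\geq 2$). In the paper this comparison is Proposition \ref{prop:Denef.Loeser}, proved via the uniform quantifier-free integration theorem of Hrushovski--Kazhdan (Theorem \ref{thm:integration.q.f.}); some such motivic/model-theoretic input is unavoidable.

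For $(4)\Rightarrow(3)$ your mechanism is also not yet a proof. The ratio $|X(\mathbb{Z}/p^m)|/p^{m\dim X_{\mathbb{Q}}}$ for a \emph{fixed} $m$ is not the mass of the Serre--Oesterl\'e measure on $X(\mathbb{Z}_p)$ (only the $m\to\infty$ limit is); what one actually has is a tube integral in an ambient smooth scheme, as in Corollary \ref{cor:motivic.affine}. To decompose $h_X(\mathbb{Z}/p^m)-h_X(\mathbb{F}_p)$ over exceptional strata you would need a Denef-type explicit formula attached to an \emph{embedded} resolution (principalization) of the ideal of $X$, and the estimate $O(p^{-1})$ uniformly in $m$ then requires strict inequalities on the numerical data $(N_i,\nu_i)$ of divisors over the ambient space; Elkik's ``rational $=$ canonical for lci'' and nonnegativity of discrepancies of a resolution of $X_{\mathbb{Q}}$ itself is not the right numerical input and does not plug into such a formula. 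Translating rational singularities into the needed inequalities is essentially another application of Mustata-type results, and the uniformity in both $m$ and $p$ still has to be established. The paper's route is different: it first proves the fixed-$m$ estimate via jet schemes and Lang--Weil (Proposition \ref{prop:weak.rate}), then proves boundedness in $m$ using the pushforward theorem for measures with rationally singular fibers (Theorem \ref{thm:push.forward.detailed}, from \cite{AA}), and finally uses the uniform formula \eqref{eq:motivic.integration} to reduce all $m$ to finitely many residue classes; filling your sketch would require reconstructing comparable ingredients, so as it stands the implication is not established.
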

In fact, we prove this theorem also for other rings, see Theorem \ref{thm:size.X}.

We use Theorem \ref{thm:intro.count} to prove our second main result, counting representations of arithmetic groups of high rank (for example $SL_d(\Z)$ for $d \geq 3$):

\begin{introtheorem} \label{thm:intro.bnd} (see Theorem \ref{thm:introduction.bound}) There is a constant $C$ (for example, $C=747$ suffices) such that, if $G$ is an algebraic group scheme whose generic fiber $G_\mathbb{Q}$ is simple, connected, simply connected, and of $\mathbb{Q}$-rank at least two, then
\[
\left| \left\{ \parbox{2.5in}{Irreducible representations of $G(\mathbb{Z})$ of dimensions at most $n$} \right\} \right| = O \left( n^C \right),
\]
\end{introtheorem}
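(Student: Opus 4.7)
The plan is to analyze the representation zeta function $\zeta_\Gamma(s) := \sum_n r_n(\Gamma) n^{-s}$, prove its convergence on a half-plane $\Real(s) > C$, and extract $r_n(\Gamma) = O(n^{C+\varepsilon})$ by an elementary Dirichlet-series estimate. The first step is a reduction to an Euler product: by the congruence subgroup property (available because the $\mathbb{Q}$-rank is at least two) combined with Margulis super-rigidity, every irreducible representation of $\Gamma = G(\mathbb{Z})$ decomposes as an Archimedean piece twisted by a representation of a congruence quotient, yielding a factorization
\[
\zeta_\Gamma(s) = \zeta^{\infty}(s) \cdot \prod_p \zeta_{G(\mathbb{Z}_p)}(s).
\]
The Archimedean factor $\zeta^{\infty}$ is handled by classical (Harish-Chandra type) semisimple representation theory, so the main task is to bound the local factors $\zeta_{G(\mathbb{Z}_p)}(s)$ uniformly in $p$.

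To connect representation counts to point counts, I would invoke a Burnside/character-theoretic identity of the shape
\[
\sum_{\rho \in \Irr(H)} (\dim\rho)^{-s} = \frac{1}{|H|^{\alpha(s)}} \cdot \Big|\{(x_1,\ldots,x_k)\in H^k : w(x_1,\ldots,x_k)=1\}\Big|
\]
for a suitable word $w$ (most naturally $w=[x_1,x_2]$) and exponent $\alpha(s)$, applied to the finite quotients $H = G(\mathbb{Z}/p^m)$. This rewrites each local zeta function as a generating series for point counts on a scheme $\mathcal{M}$ --- essentially a commuting variety, or more generally a moment-map scheme for the adjoint action of $G$ on $\g$ --- defined over $\mathbb{Z}$, and produces an estimate of the form
\[
\zeta_{G(\mathbb{Z}_p)}(s) \leq \sum_m f(s,m) \cdot \frac{|\mathcal{M}(\mathbb{Z}/p^m)|}{p^{m\dim \mathcal{M}_\mathbb{Q}}}.
\]

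The decisive step is to verify that $\mathcal{M}_\mathbb{Q}$ satisfies the hypotheses of Theorem \ref{thm:intro.count}: reduced, absolutely irreducible, local complete intersection, and possessing rational singularities. For $G$ simple, simply connected, and of $\mathbb{Q}$-rank at least two, the first three should be tractable via standard deformation theory; the rational singularity condition is the substantive geometric input, and I expect this to be the main obstacle. Granted these, Theorem \ref{thm:intro.count} yields
\[
|\mathcal{M}(\mathbb{Z}/p^m)|/p^{m\dim \mathcal{M}_\mathbb{Q}} = 1 + O(p^{-1/2})
\]
uniformly in $p$ and $m$, which combined with the Archimedean bound produces absolute convergence of the Euler product on an explicit half-plane, hence $r_n(\Gamma) = O(n^C)$. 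Careful bookkeeping of $\dim \mathcal{M}$, the exponent $\alpha(s)$ from the Burnside-type formula, and the Archimedean contribution should then yield the explicit value $C = 747$.
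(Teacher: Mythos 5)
Your overall architecture does match the paper's: reduce via the congruence subgroup property and super-rigidity to an Euler product of local factors, convert each local factor into point counts on a word variety via a Frobenius-type identity, and feed the resulting scheme into Theorem \ref{thm:intro.count}. But two of your central steps have genuine gaps. First, the word $[x_1,x_2]$ and the commuting variety cannot work. Frobenius' formula (Theorem \ref{thm:Frobenius}) applied to $H=G(\mathbb{Z}/p^m)$ with the genus-$n$ word $[x_1,y_1]\cdots[x_n,y_n]$ computes $\zeta_H(s)$ only at the single point $s=2n-2$; with a single commutator that point is $s=0$, where the series merely counts all irreducible representations of $H$ and the local factors blow up as $m\to\infty$, so no polynomial bound can come out. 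You are forced to use the surface-group representation variety $\Def_{G,n}=\{[g_1,h_1]\cdots[g_n,h_n]=1\}$ with $n$ large, and the ``substantive geometric input'' you defer --- that $\Def_{G_{\mathbb{Q}},n}$ is an absolutely irreducible complete intersection with rational singularities --- is not something to be checked by standard deformation theory along the way: it is the main theorem of \cite{AA}, valid for $n\geq\mathcal{C}(G)$ with $\mathcal{C}(G)\leq 374$. Without that input and the explicit bound on $\mathcal{C}(G)$ you have neither the theorem nor the constant ($C=2\cdot 374-2=746$ for the abscissa, plus one when passing to partial sums, giving $747$).

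Second, your final convergence step fails as stated. The uniform estimate $|\mathcal{M}(\mathbb{Z}/p^m)|/p^{m\dim}=1+O(p^{-1/2})$ only gives $\zeta_{G(\mathbb{Z}_p)}(2n-2)=1+O(p^{-1/2})$, and $\prod_p\bigl(1+Cp^{-1/2}\bigr)$ diverges; moreover the Frobenius identity controls the local factor at the single exponent $2n-2$, not on a half-plane. The paper needs two further ingredients here: the sharper comparison of condition \eqref{item:intro.count.3} of Theorem \ref{thm:intro.count} together with the bound $\zeta_{G(\mathbb{F}_q)}(2n-2)-1=O(q^{-1})$ for the finite groups of Lie type from \cite{AKOV}, which combine to give $\zeta_{G(\mathbb{Z}_q)}(2n-2)-1=O(q^{-1})$ (Theorem \ref{thm:zeta.minus.1}); and the Lubotzky--Martin lower bound $|v|^{\delta}$ on the minimal dimension of a nontrivial representation of $G(O_v)$, which converts this into $\zeta_{G(O_v)}(2n-2+\epsilon)-1=O(|v|^{-1-\epsilon\delta})$ and makes the Euler product converge for every $s>2n-2$. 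Your proposal omits both, so even granting the geometry it does not yield a finite abscissa, let alone the explicit one. (Your archimedean/CSP reduction is essentially the paper's, which simply quotes Larsen--Lubotzky, Lubotzky--Martin and \cite{AKOV} for $\alpha(G(O_{k,T}))=\alpha(G(\widehat{O_{k,T}}))$.)
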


Theorem \ref{thm:intro.bnd} (and its generalization Theorem \ref{thm:introduction.bound} below) give an affirmative answer to a question of Larsen and Lubotzky (see \cite[\S11]{LL}).

\subsection{Counting points}

\subsubsection{Background and results}
Let $X$ be a scheme of finite type over $\mathbb{Z}$. We will be interested in the number of $\mathbb{Z} / N$-points of $X$, as a function of $N$. If $s\in \mathbb{C}$ has sufficiently large real part, then the series
\begin{equation} \label{eq:P.defn}
\mathcal{P}_X(s)=\sum_{N=1} ^{\infty} |X(\mathbb{Z} / N)| \cdot N^{-s}
\end{equation}
(where $|X(\mathbb{Z} /1)|=1$) converges absolutely and defines a holomorphic function. The domain of absolute convergence of the series \eqref{eq:P.defn} has the form $\left\{ s \mid \Re(s)> \alpha_X \right\}$. We call $\alpha_X$ the abscissa of convergence of $\mathcal{P}_X$. We have $$
\alpha_{X}=\limsup_{n \rightarrow \infty} \frac{\log \left(|X(\mathbb{Z} / 1)|+\cdots+|X(\mathbb{Z} / n)|\right)}{\log n}.$$

The Chinese Remainder Theorem implies a decomposition $\mathcal{P}_X(s)=\prod_p \mathcal{P}_{X,p}(s)$, where the product is over the set of prime numbers, and each local component is defined as
\[
\mathcal{P}_{X,p}(s)=\sum_{n=0}^{\infty} |X(\mathbb{Z} / p^n)| \cdot p^{-ns}.
\]

The functions $\mathcal{P}_{X,p}(s)$ were introduced by Borevich and Shafarevich, who conjectured that they are rational functions of $p^{-s}$. This conjecture was proved by Igusa (for hypersurfaces) and Denef (in general). For hypersurfaces, the function $\mathcal{P}_{X,p}$ is studied via the following variant of it, called the Igusa zeta function:
\[
\mathcal{Z}_{X,p}(s):=(1-p^s)\mathcal{P}_{X,p}(s+\dim X_\mathbb{Q} +1)+p^s.
\]
See \cite[\S1.3]{Mus_zeta} for a definition of $\mathcal{Z}_{X,p}(s)$ as a $p$-adic integral.

du Sautoy and Grunewald studied more general $p$-adic integrals and defined the following Euler product:
\[
\mathcal{Z}_X(s)=\prod_p \frac{\mathcal{Z}_{X,p}(s)}{\mathcal{Z}_{X,p}(\infty)}.
\]
In \cite{Mus_zeta}, Mustata asks for relations between the analytic properties of $\mathcal Z_X(s)$ and algebro-geometric properties of $X$. A partial answer is given by the following corollary of Theorem \ref{thm:intro.count}:

\begin{romantheorem}\label{thm:igu} (see \S\S\ref{ssec:pf.thm.igu}) Let $X$ be a scheme of finite type over $\mathbb{Z}$, whose generic fiber $X_\mathbb{Q}$ is reduced, absolutely irreducible, a local complete intersection, and has rational singularities. Then there is a finite set $S$ of primes such that \begin{enumerate}
\item \label{item:P.p.pole} For every $p\notin S$, either $\mathcal{P}_{X,p}(s)=1$, or the abscissa of convergence of $\mathcal{P}_{X,p}(s)$ is $\dim X_\mathbb{Q}$ and $\mathcal{P}_{X,p}(s)$ has a simple pole at $\dim X_\mathbb{Q}$.
\item \label{item:P.zeta} Let $Y=X \times_{\Spec \mathbb{Z}} \Spec S ^{-1} \mathbb{Z}$. The abscissa of convergence of $\mathcal{P}_{Y}(s)$ is $\dim X_\mathbb{Q}+1$. Moreover, if $\zeta(s)$ denotes the Riemann zeta function, then the function $\mathcal{P}_{Y}(s)/ \zeta(s-\dim X_\mathbb{Q})$ can be analytically continued to $\left\{ s \mid \Re(s)>\dim X_\mathbb{Q} +1/2 \right\}$. In particular, $\mathcal{P}_Y(s)$ has meromorphic continuation to $\Re(s)>\dim X_\mathbb{Q} +1/2$, and the only pole of the continued function on the line $\Re(s)=\dim X_\mathbb{Q} +1$ is a simple pole at $\dim X_\mathbb{Q} +1$.
\item\label{it:3} Denote $H_Y(n):=n^{-\dim Y_Q}|Y(\mathbb{Z} / n)|$. The Cezaro mean
\[
\lim_{N \rightarrow \infty} \frac{H_{Y}(1)+\cdots +H_{Y}(N)}{N}
\]
exists and is a positive real number.
\item \label{item:Z.zeta} If $X$ is a hypersurface, then the abscissa of convergence of $\mathcal{Z}_{Y}(s)$ is $0$ and the function $\mathcal{Z}_Y(s)/ \zeta(s+1)$ can be analytically continued to $\left\{ s \mid \Re(s)>-1/2 \right\}$. In particular, $\mathcal{Z}_Y(s)$ has meromorphic continuation to $\Re(s)>-1/2$ with a simple pole at $0$.
\end{enumerate}
\end{romantheorem}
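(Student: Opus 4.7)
The plan is to reduce all four assertions to manipulations of Euler products, using only the quantitative equivalences in Theorem~\ref{thm:intro.count}. Set $d := \dim X_\mathbb{Q}$, $b_{m,p} := |X(\mathbb{Z}/p^m)|/p^{md}$, and $a_p := b_{1,p}$. Since $X_\mathbb{Q}$ has rational singularities, parts (2) and (3) of Theorem~\ref{thm:intro.count} furnish a finite set $S$ of primes and a constant $C$ with
\[
|a_p - 1| < Cp^{-1/2} \qquad \text{and} \qquad |b_{m,p} - a_p| < Cp^{-1}
\]
for every $p \notin S$ and every $m \geq 1$. These two estimates will be the only external inputs.

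For part (\ref{item:P.p.pole}), I will combine the uniform bound $b_{m,p} = O(1)$ with Denef's rationality of $\mathcal{P}_{X,p}$ in $p^{-s}$: the crude majorization $\mathcal{P}_{X,p}(\sigma) = O((\sigma - d)^{-1})$ as $\sigma \to d^+$ caps the order of any pole at $s = d$ at one, while the telescoping identity
\[
(1 - p^{d-s})\mathcal{P}_{X,p}(s) = 1 + (a_p - 1)p^{d-s} + \sum_{m \geq 2}(b_{m,p} - b_{m-1,p})\,p^{m(d-s)}
\]
identifies its value at $s = d$ as $a_p$. If $a_p = 0$, the reduction maps $X(\mathbb{Z}/p^m) \to X(\mathbb{F}_p)$ force $\mathcal{P}_{X,p} \equiv 1$; otherwise $\mathcal{P}_{X,p}$ has a simple pole at $s = d$, and Landau's theorem on non-negative Dirichlet series pins the abscissa there. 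For part (\ref{item:P.zeta}), I will peel off the $\zeta(s-d)$-factor: setting $F_p(s) := (1 - p^{d-s})\mathcal{P}_{X,p}(s)$, one has $\mathcal{P}_Y(s)/\zeta(s-d) = \prod_{p \in S}(1 - p^{d-s}) \cdot \prod_{p \notin S} F_p(s)$. The two bounds give $|F_p(s) - 1| = O(p^{d - \sigma - 1/2}) + O(p^{2(d-\sigma) - 1})$ with $\sigma := \Re(s)$, both summable over $p$ once $\sigma > d + 1/2$ and uniformly on compact subsets. This yields the claimed holomorphic continuation of $\mathcal{P}_Y(s)/\zeta(s-d)$; the non-vanishing of $\zeta$ on $\Re = 1$ then leaves $s = d + 1$ as the sole, simple pole of $\mathcal{P}_Y$ on that vertical line, and Landau again fixes the abscissa at $d + 1$.

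Part (\ref{it:3}) will follow from the Wiener--Ikehara Tauberian theorem applied to the shifted series $\sum H_Y(n)\,n^{-s} = \mathcal{P}_Y(s + d)$: the residue at $s = 1$ equals $R = \prod_{p \in S}(1 - p^{-1})\,\prod_{p \notin S} F_p(d+1)$, a convergent product of strictly positive factors (each $\mathcal{P}_{X,p}(d+1)$ being a non-negative Dirichlet series with constant term $1$), whence $R > 0$. For part (\ref{item:Z.zeta}), the same bookkeeping applies after substituting the defining identity $\mathcal{Z}_{X,p}(s) = (1 - p^s)\mathcal{P}_{X,p}(s + d + 1) + p^s$: a direct calculation gives $\mathcal{Z}_{X,p}(\infty) = 1 - a_p/p$, and expanding $\mathcal{Z}_{X,p}(s)/\mathcal{Z}_{X,p}(\infty)$ in powers of $p^{-s-1}$ produces a main correction of size $O(p^{-\sigma - 3/2})$ and a higher-order tail of size $O(p^{-2\sigma - 3})$, both summable over $p$ for $\sigma > -1/2$; comparison with $\zeta(s + 1) = \prod_p (1 - p^{-s-1})^{-1}$ then extends $\mathcal{Z}_Y(s)/\zeta(s + 1)$ holomorphically to that half-plane. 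The main technical point throughout is arranging each Euler factor so that the two estimates push it past its natural boundary by exactly $1/2$: the ``square-root cancellation'' $|a_p - 1| = O(p^{-1/2})$ of Theorem~\ref{thm:intro.count}(2) is precisely what buys this half, while the sharper $m$-stability $|b_{m,p} - a_p| = O(p^{-1})$ handles the higher-order corrections essentially for free.
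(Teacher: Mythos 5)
Your proposal follows, in substance, the same route as the paper: both arguments feed the two estimates of Theorem \ref{thm:intro.count} (namely $|a_p-1|<Cp^{-1/2}$ and $|b_{m,p}-a_p|<Cp^{-1}$ off a finite set $S$) into Euler--product bookkeeping against $\zeta(s-\dim X_\mathbb{Q})$, and then apply a Tauberian theorem for the Ces\`aro mean. The paper packages the analytic estimates into Lemma \ref{lem:Dirichlet} and quotes the Tauberian theorem of du Sautoy--Grunewald \cite[Theorem 4.20]{dSG}, whereas you estimate the individual factors by hand (your bounds $|F_p(s)-1|=O(p^{d-\sigma-1/2})+O(p^{2(d-\sigma)-1})$, and the analogous bounds for the normalized Igusa factors with $\mathcal{Z}_{X,p}(\infty)=1-a_p/p$, are exactly what Lemma \ref{lem:Dirichlet}(3) encodes) and use Wiener--Ikehara for claim (\ref{it:3}); these are interchangeable. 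A cosmetic point in part (\ref{item:P.zeta}): the appeal to non-vanishing of $\zeta$ on $\Re(s)=1$ is not what is needed; what matters is that $\mathcal{P}_Y(s)/\zeta(s-d)$ is holomorphic on $\Re(s)>d+\tfrac12$ and nonzero at $s=d+1$, which you do establish via the positivity of the factors.

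One step in part (\ref{item:P.p.pole}) is wrong as stated, though easily repaired. The value of $(1-p^{d-s})\mathcal{P}_{X,p}(s)$ at $s=d$ is \emph{not} $a_p$: the tail $\sum_{m\geq 2}(b_{m,p}-b_{m-1,p})p^{m(d-s)}$ does not tend to $0$ as $s\to d^{+}$. Its partial sums telescope to $b_{M,p}-a_p$, so (by Abel summation) the limiting value of the whole expression is the Abel limit of the sequence $b_{m,p}$, which is only within $O(p^{-1})$ of $a_p$ and need not equal it, since $b_{m,p}$ need not converge to $a_p$. Consequently the dichotomy ``$a_p=0$ forces $\mathcal{P}_{X,p}\equiv 1$, otherwise there is a simple pole at $d$'' is not justified for the finitely many primes $p\notin S$ with $Cp^{-1/2}\geq 1$: there $a_p$ can be positive yet smaller than the $O(p^{-1})$ error (indeed $X(\mathbb{Z}/p^2)$ could even be empty, making $\mathcal{P}_{X,p}$ a polynomial with no pole at all). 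The repair is the one the paper makes implicitly through the hypotheses of Lemma \ref{lem:Dirichlet}(1): enlarge $S$ to contain all primes with, say, $Cp^{-1/2}\geq \tfrac13$; then $b_{m,p}\in(\tfrac23,\tfrac43)$ for all $m$ and all $p\notin S$, so the value at $s=d$ is bounded below by a positive constant, the pole at $s=d$ is genuinely simple, the abscissa equals $d$, and the ``$\mathcal{P}_{X,p}=1$'' branch simply never occurs for $p\notin S$. Since the theorem only asserts the existence of some finite $S$, this enlargement is legitimate, and with it the rest of your argument goes through.
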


\begin{remark} \label{rem:lci.implies.S.empty}
$ $
 \begin{itemize}
\item If $X$ itself is a local complete intersection, then we can take $S$ to be empty (so $Y=X$); see \S\S\ref{ssec:pf.thm.igu}.
\item Contrary to \eqref{it:3}, the sequence $H_{Y}(N)$ might not converge. This happens even for elliptic curves.
\end{itemize}
\end{remark}

\subsubsection{Sketch of the proof of Theorem \ref{thm:intro.count}}


We sketch the proof of the implication $\eqref{item:intro.count.4} \implies \eqref{item:intro.count.3}$, since it contains most of the ideas in the proof. We first show that $|X(\mathbb{Z} / p^m)|=|X(\mathbb{F}_p[t]/t^m)|$, for all but finitely many primes $p$. Next, we consider the function $F(p,m):=H_{X}(p^{m})=\frac{|X(\mathbb{Z} / p^m)|}{p^{m \cdot \dim X}}$. We show the following: \begin{enumerate}[(a)]
\item \label{item:F.bdd} For every $p$, the function $m \mapsto F(p,m)$ is bounded. This follows from \cite{AA}.
\item \label{item:estimate.bdd.m} Condition \eqref{item:intro.count.3} holds for bounded $m$. To show this, we analyze the jet schemes $\Jet_nX$, which are schemes that satisfy $\Jet_nX(\mathbb{F}_p)\cong X(\mathbb{F}_p[t]/t^n)$. We apply a theorem of Mustata claiming that, under our assumption, $\Jet_nX$ are irreducible, and the Lang--Weil bounds.
\item \label{item:F.formula} There is a formula for $F(p,m)$ involving simple expressions in $m$ and $p$, as well as the number of $\mathbb{F}_p$-points on finitely many varieties. This is a consequence of motivic integration.
\end{enumerate}
We deduce from \eqref{item:F.formula} and \eqref{item:F.bdd} (and the Lang--Weil bounds) that there are finitely many $m_1,\ldots,m_k$ such that, for all $m$, there is $i$ such that $|F(m,p)-F(m_i,p)|=O(p ^{-1})$. Applying \eqref{item:estimate.bdd.m}, we get the result.

\subsection{Counting representations}

For a topological group $\Gamma$, let $r_n(\Gamma)$ be the number of {isomorphism classes of irreducible, $n$-dimensional, complex, continuous representations of $\Gamma$}. The sequence $r_n(\Gamma)$ is called the representation growth sequence of $\Gamma$. In general, $r_n(\Gamma)$ can be infinite, but we will only consider groups for which $r_n(\Gamma)$ is finite for any $n$.

The study of the representation growth sequence was introduced at \cite{Jai} and \cite{LM}. For more recent results, we refer the reader to \cite{AA}, \cite{A}, \cite{AKOV}, \cite{LL}, \cite{LS}, and the references therein. 

Throughout the paper, fix an affine group scheme $G$ over $\mathbb{Z}$ whose generic fiber $G_\mathbb{Q}$ is $\mathbb{Q}$-simple, connected, and simply connected. If $k$ is a global field and $T$ is a finite set of places of $k$ containing all archimedean ones, we denote by $O_{k,T}$ the ring $T$-integers:
\[
O_{k,T}= \left\{ x\in k \mid \forall v\notin T \text{ we have } \| x\|_v \leq 1\right\} .
\]

We will study the representation growth of the group  $\Gamma=G(O_{k,T})$.  The main theorem of \cite{LM} implies that, if $\Gamma$ satisfies the congruence subgroup property (See Definition \ref{defn:CSP} below), then the sequence $r_n(\Gamma)$ is bounded by some polynomial in $n$. Examples of groups with the  congruence subgroup property are $G(O_{k,T})$ assuming the $\mathbb{Q}$-rank of $G_\mathbb{Q}$ is greater than one. In order to catch the rate of polynomial growth of $r_n(\Gamma)$, we use the following definition:

\begin{definition*} Let $\Gamma$ be a topological group, and assume that the sequence $r_n(\Gamma)$ is bounded by a polynomial. The representation zeta function of $\Gamma$ is the Dirichlet series
\begin{equation} \label{eq:defn.zeta}
\zeta_\Gamma(s)=\sum_{n=1}^ \infty r_n(\Gamma)n^{-s}.
\end{equation}
Denote the abscissa of convergence of $\zeta_\Gamma$ by $\alpha(\Gamma)$.
\end{definition*}

As before, we have:
\begin{equation} \label{eq:alpha.limsup}
\alpha(\Gamma)=\limsup_{n \rightarrow \infty} \frac{\log \left( r_1(\Gamma)+\cdots+r_n(\Gamma)\right)}{\log n}.
\end{equation}
If $\Gamma=G(O_{k,T})$ as above, the $\limsup$ in \eqref{eq:alpha.limsup} is actually a limit and is a rational number (see \cite{A}).

The abscissa $\alpha(\Gamma)$ is related to the singularities of the varieties parameterizing homomorphisms from surface groups to $G$:
\begin{definition*} Let $n \in \mathbb{Z}_{\geq 1}$ and let $\Sigma_n$ be the closed surface of genus $n$. The deformation variety of $\Sigma_n$ in $G$ is defined to be the variety
\[
\Def_{G,n}=\Hom(\pi_1(\Sigma_n),G)=\left\{ (g_1,h_1,\ldots,g_n,h_n)\in G \mid [g_1,h_1]\cdots[g_n,h_n]=1\right\}.
\]
\end{definition*}

In \cite[Theorem VIII]{AA}, we showed that $\Def_{G_\mathbb{Q},n}$ has rational singularities if $n \geq \mathcal{C}(G)$, where
\[
\mathcal{C}(G)=\left\{\begin{matrix} 12 & G \textrm{ is of type } A,B,D \\ 21 & G \textrm{ is of type } C  \\ \lceil \frac{3}{2}(\dim(\mathfrak{g})+1) \rceil&\quad \textrm{$G$ is exceptional and  $\mathfrak{g}$  is a simple factor  of $\Lie(G_{\overline{\mathbb{Q}}})$} \end{matrix} \right. .
\]

Note that ${\mathcal{C}}(G)$ is bounded by $374$ for any $G$.

%
%
%

The following is a generalization of Theorem \ref{thm:intro.bnd}:

\begin{romantheorem}[See \S\S\ref{ssec:pf.bound}] \label{thm:introduction.bound}  There is a finite set $S$ of prime numbers such that, for every global field  $k$ of characteristic not in $S$ and every finite set $T$  of places of $k$ containing all archimedean ones, if $G(O_{k,T})$ satisfies the congruence subgroup property and $\Def_{G_\mathbb{Q},n}$ has rational singularities, then $\alpha(G(O_{k,T})) \leq 2n-2$.
\end{romantheorem}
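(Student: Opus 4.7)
The plan is to combine the congruence subgroup property, the Frobenius--Mednykh character formula, and Theorem A applied to the deformation variety $\Def_{G,n}$, thereby reducing the global bound on $\alpha(\Gamma)$ to local computations at non-archimedean places.

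By the congruence subgroup property, every continuous finite-dimensional irreducible representation of $\Gamma := G(O_{k,T})$ factors through some finite congruence quotient $G(O_{k,T}/I)$; by the Chinese remainder theorem, such a quotient splits as $\prod_v G(\mathcal{O}_v/\pi_v^{m_v})$, and its irreducible representations are external tensor products. Following a Larsen--Lubotzky-style framework (see \cite{LL,A}), the bound $\alpha(\Gamma) \leq 2n-2$ will follow from local bounds $\alpha(G(\mathcal{O}_v)) \leq 2n-2$ at each non-archimedean $v$, together with control of the archimedean contribution. In the regime where $\Def_{G_\mathbb{Q},n}$ has rational singularities (so $n$ is large), the archimedean factor is easily bounded by $2n-2$ via Witten's zeta function.

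To obtain the local bound at $v$, apply the Frobenius--Mednykh identity: for any finite group $H$,
\[
\sum_{\rho \in \Irr(H)} \dim(\rho)^{-(2n-2)} = \frac{|\Hom(\pi_1(\Sigma_n), H)|}{|H|^{2n-1}}.
\]
With $H = G(\mathcal{O}_v/\pi_v^m)$, the right-hand side equals $|\Def_{G,n}(\mathcal{O}_v/\pi_v^m)|/|G(\mathcal{O}_v/\pi_v^m)|^{2n-1}$. Both integral schemes satisfy the hypotheses of Theorem \ref{thm:size.X}: $G$ is smooth, and $\Def_{G,n}$ is the fiber over $1$ of the flat word map $G^{2n}\to G$ of relative dimension $(2n-1)\dim G$, so it is a local complete intersection; its generic fiber is reduced and absolutely irreducible for $n \geq 2$, and has rational singularities by hypothesis. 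Theorem \ref{thm:size.X} then yields the uniform-in-$m$ estimates
\[
|\Def_{G,n}(\mathcal{O}_v/\pi_v^m)| = q_v^{m(2n-1)\dim G}(1+O(q_v^{-1/2})), \quad |G(\mathcal{O}_v/\pi_v^m)| = q_v^{m\dim G}(1+O(q_v^{-1/2})),
\]
so the Frobenius ratio equals $1+O(q_v^{-1/2})$ uniformly in $m$. Passing to the monotone limit $m\to\infty$ gives $\zeta_{G(\mathcal{O}_v)}(2n-2) < \infty$; since $\dim(\rho)^{-s}$ is non-increasing in $s$, it follows that $\alpha(G(\mathcal{O}_v)) \leq 2n-2$.

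The main obstacle is the passage from local to global. The profinite completion $\hat\Gamma$ has $\zeta_{\hat\Gamma}(s) = \prod_v \zeta_{G(\mathcal{O}_v)}(s)$, and this Euler product may diverge at $s=2n-2$ even though each factor is finite. To circumvent this, for $s$ strictly greater than $2n-2$ one combines the uniform estimate $\zeta_{G(\mathcal{O}_v)}(2n-2)-1 = O(q_v^{-1/2})$ with Landazuri--Seitz-type lower bounds on the minimal dimensions of nontrivial representations of $G(\mathbb{F}_{q_v})$ to extract a sharper decay in $q_v$ that is summable over $v$. A secondary subtlety, which will likely require enlarging the exceptional set $S$, is verifying the LCI and absolute-irreducibility hypotheses of Theorem \ref{thm:size.X} integrally over $\mathcal{O}_{k,T}$ rather than just on the generic fiber.
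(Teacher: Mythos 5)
Your overall architecture — reduce via CSP to the (adelic) congruence completion, apply the Frobenius formula to $G(\mathcal{O}_v/\pi_v^m)$, control $|\Def_{G,n}(\mathcal{O}_v/\pi_v^m)|$ by Theorem \ref{thm:size.X}, and then run an Euler product argument — is the same as the paper's. The gap is quantitative, and it sits exactly where you flag "the main obstacle." From Theorem \ref{thm:size.X} you only extract item \eqref{item:rate}, i.e.\ $h_{\Def_{G,n}}(\mathcal{O}_v/\pi_v^m)=1+O(q_v^{-1/2})$, so after the limit $m\to\infty$ your local input is $\zeta_{G(\mathcal{O}_v)}(2n-2)-1=O(q_v^{-1/2})$. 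Feeding in a minimal-dimension lower bound $\geq q_v^{\delta}$ (as in \cite[Proposition 4.6]{LM}, or Landazuri--Seitz for the residue group) gives at best $\zeta_{G(\mathcal{O}_v)}(2n-2+\epsilon)-1=O(q_v^{-1/2-\epsilon\delta})$, and $\sum_v q_v^{-1/2-\epsilon\delta}$ diverges whenever $\tfrac12+\epsilon\delta\leq 1$ (a positive density of rational primes split completely, so the sum dominates a sum of $p^{-1/2-\epsilon\delta}$ over those $p$; the function-field count of places of degree $d$ gives the same conclusion). Hence your Euler product converges only for $\epsilon>1/(2\delta)$ and you prove $\alpha\leq 2n-2+1/(2\delta)$, not $\alpha\leq 2n-2$. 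Landazuri--Seitz bounds cannot repair this, because the shortfall is not in the level-one representations: it is in the higher-level ones, for which your only control is the $O(q_v^{-1/2})$ bound coming from Lang--Weil. The paper avoids the $q^{-1/2}$ loss by using the sharper item \eqref{item:rate.precise} of Theorem \ref{thm:size.X}, which compares level $m$ to level $1$ with error $O(q^{-1})$: via Frobenius this gives $\zeta_{G(\mathcal{O}_v/\pi_v^m)}(2n-2)-\zeta_{G(\mathbb{F}_{q_v})}(2n-2)=O(q_v^{-1})$ uniformly in $m$, and it is combined with \cite[Theorem 3.1]{AKOV} (this is where the finite-group-of-Lie-type input actually enters) giving $\zeta_{G(\mathbb{F}_{q_v})}(2n-2)-1=O(q_v^{-1})$, to obtain Theorem \ref{thm:zeta.minus.1}: $\zeta_{G(\mathcal{O}_v)}(2n-2)-1=O(q_v^{-1})$. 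Only with exponent $1$ does the extra factor $q_v^{-\epsilon\delta}$ make $\prod_v\bigl(1+O(q_v^{-1-\epsilon\delta})\bigr)$ converge, by comparison with the Dedekind zeta function at $1+\epsilon\delta$, for every $\epsilon>0$.

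Two secondary points. First, your opening claim that CSP forces every finite-dimensional irreducible representation of $G(O_{k,T})$ to factor through a congruence quotient is false in characteristic zero (restrictions of rational representations of $G$ do not so factor, and CSP only gives a finite, not trivial, congruence kernel); the paper reduces $\alpha(G(O_{k,T}))=\alpha\bigl(G(\widehat{O_{k,T}})\bigr)$ by citing \cite[Theorem 1.4]{AKOV} in characteristic zero and \cite[Theorem 3.3]{LL} together with \cite[Lemma 2.2]{LM} in positive characteristic, so your Witten-zeta remark needs to be replaced by (or expanded into) that argument. Second, Theorem \ref{thm:size.X} applies to the rings $\mathbb{Z}_q/\mathfrak{m}_q^m$ and $\mathbb{F}_q[t]/t^m$, hence only to places unramified over $\mathbb{Q}$; the finitely many ramified places depend on $k$ while $S$ may not, so their local factors must be handled separately (the paper invokes \cite[Theorem IV]{AA} to know $\zeta_{G(\mathcal{O}_v)}(2n-2)$ converges at every place). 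Both of these are fixable; the exponent loss in the first paragraph is the real obstruction to your argument as written.
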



 As a result, we get the following dichotomy for the representation growth of an arithmetic group $\Gamma$ in characteristic zero: either $r_n(\Gamma)=o(n^{747})$, or $r_n(\Gamma)$ grows super-polynomially in $n$ (this happens if $\Gamma$ does not satisfy the Congruence Subgroup Property---see \cite{LM}).

We will deduce Theorem \ref{thm:introduction.bound} from the following adelic version, which is applicable also for low rank groups. In the following, if $A$ is a ring, we denote its pro-finite completion by $\widehat{A}$.

\begin{romantheorem}[See \S\S\ref{ssec:pf.main}] \label{thm:introduction.main} There is a finite set $S$ such that, for any global field $k$ of characteristic not in $S$, any finite set $T$ of places of $k$ containing all archimedean places, and any natural number $n$, if $\Def_{G_\mathbb{Q},n}$ has rational singularities, then $\alpha \left( G(\widehat{O_{k,T}})\right) \leq 2n-2$.
\end{romantheorem}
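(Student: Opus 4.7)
The plan is to convert the question about representation growth of $\Gamma=G(\widehat{O_{k,T}})$ into a question about the point-counts of the deformation variety $\Def_{G,n}$, and then invoke Theorem~\ref{thm:intro.count}. Since $\Gamma=\prod_{v\notin T}G(O_v)$ is a product of compact $p$-adic analytic groups, its continuous irreducible representations are external tensor products of irreducibles of the factors, all but finitely many trivial, so
\[
\zeta_\Gamma(s)=\prod_{v\notin T}\zeta_{G(O_v)}(s).
\]
Every continuous irreducible of $G(O_v)$ factors through some congruence quotient $G(O_v/\mathfrak{p}_v^m)$; applying Frobenius' formula to the finite quotient and letting $m\to\infty$ by monotone convergence yields
\[
\zeta_{G(O_v)}(2n-2)=\lim_{m\to\infty}\frac{|\Def_{G,n}(O_v/\mathfrak{p}_v^m)|}{|G(O_v/\mathfrak{p}_v^m)|^{2n-1}}.
\]

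I next apply Theorem~\ref{thm:intro.count}, in the form for local rings beyond $\mathbb{Z}/p^m\mathbb{Z}$ stated in Theorem~\ref{thm:size.X}, to the scheme $X=\Def_{G,n}$: the generic fibre has rational singularities by hypothesis, and is a local complete intersection of the expected dimension $(2n-1)\dim G$ coming from its description as the fibre over the identity of the product-of-commutators map $G^{2n}\to G$. This produces a finite set $S$ of primes, independent of $k$, such that for $v$ of residue characteristic outside $S$ condition~\eqref{item:intro.count.3} gives
\[
\frac{|\Def_{G,n}(O_v/\mathfrak{p}_v^m)|}{q_v^{m(2n-1)\dim G}}=\frac{|\Def_{G,n}(\mathbb{F}_v)|}{q_v^{(2n-1)\dim G}}+O(q_v^{-1})
\]
uniformly in $m$. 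After enlarging $S$ so that $G$ is smooth with connected special fibre over $O_v$, using $|G(O_v/\mathfrak{p}_v^m)|=q_v^{(m-1)\dim G}|G(\mathbb{F}_v)|$ together with Frobenius' formula for the finite group $G(\mathbb{F}_v)$ then gives
\[
\zeta_{G(O_v)}(2n-2)=\zeta_{G(\mathbb{F}_v)}(2n-2)+O(q_v^{-1}).
\]

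Finally, for $v$ outside a further finite set, character-degree bounds for finite quasi-simple groups of Lie type (Liebeck--Shalev-type estimates) give $\zeta_{G(\mathbb{F}_v)}(2n-2)-1=O(q_v^{-1-\delta})$ for some $\delta>0$, so $\zeta_{G(O_v)}(2n-2)-1=O(q_v^{-1-\delta'})$, which is summable over $v$. Hence the Euler product for $\zeta_\Gamma(2n-2)$ converges, and since $\dim(\rho)\ge 1$ the trivial monotonicity bound $\zeta_\Gamma(s)\le\zeta_\Gamma(2n-2)<\infty$ for all $s\ge 2n-2$ proves $\alpha(\Gamma)\le 2n-2$. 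The main obstacle is this last step: Theorem~\ref{thm:intro.count} alone delivers error $O(q_v^{-1})$, which sits right at the boundary of summability over places of $k$, and the extra $q_v^{-\delta}$ factor needed for convergence of the Euler product requires a separate character-theoretic input on finite groups of Lie type that exploits the largeness of $n$ forced by the rational singularities hypothesis.
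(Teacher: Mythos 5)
Your reduction to point counts is the right start, and the estimate $\zeta_{G(O_v)}(2n-2)=\zeta_{G(\mathbb{F}_v)}(2n-2)+O(q_v^{-1})$ is essentially the paper's Theorem \ref{thm:zeta.minus.1}. The genuine gap is in the last step. You split
$\zeta_{G(O_v)}(2n-2)-1=\bigl(\zeta_{G(O_v)}(2n-2)-\zeta_{G(\mathbb{F}_v)}(2n-2)\bigr)+\bigl(\zeta_{G(\mathbb{F}_v)}(2n-2)-1\bigr)$
and propose to gain summability by improving the second bracket to $O(q_v^{-1-\delta})$ via character-degree bounds for $G(\mathbb{F}_v)$. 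But the first bracket --- the contribution of representations of positive congruence level, which is exactly what the point-counting theorem controls --- is only $O(q_v^{-1})$, and no refinement of the finite-field term can reduce the sum below $O(q_v^{-1})$. (Moreover, the claimed bound $\zeta_{G(\mathbb{F}_v)}(2n-2)-1=O(q_v^{-1-\delta})$ is itself not available in general: for a rank-one group at $s=2$ one has $\sim q$ characters of degree $\sim q$, so this term is genuinely of order $q^{-1}$.) Thus your argument only yields $\zeta_{G(O_v)}(2n-2)-1=O(q_v^{-1})$, which sits exactly at the boundary of summability over the places of $k$, and the convergence of the Euler product at $s=2n-2$ --- which your final monotonicity step relies on --- is not established. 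Note also that the theorem does not require convergence at $2n-2$; it only asserts $\alpha\le 2n-2$.

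The missing idea, and the paper's resolution, is to prove convergence of $\prod_{v}\zeta_{G(O_v)}(s)$ at $s=2n-2+\epsilon$ for every $\epsilon>0$, which suffices for the abscissa bound, and to extract the needed extra decay not from finite-field character theory but from the local groups themselves: by \cite[Proposition 4.6]{LM} there is $\delta>0$ such that, for almost all $v$, every non-trivial continuous irreducible representation of $G(O_v)$ has dimension at least $|v|^{\delta}$, whence
\[
\zeta_{G(O_v)}(2n-2+\epsilon)-1\ \le\ |v|^{-\epsilon\delta}\bigl(\zeta_{G(O_v)}(2n-2)-1\bigr)\ \le\ D\,|v|^{-1-\epsilon\delta},
\]
which is summable (comparison with the Dedekind zeta function of $k$ at $1+\epsilon\delta$). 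The finitely many remaining local factors are handled by \cite[Theorem IV]{AA}, which guarantees $\zeta_{G(O_v)}(2n-2)<\infty$ for every $v$ under the rational-singularities hypothesis; you would also need some such input for the bad places, which your proposal does not address. With these two ingredients your outline becomes the paper's proof; without them the Euler-product step fails.
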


From Theorem \ref{thm:introduction.main} and Theorem \ref{thm:Frobenius} below, we deduce the following statement about finite groups like $G(\mathbb{Z}/N \mathbb{Z})$, generalizing \cite[Corollary XI]{AA}:

\begin{romancorollary} \label{cor:prob.fin} There is a finite set $S$ of primes such that, for any ring of integers $O_{k,T}$ of any global field $k$ of characteristic not in $S$, there is a constant $C$ such that the following holds. For any non-trivial ideal $I$ of $O_{k,T}$, any natural number $n \geq 3$ such that $\Def_{G_\mathbb{Q},n-1}$ has rational singularities, and any $g\in G(O_{k,T}/I)$,
\[
\mathrm{Prob}\left( [g_1,h_1] \cdots [g_{n},h_{n}]=g\right) < \frac{C}{|G(O_{k,T}/I)|},
\]
where $g_1,h_1,\ldots,g_n,h_n$ are uniformly distributed random elements of $G(O_{k,T}/I)$.
\end{romancorollary}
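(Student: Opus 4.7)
\textbf{Proof plan for Corollary \ref{cor:prob.fin}.} My plan is to apply the classical Frobenius formula, which for a finite group $H$ and an element $g\in H$ gives
\[
\#\left\{(g_i,h_i)_{i=1}^n \in H^{2n} : \prod_{i=1}^n[g_i,h_i]=g\right\} = |H|^{2n-1}\sum_{\chi \in \Irr(H)} \frac{\chi(g)}{\chi(1)^{2n-1}}.
\]
Dividing by $|H|^{2n}$ and applying the crude bound $|\chi(g)|\leq \chi(1)$ yields
\[
\mathrm{Prob}\left([g_1,h_1]\cdots[g_n,h_n]=g\right) \leq \frac{\zeta_H(2n-2)}{|H|},
\]
where $\zeta_H(s)=\sum_{\chi}\chi(1)^{-s}$ is the representation zeta function of $H:=G(O_{k,T}/I)$. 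The corollary therefore reduces to a uniform upper bound on $\zeta_H(2n-2)$ as $I$ and $n$ vary.

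After enlarging $S$ so that $G$ is smooth over $\Spec(O_{k,T})$, the reduction map $G(\widehat{O_{k,T}}) \twoheadrightarrow G(O_{k,T}/I)$ is surjective for every ideal $I$, and consequently every irreducible representation of $G(O_{k,T}/I)$ inflates to a continuous irreducible representation of $G(\widehat{O_{k,T}})$. Comparing Dirichlet series term by term, this gives the inequality
\[
\zeta_{G(O_{k,T}/I)}(s) \leq \zeta_{G(\widehat{O_{k,T}})}(s) \qquad (s>0),
\]
uniformly in $I$, and thereby reduces the problem to the profinite group.

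Next I would invoke Theorem \ref{thm:introduction.main} with its parameter equal to $n-1$: the hypothesis that $\Def_{G_\mathbb{Q},n-1}$ has rational singularities is exactly what the corollary assumes, and the conclusion yields $\alpha(G(\widehat{O_{k,T}}))\leq 2(n-1)-2 = 2n-4 < 2n-2$, so $\zeta_{G(\widehat{O_{k,T}})}(2n-2)<\infty$. To obtain a bound independent of $n$ as well, I would fix one admissible value $n_0$ (for instance $n_0=\mathcal{C}(G)+1$, which works by the bound on $\Def_{G_\mathbb{Q},n}$ quoted in the introduction). Since $\zeta_{G(\widehat{O_{k,T}})}$ is a Dirichlet series with nonnegative coefficients, it is monotonically decreasing on its domain of convergence, and hence $\zeta_{G(\widehat{O_{k,T}})}(2n-2)\leq \zeta_{G(\widehat{O_{k,T}})}(2n_0-2) =: C$ for every admissible $n\geq n_0$.

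All the substantive work---the bound on the abscissa---has been carried out in Theorem \ref{thm:introduction.main}; the main subtlety remaining in the corollary is the uniformity of $C$ in the three parameters $g$, $I$, and $n$, which is handled respectively by the character bound $|\chi(g)|\leq \chi(1)$, the inflation argument, and the monotonicity of the representation zeta function past its abscissa of convergence.
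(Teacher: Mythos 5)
Your deduction is essentially the one the paper intends: the paper offers no proof beyond the remark that the corollary follows from Theorem \ref{thm:introduction.main} and Theorem \ref{thm:Frobenius}, and your chain --- the Frobenius character formula for a general target element $g$ (note the paper states Theorem \ref{thm:Frobenius} only for $g=1$; you need the classical $\sum_\chi \chi(g)/\chi(1)^{2n-1}$ version), the bound $|\chi(g)|\le\chi(1)$ giving $\mathrm{Prob}\le\zeta_{G(O_{k,T}/I)}(2n-2)/|G(O_{k,T}/I)|$, inflation along $G(\widehat{O_{k,T}})\twoheadrightarrow G(O_{k,T}/I)$, and Theorem \ref{thm:introduction.main} applied with parameter $n-1$ so that $2n-2$ lies strictly to the right of the abscissa --- is exactly the natural way to fill this in. Two loose ends. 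First, ``enlarging $S$ so that $G$ is smooth over $\Spec(O_{k,T})$'' is not something enlarging $S$ can buy: $S$ is a set of excluded characteristics, a number field $k$ (characteristic $0$) is never excluded, and $O_{k,T}$ still has residue characteristics lying in $S$. What enlarging $S$ gives is smoothness over $\Spec \mathbb{Z}[S^{-1}]$, hence (by Hensel and CRT) surjectivity of $G(\widehat{O_{k,T}})\rightarrow G(O_{k,T}/I)$ whenever the support of $I$ avoids the finitely many places of $k$ above $S$; for ideals meeting those places the surjectivity, and with it your inequality $\zeta_{G(O_{k,T}/I)}(s)\le\zeta_{G(\widehat{O_{k,T}})}(s)$, requires a separate justification (a point the paper itself glosses over). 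Second, your monotonicity step only covers admissible $n\ge n_0$; the finitely many admissible $n$ with $3\le n<n_0$ each give a finite value $\zeta_{G(\widehat{O_{k,T}})}(2n-2)$ by the same application of Theorem \ref{thm:introduction.main}, so enlarge $C$ to their maximum, or more simply take $n_0$ to be the least admissible value of $n$ rather than $\mathcal{C}(G)+1$.
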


We deduce Theorem \ref{thm:introduction.main} from the  following one:
\begin{romantheorem}[See \S\S\ref{ssec:pf.zeta.minus.1}] \label{thm:zeta.minus.1} For every $n\geq 2$ such that $\Def_{G_\mathbb{Q},n}$ has rational singularities, there is a finite set $S$ of prime numbers and an integer $C$ such that, for any $p\notin S$ and any unramified extension $F$ of either $\Q_p$ or $\F_p((t))$, we have
$$\zeta_{G(O_F)}(2n-2)-1 \leq C |O_F/I_F|^{-1},$$
where $O_F$ is the ring of integers of $F$ and $I_F$ is its maximal ideal.

\end{romantheorem}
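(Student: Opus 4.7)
The plan is to combine Mednykh's formula for the number of surface-group homomorphisms into a finite group with the counting estimate of Theorem~\ref{thm:intro.count}, then take a pro-finite limit. For any finite group $H$, Mednykh's formula reads
$$
|\Def_{G,n}(H)|\;=\;|H|^{2n-1}\cdot\zeta_H(2n-2),
$$
where $\Def_{G,n}(H)=\{(x_1,y_1,\ldots,x_n,y_n)\in H^{2n}:\prod_i[x_i,y_i]=1\}$. Every continuous irreducible representation of the profinite group $G(O_F)$ factors through some $G(O_F/I_F^k)$, so $\zeta_{G(O_F)}(2n-2)=\lim_{k\to\infty}\zeta_{G(O_F/I_F^k)}(2n-2)$, the limit being increasing in~$k$.

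Write $q=|O_F/I_F|$ and $d=\dim G$. The scheme $\Def_{G,n}$ has generic fiber which, by standard facts about commutator preimages in simply connected simple groups together with the hypothesis of the theorem, is reduced, absolutely irreducible, lci of dimension $(2n-1)d$, and has rational singularities; hence the version of Theorem~\ref{thm:intro.count} for the ring $O_F$ (Theorem~\ref{thm:size.X}) applies. Its condition~\eqref{item:intro.count.3} gives, uniformly in~$k$ and for $p$ outside some finite set~$S$,
$$
\left|\frac{|\Def_{G,n}(O_F/I_F^k)|}{q^{k(2n-1)d}}-\frac{|\Def_{G,n}(\F_q)|}{q^{(2n-1)d}}\right|\;\le\;Cq^{-1}.
$$
Smoothness of $G$ over $O_F$ for $p\notin S$ gives the exact identity $|G(O_F/I_F^k)|=q^{(k-1)d}|G(\F_q)|$. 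Dividing and re-applying Mednykh's formula to $G(\F_q)$ converts this into
$$
\zeta_{G(O_F/I_F^k)}(2n-2)\;=\;\zeta_{G(\F_q)}(2n-2)+O(q^{-1}),
$$
uniformly in $k$. Letting $k\to\infty$,
$$
\zeta_{G(O_F)}(2n-2)-1\;=\;\bigl(\zeta_{G(\F_q)}(2n-2)-1\bigr)+O(q^{-1}).
$$

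The remaining task is to bound $\zeta_{G(\F_q)}(2n-2)-1=\sum_{\rho\neq 1}\dim(\rho)^{2-2n}$ by $O(q^{-1})$. For this I would apply the Landazuri--Seitz lower bound on the minimal nontrivial irreducible dimension of $G(\F_q)$, which is at least $c\,q^{m(G)}$ for some $m(G)\ge 1$ depending only on the type of~$G$, together with the standard bound $|\Irr(G(\F_q))|=O(q^{\rank G})$; these produce $\zeta_{G(\F_q)}(2n-2)-1=O(q^{\rank G-(2n-2)m(G)})$. Since for every simple type one has $m(G)\ge(\rank G+1)/2$, this is already $O(q^{-1})$ for $n\ge 2$, and the theorem follows.

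The main obstacle I foresee is this last step: checking case-by-case (classical versus exceptional) that $m(G)\ge(\rank G+1)/2$ across all simple types, so that $n\ge 2$ alone is enough to absorb the tail. A secondary, more routine, check is that Theorem~\ref{thm:size.X} genuinely delivers the $O(q^{-1})$ bound of condition~\eqref{item:intro.count.3} uniformly for all unramified rings of integers $O_F$ in both mixed and equal characteristic, with constants independent of the auxiliary level~$k$.
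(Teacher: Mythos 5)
Your argument is essentially the paper's: identify $\zeta_{G(O_F/I_F^k)}(2n-2)$ with a ratio of point counts via the Frobenius (Mednykh) formula (Theorem \ref{thm:Frobenius}), feed $\Def_{G,n}$ --- absolutely irreducible and lci by \cite{AA}, with rational singularities by hypothesis --- into Theorem \ref{thm:size.X}(iii) together with Proposition \ref{prop:Denef.Loeser} to get the $O(q^{-1})$ comparison with the level-one count uniformly in $k$ and in both characteristics, use smoothness of $G$ (Hensel) for the denominator, and pass to the limit over congruence quotients since continuous representations are locally constant; the only implicit point you skip is that $h_{G^{2n-1}}(\F_q)$ must be bounded away from $0$ before ``dividing,'' which is immediate from Lang--Weil. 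The one place you genuinely diverge is the finite-field factor: the paper simply quotes \cite[Theorem 3.1]{AKOV} for $\zeta_{G(\F_q)}(2n-2)-1=O(q^{-1})$, whereas you re-derive it from the Landazuri--Seitz minimal-degree bounds combined with Steinberg's $O(q^{\rank G})$ bound on the number of irreducible characters; this does work, the case check $m(G)\geq(\rank G+1)/2$ holds for every type (with equality only for $A_1$, which still yields exponent $-1$ when $n\geq 2$), and it buys you a more elementary, self-contained input at the cost of a type-by-type verification. Two small caveats for that step: $G_{\mathbb{Q}}$ is only assumed $\mathbb{Q}$-simple, so for large $p$ the group $G(\F_q)$ is a product of quasi-simple groups of Lie type over extensions of $\F_q$, and you should apply your bound factorwise using multiplicativity of $\zeta$ (each factor lives over a field of size at least $q$); and the Landazuri--Seitz bounds apply to nontrivial representations via perfectness of $G(\F_q)$ and centrality of proper normal subgroups, which again requires discarding finitely many $p$.
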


In order to prove Theorem  \ref{thm:zeta.minus.1}, we use the Frobenius formula (Theorem \ref{thm:Frobenius} below) that relates $\zeta_{G(O_F/I_F^m)}(2n)$ to the sizes of  $\Def_{G,n}(O_F/I_F^{m})$. These sizes, in turn, are related to the singularities of  $\Def_{G,n}$ by Theorem \ref{thm:intro.count} (or, more precisely, by Theorem \ref{thm:size.X}).

\begin{remark*} Consider the following statements: \begin{enumerate}
\item \label{cond:introduction.main.1} $\alpha({G(O)})<2n-2$.
\item \label{cond:introduction.main.2} $\alpha({G(O_v)})<2n-2$ for any valuation $v$ of $O$.
\item \label{cond:introduction.main.3} $\Def_{G,n}$ has rational singularities.
\item \label{cond:introduction.main.4} $\alpha({G(O)})\leq 2n-2$.
\end{enumerate}
Then $\eqref{cond:introduction.main.1} \implies \eqref{cond:introduction.main.2} \iff \eqref{cond:introduction.main.3} \implies \eqref{cond:introduction.main.4}$.

The implication $\eqref{cond:introduction.main.1} \implies \eqref{cond:introduction.main.2}$ follows easily from the observation that, using the map $G(O) \rightarrow G(O_v)$, we get that $r_n(G(O_v)) \leq r_n(G(O))$. The equivalence $\eqref{cond:introduction.main.2} \iff \eqref{cond:introduction.main.3}$ is proved in \cite{AA}. In this paper, we prove $\eqref{cond:introduction.main.3} \implies \eqref{cond:introduction.main.4}$.

\end{remark*}

\begin{remark*} The assumption that $G$ is defined over $\mathbb{Z}$ can be weakened to assuming that $G$ is defined over the ring of $T$-integers in a number field. Indeed, if $G \subset \GL_{O_{k,T}}$ is defined over $O_{k,T}$, let $\mathcal{G}$ be the Zariski closure of $G$ in $\GL_{O_{k}}$. The theorems above applied to the restriction of scalars $\Res_{O_k/ \mathbb{Z}} \mathcal{G}$ imply the theorems for $G$.
\end{remark*}

\subsection{Structure of the paper}

In \S \ref{sec:prel} we give the necessary preliminaries for the paper. In \S\S\ref{subsec:sing}-\S\S\ref{subsec:jet} we review the relevant algebraic geometry. In \S\S\ref{subsec:FRS} we review the results of \cite{AA}. In \S\S\ref{sebsec:FF} we recall a theorem of Frobenius on representations of finite groups. In \S\S\ref{subsec:def.int} we review relevant parts from the theory of motivic integration.

In \S\ref{sec:count} we prove (a generalization of) Theorem \ref{thm:intro.count}. In \S\S\ref{ssec:count.integral},\S\S\ref{ssec:LW}, and \S\S\ref{ssec:Irr.comp}, we review the tools we use for the proof. The proof itself is given in \S\S\ref{ssec:DL}, \S\S\ref{ssec:deprecise}, \S\S\ref{ssec:horz.to.RS}, and \S\S\ref{ssec:RS.to.rate.precise}.

In \S\ref{sec:global} we prove Theorems \ref{thm:igu}, \ref{thm:introduction.bound}, \ref{thm:introduction.main}, and \ref{thm:zeta.minus.1}. As mentioned above, these imply Theorem \ref{thm:intro.bnd} and Corollary \ref{cor:prob.fin}.

\subsection{Acknowledgements}
{
We thank Vladimir Hinich for useful discussions. Part of this paper was written during the program `Multiplicity Problems in Harmonic Analysis' held at the Hausdorff Institute (2012-2014). A.A. was partially supported by ISF grant 687/13, NSF grant DMS-1100943, and a Minerva foundation grant. N.A. was partially supported by NSF grants DMS-0901638 and DMS-1303205. Both of us were partially supported by BSF grant 2012247.
}

\section{Preliminaries} \label{sec:prel}

\subsection{Conventions} Throughout this article we use the following conventions:
\begin{itemize}
\item Let $X \rightarrow S$ and $Y \rightarrow S$ be $S$-schemes. We denote $X \times_{S} Y$ by $X_Y$. If $Y=\Spec R$ is affine, we write $X_R$ instead of $X_{\Spec R}$.
\item If $p$ is a prime number and $q=p^n$ is a power of $p$, we denote the unique degree-$n$ unramified extension of $\mathbb{Q}_p$ by $\mathbb{Q}_q$, its ring of integers by $\mathbb{Z}_q$, and the maximal ideal of $\mathbb{Z}_q$ by $\mathfrak{m}_q$.
\item For a set $S$ of prime numbers, let $$\mathcal{P}_S=\left\{ p^n \mid \textrm{$p$ is a prime number not in $S$ and $n\in \mathbb{Z}_{\geq 1}$} \right\}$$
be the set of sizes of finite fields of characteristics not in $S$.
\item If $X \rightarrow S$ is a smooth map of relative dimension $d$, let $\Omega_{X / S}^d$ denote the line bundle of relative top differential forms.
\end{itemize}

\subsection{Singularities} \label{subsec:sing}
In this section, we review the notions of resolution of singularities, rational singularities, and complete intersections. For more detailed overview, we refer the reader to \cite[Appendix B]{AA}.
\begin{definition} Let $X$ be an algebraic variety defined over a field $k$. A \emph{resolution of singularities} of $X$ is a proper map $p:\tilde X \to X$ such that $\tilde X$ is smooth, $p$ is birational, and the restriction of $p$ to $p ^{-1} (X^{sm})$ is an isomorphism.

\end{definition}
\begin{definition}[{cf. \cite[I \S 3, page 50-51]{KKMS}}] \label{defn:rational.singularities} Let $X$ be an algebraic variety defined over a field $k$ of characteristic $0$.
\begin{enumerate}
\item We say that $X$ has rational singularities if, for any (equivalently, for some) resolution of singularities $p:\tilde X \to X$, the natural morphism  $Rp_*(\mathcal{O}_{\tilde X}) \to \mathcal{O}_{X}$ is an isomorphism.
\item A (usually singular) point $x \in X(k)$ is a rational singularity if there is a Zariski neighborhood $U\subset X$ of $x$ that has rational singularities.
\end{enumerate}
\end{definition}

\begin{definition}[] \label{defn:lci} Let $X$ be a scheme of finite type over a ring $R$. \begin{enumerate}
\item $X$ is called a complete intersection if there is an affine and smooth map $Y \rightarrow \Spec R$, a closed embedding $X \rightarrow Y$, commuting with the structure maps, and regular functions $f_1,\ldots,f_c\in O_Y(Y)$ such that the ideal of $X$ in $Y$ is generated by the $f_i$ and each $f_i$ is not a zero divisor in $O_Y(Y) / (f_1,\ldots,f_{i-1})$.
\item $X$ is called a local complete intersection if there is an open cover $U_i$ of $X$ such that each $U_i$ is a complete intersection.
\end{enumerate}
\end{definition}

\subsection{Jet schemes and Mustata's theorem}\label{subsec:jet}
In this section we recall the definition of jet schemes and quote one of our main tools, Mustata's theorem (see \cite{Mus}), which relates rational singularities and irreducibility of jet schemes. We will use repeatedly the following simple lemma:

\begin{lemma} Suppose that $Z \rightarrow T \rightarrow S$ and $X \rightarrow S$ are morphisms of schemes. Then $\Hom_T(Z,X_T) \cong \Hom_S(Z,X)$.
\end{lemma}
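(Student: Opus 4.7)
The plan is to invoke the universal property of the fibered product $X_T = X \times_S T$ directly. First I would describe the map
$\Phi : \Hom_T(Z, X_T) \to \Hom_S(Z, X)$
as post-composition with the first projection $\pi_X : X_T \to X$. This is well-defined: a $T$-morphism $f : Z \to X_T$ is in particular an $S$-morphism, and $\pi_X$ is an $S$-morphism, so $\pi_X \circ f : Z \to X$ is an $S$-morphism.

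Next I would build the inverse $\Psi : \Hom_S(Z, X) \to \Hom_T(Z, X_T)$. Let $\varphi : Z \to T$ denote the given structure morphism, which is an $S$-morphism because $Z \to T \to S$ is the structure morphism of $Z$. Given an $S$-morphism $g : Z \to X$, the pair $(g, \varphi)$ consists of two $S$-morphisms to $X$ and $T$ respectively, so by the universal property of the fibered product there is a unique $S$-morphism $\Psi(g) : Z \to X_T$ with $\pi_X \circ \Psi(g) = g$ and $\pi_T \circ \Psi(g) = \varphi$. The second identity says exactly that $\Psi(g)$ is a $T$-morphism.

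Finally I would check $\Phi$ and $\Psi$ are mutually inverse. The composition $\Phi \circ \Psi$ sends $g$ to $\pi_X \circ \Psi(g) = g$. For $\Psi \circ \Phi$, given $f : Z \to X_T$ over $T$, the morphism $\Psi(\Phi(f))$ is characterized uniquely by $\pi_X \circ \Psi(\Phi(f)) = \pi_X \circ f$ and $\pi_T \circ \Psi(\Phi(f)) = \varphi = \pi_T \circ f$, where the last equality holds because $f$ is over $T$. By the uniqueness clause in the universal property, $\Psi(\Phi(f)) = f$. There is no genuine obstacle here; the statement is essentially a restatement of the universal property of base change, and the only mild care needed is in observing that a $T$-morphism is automatically an $S$-morphism and that the structure morphism $Z \to T$ is itself an $S$-morphism.
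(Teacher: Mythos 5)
Your argument is correct and is exactly the standard universal-property argument the paper has in mind: the paper states this as a ``simple lemma'' without proof, and the bijection via $f \mapsto \pi_X \circ f$ with inverse given by the universal property of $X_T = X \times_S T$ applied to the pair $(g, \varphi)$ is the intended justification. The one point worth noting explicitly, which you do handle, is that a $T$-morphism is in particular an $S$-morphism and that the structure map $Z \to T$ is an $S$-morphism, so the universal property applies verbatim.
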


We move on to define jet schemes.

\begin{notation} For a scheme $Y$, denote $Y^{[m]}=Y \times_{\Spec \mathbb{Z}} \Spec \mathbb{Z}[t]/t^{m+1}$.
\end{notation}
The projection $Y^{[m]} \rightarrow Y$ is finite and (locally) free. For every scheme $S$, the assignment $Y \mapsto Y^{[m]}$ gives rise to a functor between $(\Sch_S)$ and $(\Sch_{S^{[m]}})$.

\begin{notation} Let $X \rightarrow S$ be an affine morphism of finite type. Let $\mathscr{J}_m(X/S)$ be the restriction of scalars of $X^{[m]}$ along the map $S^{[m]} \rightarrow S$, i.e., the functor $\mathscr{J}_m(X/S):(\Sch_S) \rightarrow (\Set)$ given by
\[
\mathscr{J}_m(X/S)(Z)=\Hom_{S^{[m]}}(Z^{[m]},X^{[m]})\cong \Hom_S(Z^{[m]},X).
\]
\end{notation}

From \cite[\S 7.6]{NeronModels}, we get
\begin{thmdef} The functor $\mathscr{J}_m(X/S)$ is representable by a scheme of finite type over $S$. We call the representing scheme the $m$-th relative jet scheme of $X \rightarrow S$ and denote it by $\Jet_m(X/S)$.
\end{thmdef}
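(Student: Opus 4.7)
My plan is to prove representability by an explicit construction in the affine case, using the fact that $S^{[m]} \to S$ is finite and locally free. First I would reduce to the case where $S = \Spec R$ and $X = \Spec A$ is affine; since $X \to S$ is assumed affine, this is just a matter of covering $S$ by affine opens and checking that the local constructions glue, which is straightforward because the functor $\mathscr{J}_m(X/S)$ is manifestly a Zariski sheaf on $S$.

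Having reduced to the affine case, I would present $A$ as $R[x_1,\ldots,x_n]/(f_1,\ldots,f_k)$, so $X$ is a closed subscheme of $\mathbb{A}^n_S$. For any $S$-scheme $Z$, an element of $\Hom_S(Z^{[m]},\mathbb{A}^n_S)$ is a tuple of $n$ global sections of $\mathcal{O}_{Z^{[m]}} = \mathcal{O}_Z[t]/t^{m+1}$, which is the same datum as $n(m+1)$ global sections of $\mathcal{O}_Z$, obtained by reading off the coefficients of $1, t, \ldots, t^m$. Hence $\mathscr{J}_m(\mathbb{A}^n_S/S)$ is represented by $\mathbb{A}^{n(m+1)}_S$, with coordinates $x_{i,j}$ for $1 \le i \le n$, $0 \le j \le m$. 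Then the sub-functor $\mathscr{J}_m(X/S) \subset \mathscr{J}_m(\mathbb{A}^n_S/S)$ is cut out by requiring that $f_\ell(x_1(t),\ldots,x_n(t)) \equiv 0 \pmod{t^{m+1}}$ for each $\ell$, where $x_i(t) = \sum_{j=0}^m x_{i,j} t^j$. Expanding each $f_\ell$ and collecting coefficients of $t^0,\ldots,t^m$ yields $k(m+1)$ polynomial equations in the $x_{i,j}$, so $\mathscr{J}_m(X/S)$ is represented by a closed subscheme of $\mathbb{A}^{n(m+1)}_S$, and in particular is of finite type over $S$.

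The main technical point — and the only place where something non-formal is used — is verifying that these local constructions are independent of the chosen presentation $A = R[x_1,\ldots,x_n]/(f_1,\ldots,f_k)$, so that they glue over a covering of $S$ by affine opens; this is immediate once one observes that the resulting scheme is characterized up to unique isomorphism by the functor it represents, and that the functor is defined without reference to any presentation. I do not expect any serious obstacle, since the construction is essentially the Weil restriction along the finite locally free map $S^{[m]} \to S$, and representability of such Weil restrictions for quasi-projective (in particular affine) morphisms is classical, as carried out in \cite[\S 7.6]{NeronModels}.
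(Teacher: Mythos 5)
Your proof is correct, but it takes a more hands-on route than the paper, which disposes of the statement by a single citation: the functor $\mathscr{J}_m(X/S)$ is by definition the Weil restriction of $X^{[m]}$ along the finite locally free morphism $S^{[m]}\rightarrow S$, and the paper simply invokes the general representability theorem for such restrictions of scalars from \cite[\S 7.6]{NeronModels}. You instead unwind that argument explicitly: reduce to $S=\Spec R$ and $X=\Spec R[x_1,\ldots,x_n]/(f_1,\ldots,f_k)$, identify $\mathscr{J}_m(\mathbb{A}^n_S/S)$ with $\mathbb{A}^{n(m+1)}_S$ via the coefficients of $x_i(t)=\sum_j x_{i,j}t^j$, and cut out $\mathscr{J}_m(X/S)$ by the $t$-coefficients of $f_\ell(x_1(t),\ldots,x_n(t))$ modulo $t^{m+1}$; gluing over an affine cover of $S$ is then handled by the Zariski-sheaf property and the uniqueness of representing objects, exactly as you say (note that over a possibly non-Noetherian base one may need infinitely many $f_\ell$, but a closed subscheme of $\mathbb{A}^{n(m+1)}_S$ is still of finite type, so nothing breaks). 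What your approach buys is a self-contained proof together with explicit equations for $\Jet_m(X/S)$, which in particular makes the finite-type assertion and the functorial base-change identification transparent; what the paper's citation buys is brevity and the fact that BLR's theorem covers the gluing and more general (e.g.\ quasi-projective) situations uniformly. Either argument is a complete and acceptable proof of the Theorem-Definition.
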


Let $X\rightarrow S$ be a morphism as above. For any morphisms $Z \rightarrow T \rightarrow S$ of schemes, we have a canonical bijections
\[
\Hom_T(Z,\Jet_m(X_T/T))\cong\Hom_T(Z^{[m]},X_T)\cong\Hom_S(Z^{[m]},X)\cong
\]
\[
\cong\Hom_S(Z,\Jet_m(X/S))\cong\Hom_T(Z,\Jet_m(X/S)_T),
\]
which is functorial in $Z$, and, therefore, defines an isomorphism $\Jet_m(X_T/T)\cong\Jet_m(X/S)_T$.

The following is Mustata's theorem:

\begin{theorem}[\cite{Mus}] Let $X$ be an irreducible local complete intersection variety defined over a field $k$ of characteristic $0$. Then the following are equivalent: \begin{enumerate}
\item $X$ has rational singularities.
\item All the jet schemes $\Jet_m(X/k)$ are irreducible.
\end{enumerate}

\end{theorem}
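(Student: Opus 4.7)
The plan is to reduce the equivalence to a dimension computation on jet schemes via a log resolution, and then to invoke the classical comparison between rational and canonical singularities for local complete intersections. First, since $X$ is l.c.i.\ of pure dimension $n$, every irreducible component of $\Jet_m(X/k)$ has dimension at least $(m+1)n$ (by the general lower bound for jet schemes of l.c.i.\ varieties), while the smooth locus contributes a distinguished component, namely the closure of $\Jet_m(X^{sm}/k)$, which fibers as a rank-$mn$ vector bundle over $X^{sm}$ and has dimension exactly $(m+1)n$. Therefore $\Jet_m(X/k)$ is irreducible for every $m$ if and only if $\dim \Jet_m(X/k) = (m+1)n$ for every $m \geq 0$.

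Next, I would fix a log resolution $\pi : Y \to X$ with exceptional divisors $E_1,\ldots,E_r$ and relative canonical divisor $K_{Y/X} = \sum_i a_i E_i$; this is well-defined because $X$ is Gorenstein as an l.c.i. Then $\Jet_m(Y/k)$ is smooth and irreducible of dimension $(m+1)n$, and I stratify it by the contact multi-index $\vec{\nu}=(\nu_1,\ldots,\nu_r)$ recording the order of vanishing of the jet along each $E_i$. The map $\pi_m : \Jet_m(Y/k) \to \Jet_m(X/k)$ has dense image on each component (via the valuative criterion applied in the arc-space, any arc on $X$ lifts to $Y$, and one truncates), so every irreducible component of $\Jet_m(X/k)$ is dominated by the closure of the image of some contact stratum.

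The core technical step, and the main obstacle, is the dimension computation for these images. Using the Denef--Loeser change-of-variables formula for motivic integrals, or, equivalently, a direct analysis of the relative jet scheme of $\pi$, one shows that the generic fibers of $\pi_m$ over the image of the $\vec{\nu}$-stratum are affine spaces whose dimensions are controlled linearly by $\sum_i \nu_i a_i$. The precise formula yields $\dim \overline{\pi_m(\Jet_m^{\vec{\nu}}(Y/k))} = (m+1)n - \sum_i \nu_i a_i$ on admissible multi-indices. Consequently, the equality $\dim \Jet_m(X/k) = (m+1)n$ holds for every $m$ if and only if $\sum_i \nu_i a_i \geq 0$ for all admissible $\vec\nu$, equivalently $a_i \geq 0$ for every exceptional divisor of every log resolution, equivalently $X$ has canonical singularities.

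To close the argument, I would invoke the classical theorem (due to Elkik and others) that, for local complete intersection varieties in characteristic zero, rational singularities coincide with canonical singularities. Combining this with the preceding reduction establishes (1) $\Leftrightarrow$ (2). The non-trivial ingredient is entirely the dimension formula for contact strata via motivic integration; the remainder is formal bookkeeping together with Elkik's comparison.
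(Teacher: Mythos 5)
First, note that the paper does not prove this statement at all: it is quoted verbatim from Mustata's paper \cite{Mus}, so there is no internal proof to compare with. Your sketch is, in outline, Mustata's original strategy (log resolution, a dimension count on contact loci via the change-of-variables/motivic-integration machinery, and Elkik's identification of rational with canonical singularities in the Gorenstein/l.c.i.\ case), so the architecture is reasonable. However, two linked steps are genuinely wrong as stated. (i) The opening reduction ``$\Jet_m(X/k)$ is irreducible for every $m$ iff $\dim \Jet_m(X/k)=(m+1)n$ for every $m$'' fails in the backward direction: given the l.c.i.\ lower bound, the condition $\dim \Jet_m(X/k)=(m+1)n$ for all $m$ is equivalent to all jet schemes being \emph{pure-dimensional}, which characterizes \emph{log canonical} l.c.i.\ singularities, not canonical ones. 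For example, the cone $x^3+y^3+z^3=0$ in $\mathbb{A}^3$ (cone over a plane elliptic curve) is an l.c.i.\ surface whose jet schemes all have the expected dimension, yet it does not have rational singularities and its jet schemes are reducible for suitable $m$. Irreducibility requires the strict estimate $\dim \pi_m^{-1}(X^{sing})<(m+1)n$, where $\pi_m:\Jet_m(X/k)\to X$ is the projection; only this strict bound, combined with the l.c.i.\ inequality $\dim Z\geq (m+1)n$ for every component $Z$, forces every component to meet $\pi_m^{-1}(X^{sm})$ and hence coincide with the closure of $\Jet_m(X^{sm}/k)$.

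(ii) The ``core technical step'' is off by exactly the term that separates these two conditions: the contact stratum with multi-index $\vec{\nu}$ has codimension $\sum_i \nu_i$ inside $\Jet_m(Y/k)$, and the generic fibers of $\pi_m$ over its image have dimension $\sum_i \nu_i a_i$, so the image has dimension $(m+1)n-\sum_i \nu_i(a_i+1)$, not $(m+1)n-\sum_i \nu_i a_i$. With the correct exponent, your criterion ``$\dim=(m+1)n$ for all $m$'' translates to $a_i\geq -1$ (log canonical), so as written your argument proves the wrong equivalence; the canonical condition $a_i\geq 0$ is what guarantees $\sum_i\nu_i(a_i+1)\geq\sum_i\nu_i>0$ for $\vec\nu\neq 0$, i.e.\ the \emph{strict} dimension drop over the singular locus needed in (i). There is also a secondary point you should not wave away: jets on $X$ need not lift to $Y$, so the covering of components of $\Jet_m(X/k)$ by images of contact strata has to be run at the level of arcs and cylinders (or stable sets) in the Denef--Loeser formalism before truncating. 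Once (i) is replaced by the criterion $\dim\pi_m^{-1}(X^{sing})<(m+1)n$ and (ii) by the corrected formula with $a_i+1$, the rest of your plan, including the final appeal to Elkik's theorem that canonical and rational singularities agree for local complete intersections in characteristic zero, does assemble into Mustata's proof.
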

\subsection{Rational singularities and integration}\label{subsec:FRS}

In this section we review a result of \cite{AA}. Recall that a measure $m$ on a $p$-adic analytic manifold\footnote{A $p$-adic manifold is a Hausdorff space $X$ with a sheaf of functions that is locally isomorphic to the space $\mathbb{Z}_p^N$ together with the sheaf of functions that are locally given by convergent power series; see \cite{Se}.} is Schwartz if it is compactly supported and every point has a neighborhood $U$ and a diffeomorphism $\varphi :U \rightarrow \mathbb{Z}_p^n$ such that $\varphi_* m$ is a scalar multiple of the Haar measure.

\begin{theorem} \lbl{thm:push.forward.detailed} Let $\varphi : X \rightarrow Y$ be a map between smooth algebraic varieties defined over a non-archimedean local field $F$ of characteristic zero, and let $x \in X(F)$. Assume that $\varphi$ is flat at $x$, and $x$ is a rational singularity of $\varphi ^{-1} (\varphi(x))$. Then, there is a neighborhood $U \subset X(F)$ of $x$ such that, for any Schwartz measure $m$ on $U$, the measure $\varphi_*(m) $ has continuous density, i.e., can be written as a product of a Schwartz measure and a continuous function.
\end{theorem}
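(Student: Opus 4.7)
I would prove the theorem by rewriting the density of $\varphi_* m$ as a fiber integral, reducing it to monomial form via an adapted log-resolution of $\varphi$, and then using the rational-singularities hypothesis to control the resulting integrals uniformly in the parameter $y$.

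First, the density as a fiber integral. After shrinking $X$ and $Y$ to $F$-analytic neighborhoods of $x$ and $y_0 := \varphi(x)$, smoothness of $X$ and $Y$ and flatness of $\varphi$ at $x$ supply nowhere-vanishing top forms $\omega_X$, $\omega_Y$ and a relative top form $\omega_{X/Y}$ near $x$ (well-defined up to a unit) with $\omega_X = \omega_{X/Y} \wedge \varphi^* \omega_Y$. Writing any Schwartz measure on $U$ as $m = \phi \cdot |\omega_X|$ with $\phi$ locally constant and compactly supported, the density of $\varphi_* m$ relative to $|\omega_Y|$ is the fiber integral
\[
f(y) = \int_{\varphi^{-1}(y)(F)} \phi \cdot |\omega_{X/Y}|,
\]
and it suffices to show that $f$ is continuous on a neighborhood of $y_0$.

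Next, resolution and monomialization. I would apply Hironaka to produce a proper birational $\pi \colon \tilde X \to X$, an isomorphism over a dense open, with $\tilde X$ smooth, such that $\pi^{-1}(\varphi^{-1}(y_0))$ is a simple normal crossings divisor. After further toroidalization I would arrange that $\tilde \varphi := \varphi \circ \pi$ is monomial near $\pi^{-1}(x)$: in local coordinates $u_1,\dots,u_n$ on $\tilde X$ and $v_1,\dots,v_d$ on $Y$ centered at $y_0$, each $\tilde \varphi^* v_j$ equals $u^{\alpha_j}$ times a unit. The change-of-variables formula then gives $\varphi_* m = \tilde\varphi_*(\pi^* m)$ with $\pi^* m = (\phi \circ \pi) \cdot |J_\pi| \cdot |\omega_{\tilde X}|$, where $J_\pi$ is a local equation for the Jacobian divisor of $\pi$. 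On each monomial chart, the local contribution to $f(y)$ reduces to an explicit monomial $p$-adic integral whose dependence on $y$ can be analyzed directly.

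Finally, the rational-singularities hypothesis provides the crucial quantitative input. Smoothness of $X$ and $Y$ combined with flatness at $x$ forces $\varphi^{-1}(y_0)$ to be a local complete intersection near $x$, so the Kempf/Elkik characterization of rational singularities for a local complete intersection yields a numerical inequality bounding the multiplicities of $J_\pi$ from above by combinations of the monomial exponents $\alpha_j$. This inequality is exactly what makes the local monomial fiber integrals uniformly bounded and continuous in $y$ near $y_0$; summing the contributions over the finitely many charts that cover the compact set $\pi^{-1}(x)$ then gives continuity of $f$. I expect the main obstacle to be the toroidalization step together with the extraction of the correct numerical inequality: the log-resolution of the ambient $X$ must restrict to (or dominate) a resolution of the fiber $\varphi^{-1}(y_0)$ whose discrepancies are controlled precisely by the rational-singularities hypothesis, and this is where the full geometric strength of the assumption is used.
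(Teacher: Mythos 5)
You should first note the benchmark: this paper does not prove Theorem \ref{thm:push.forward.detailed} at all --- \S\S\ref{subsec:FRS} only \emph{reviews} it, and the result is imported from \cite{AA} --- so the comparison is with the proof given there, not with anything in this text. Your overall plan (express the density of $\varphi_*m$ as a fiber integral, pass to a resolution, and convert the rational-singularities hypothesis into a numerical inequality via Elkik's theorem in the local complete intersection case) is the right kind of strategy and is in the spirit of that proof; the reduction to the fiber being a local complete intersection is also correct. (One small caveat: the factorization $\omega_X=\omega_{X/Y}\wedge\varphi^*\omega_Y$ with a genuine nonvanishing relative form exists only on the smooth locus of $\varphi$, not near $x$ in general; one integrates the Gelfand--Leray form over the smooth part of each fiber, which is standard but should be said.)

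The genuine gap is the monomialization step, and it sits exactly at the crux of the theorem, namely uniformity in $y$. There is no theorem that lets you make $\varphi\circ\pi$ monomial near $\pi^{-1}(x)$ by modifying the \emph{source only}: toroidalization/monomialization results (Abramovich--Denef--Karu, Cutkosky) modify the target as well, and modifying $Y$ is not available to you, because continuity has to be proved on the given $Y$ at $y_0$, which typically lies in the discriminant. What a source-only log resolution does give is principalization: each $\tilde\varphi^*v_j$ becomes a unit times a monomial, $\epsilon_j(u)\,u^{\alpha_j}$. That is enough to estimate the measure of $\varphi^{-1}$ of small balls centered at $y_0$ (units do not affect valuations), hence boundedness-type statements at $y_0$; but for $y\neq y_0$ the fiber conditions $\epsilon_j(u)u^{\alpha_j}=y_j$ involve the units essentially, so the ``explicit monomial $p$-adic integral whose dependence on $y$ can be analyzed directly'' is not what you actually have, and continuity at $y_0$ --- control of all nearby fibers, not just the special one --- is left unproved. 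For $\dim Y=1$ this step can be supplied by Igusa's asymptotic expansion of fiber integrals; for higher-dimensional targets an additional argument is needed (a reduction to the one-dimensional case, or uniform cell-decomposition/constructible-function technology), and your sketch is silent on it. Separately, your numerical inequality points the wrong way: continuity requires the order of $J_\pi$ along each exceptional divisor to be bounded \emph{below} by the relevant multiplicities of the pulled-back equations (in the hypersurface case $k_i+1>m_i$, i.e.\ all nontrivial candidate poles of the associated local zeta function lie strictly to the left of $-1$), not that ``the multiplicities of $J_\pi$ are bounded above by combinations of the $\alpha_j$''; and even the correct inequality does not follow formally from rational singularities of the fiber --- one needs Elkik's identification of rational with canonical singularities in the Gorenstein (in particular l.c.i.) case together with an inversion-of-adjunction-type transfer from divisors over the fiber to divisors over the ambient $X$, which you flag as the main obstacle but do not carry out.
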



\subsection{Frobenius formula} \label{sebsec:FF}

We will use the following Theorem of Frobenius:
\begin{theorem}[Frobenius] \lbl{thm:Frobenius}  Let $\Gamma$ be a finite group, and let $n \geq 1$ be an integer. Then
\[
\left| \left\{ (x_1,y_1,\ldots,x_n,y_n)\in \Gamma ^{2n} \mid [x_1,y_1] \cdots [x_n,y_n]=1 \right\} \right| = | \Gamma |^{2n-1}\sum_{\pi \in \Irr(\Gamma)} \frac{1}{(\dim\, \pi)^{2n-2}},
\]
where $\Irr(\Gamma)$ denotes the set of isomorphism classes of irreducible representations of $\Gamma$.
\end{theorem}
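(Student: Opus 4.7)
The plan is to prove the identity via character theory of the finite group $\Gamma$. For each $g \in \Gamma$, set
\[
N_n(g) = \left|\left\{ (x_1,y_1,\ldots,x_n,y_n)\in \Gamma^{2n} \mid [x_1,y_1]\cdots[x_n,y_n]=g \right\}\right|,
\]
so that the quantity of interest is $N_n(1)$. Conjugating all variables simultaneously shows that $N_n$ is a class function on $\Gamma$, so it can be expanded in the basis $\{\chi_\pi\}_{\pi\in \Irr(\Gamma)}$ of irreducible characters.

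The main computation is the base case $n=1$: I would prove
\[
N_1 = |\Gamma|\sum_{\pi\in \Irr(\Gamma)} \frac{\chi_\pi}{\chi_\pi(1)}.
\]
For this, compute the Fourier coefficient $\langle N_1, \chi_\pi\rangle = \frac{1}{|\Gamma|}\sum_{x,y\in \Gamma}\overline{\chi_\pi([x,y])}$. The inner double sum is evaluated by the standard Schur-lemma consequence $\sum_{x\in \Gamma}\pi(x)A\pi(x^{-1}) = \frac{|\Gamma|}{\chi_\pi(1)}\tr(A)\cdot I$, applied with $A=\pi(y)$, followed by summation in $y$ against $\pi(y^{-1})$. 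Using also that $\overline{\chi_\pi([x,y])}=\chi_\pi([y,x])$, so the conjugation is immaterial after summation, this collapses the quadruple sum to $|\Gamma|^2/\chi_\pi(1)$, yielding the coefficient $|\Gamma|/\chi_\pi(1)$.

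The remaining steps are formal. Unfolding the product of commutators shows that $N_n$ is the $n$-fold convolution $N_1^{*n}$ in the group algebra, where $(f*h)(g)=\sum_{x\in\Gamma} f(gx^{-1})h(x)$. The orthogonality identity $\chi_\pi * \chi_\sigma = \delta_{\pi\sigma}\frac{|\Gamma|}{\chi_\pi(1)}\chi_\pi$---an immediate consequence of Schur's lemma in $\mathbb{C}[\Gamma]$---then gives, by induction on $n$,
\[
N_n = |\Gamma|^{2n-1}\sum_{\pi\in\Irr(\Gamma)} \frac{\chi_\pi}{\chi_\pi(1)^{2n-1}}.
\]
Evaluating at $g=1$ and using $\chi_\pi(1)=\dim\pi$ recovers the stated formula. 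The only step that requires real calculation is the base case; the rest is formal manipulation in the convolution algebra of class functions, and I anticipate no serious obstacle since this is a classical identity due to Frobenius.
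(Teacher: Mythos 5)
Your argument is correct and complete: the Fourier coefficient computation for $N_1$ via Schur's lemma, the identification $N_n = N_1^{*n}$, and the diagonal action of convolution on irreducible characters together give $N_n = |\Gamma|^{2n-1}\sum_{\pi} \chi_\pi/\chi_\pi(1)^{2n-1}$, which at $g=1$ is exactly the stated formula. Note that the paper itself offers no proof of this statement---it is quoted as a classical theorem of Frobenius---so there is nothing to compare against; your write-up is the standard character-theoretic argument and would serve as a self-contained justification of the result the paper merely cites.
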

\subsection{Definable Integrals}\label{subsec:def.int}

In this section, we recall the setting of definable $p$-adic integrals and state a uniformity result about $p$-adic integrals, which is a special case of \cite[Theorem 1.3]{HK}.

\subsubsection{The Denef--Pas language} Let $L_\emptyset$ be the first-order language with:

\begin{itemize}
\item Three sorts, denoted by $\VF$, $\RF$, and $\VG$, and called the valued field sort, the residue field sort, and the valuation group sort, respectively.
\item Five constants, $0_{\VF},1_{\VF} \in \VF$, $0_{\RF},1_{\RF}\in \RF$, and $0_{\VG},\infty_{\VG} \in \VG$.
\item Seven functions, $+_{\VF}:\VF \times \VF \rightarrow \VF$, $\cdot_{\VF} :\VF \times \VF \rightarrow \VF$, $+_{\RF}:\RF \times \RF \rightarrow \RF$, $\cdot_{\RF}:\RF \times \RF \rightarrow \RF$, $+_{\VG} :\VG \times \VG \rightarrow \VG$, $\val: \VF \rightarrow \VG$, and $\ac: \VF \smallsetminus \left\{ 0_{\VF} \right\} \rightarrow \RF$.
\item One binary relation, $<$, on $\VG$.
\end{itemize}

\subsubsection{Structures} Suppose that $F$ is a field with a non-archimedean valuation $v$. Denote $O=\left\{ x \in F \mid v(x) \geq 0 \right\}$ and $\mathfrak{m}=\left\{ x\in F \mid v(x)>0 \right\}$. Assume that the short exact sequence
\begin{equation} \label{eq:ses.valued.field}
1 \rightarrow O^\times / (1+ \mathfrak{m}) \rightarrow F^ \times / (1+\mathfrak{m}) \rightarrow F^ \times / O^ \times \rightarrow 1
\end{equation}
splits, and let $\sigma : F^\times / (1+ \mathfrak{m}) \rightarrow O^\times /(1+ \mathfrak{m})$ be such a splitting. An important example is the case where $F$ is a non-archimedean local field. In this case, any uniformizer gives a splitting of \eqref{eq:ses.valued.field}.

From this data, we construct a structure for $L_\emptyset$ as follows: the sort $\VF$ is interpreted as $F$, the sort $\RF$ is interpreted as the residue field of $F$, and the sort $\VG$ is interpreted as {$\mathbb{Z}\cup \left\{ \infty \right\}$}. The function $\val$ is interpreted as the valuation $v$ and the function $\ac$ is interpreted as the composition
\[
F ^ \times \rightarrow F ^\times / (1+ \mathfrak{m}) \stackrel{\sigma}{\rightarrow} O^ \times / (1+ \mathfrak{m}) =\RF(F) \smallsetminus \left\{ 0 \right\}.
\]
The interpretation of the constants, relation, and the rest of the functions is clear. From this point on, when we write a valued field, we mean a valued field together with a splitting as above, and we consider it as a structure of the Denef--Pas language.



\subsubsection{Quantifier-free definable functions}
Suppose that $\mathfrak{S}$ is a structure for $L$. If $\varphi=\varphi(x_1,\ldots,x_n,y_1,\ldots,y_m,z_1,\ldots,z_k)$ is a formula in $L$, where the variables $x_i$ are of $\VF$ sort, the variables $y_i$ are of $\RF$ sort, and the variables $z_i$ are of $\VG$ sort, we denote
\[
\varphi(\mathfrak{S})= \left\{ (a_i,b_i,c_i) \in \VF(\mathfrak{S})^n \times \RF(\mathfrak{S})^m \times \VG(\mathfrak{S})^k \mid \textrm{$\varphi(a_i,b_i,c_i)$ holds in $\mathfrak{S}$} \right\}.
\]

\begin{definition}
$ $
\begin{enumerate}
\item We say that two formulas $\phi(x)$ and $\psi(x)$ are equivalent if, for every Henselian valued field $F$, we have $\phi(F)=\psi(F)$. An equivalence class of formulas is called a definable set. If $X$ is a definable set and $F$ is a Henselian valued field, we write $X(F)=\phi(F)$, for any $\phi \in X$.
\item We say that a formula in $L$ is quantifier-free if there are no quantifiers in it. A definable set is called quantifier-free, if there is a quantifier-free formula representing it.
\item Suppose that $X,Y$ are definable sets on variables $x,y$ respectively ($x$ and $y$ are tuples of variables of the three sorts of $L$). A quantifier-free definable set $\Gamma$ on the tuple of variables $(x,y)$ is called a quantifier-free definable function if, for any Henselian valued field $F$, the set $\Gamma(F)$ is the graph of a function between $X(F)$ and $Y(F)$, which we also denote by $\Gamma(F)$.
\end{enumerate}
\end{definition}

\begin{example}
$ $

\begin{enumerate}
\item Let $X \subset \mathbb{A}_{\mathbb{Z}}^N$ be an affine scheme over $\Spec \mathbb{Z}$. Choose a generating set $\left\{ p_1,\ldots,p_M \right\} \subset \mathbb{Z}[x_1,\ldots,x_N]$ for the ideal of polynomials vanishing on $X$, and let $X_{\VF}$ be the equivalence class of the formula
\[
\left( p_1(x_1,\ldots,x_N)=0 \right) \wedge \cdots \left( p_M(x_1,\ldots,x_N)=0 \right),
\]
where $x_1,\ldots,x_N$ are of the $\VF$ sort. For any valued field $F$, we have $X_{\VF}(F)=X(F)$. Similarly, there is also a quantifier-free definable set, denoted by $X_{\RF}$, such that $X_{RF}(F)=X(k)$, for all Henselian valued fields $F$ with residue field $k$.
\item Let $f: X \rightarrow Y \rightarrow \Spec \mathbb{Z}$ be morphisms of schemes. Then there is a quantifier-free definable function $f_{\VF}:X_{\VF} \rightarrow Y_{\VF}$ such that, for every Henselian valued field $F$, the function $f_{\VF}(F): X_{\VF}(F) \rightarrow Y_{\VF}(F)$ coincides with the restriction of $f$ to $F$ points.
\item If $X$ is an algebraic variety and $p$ is a regular function on $X$, the formula $y=\val(p(x))$ gives rise to a quantifier-free definable function from $X_{\VF}$ to $\VG$.
\end{enumerate}
\end{example}

The following theorem follows from \cite[Theorem 1.3]{HK}:

\begin{theorem} \label{thm:integration.q.f.} Let $X \rightarrow \Spec \mathbb{Z}$ be an affine and smooth morphism of relative dimension $d$, let $\omega \in \Gamma(X,\Omega_{X/\Spec \mathbb{Z}}^d)$, and let $f_1,f_2: X_{\VF} \rightarrow \VG$ be quantifier-free definable functions. Then there are \begin{enumerate}
\item A constant $M$.
\item A finite set $S$ of prime numbers.
\item (Finite or infinite) arithmetic progressions $I_1,\ldots,I_M \subset \mathbb{Z}$.
\item Polynomials $g_1,\ldots,g_M\in \mathbb{Q}[x,y]$ such that $g_j(x,y)$ is positive on $I_j \times \mathbb{R}_{>1}$.
\item Reduced affine schemes $V_1,\ldots,V_M$ of finite type over $\mathbb{Z}$ which are smooth over $\Spec \mathbb{Z}[S ^{-1}]$ such that $(V_i)_\mathbb{Q}$ are non-empty and irreducible.
\item Rational numbers $\alpha_1,\ldots,\alpha_M,\beta_1,\ldots,\beta_M$.
\item Positive integers $a_{j,k}$ for $j=1,\ldots,M$ and $k=1,\ldots,M_j$.
\end{enumerate}
such that, if $F$ is a local field with residue field $\mathbb{F}_q$ of characteristic not in $S$, $m\in \mathbb{Z}$, and $| \omega |_F$ denotes the measure on $X(F)$ corresponding to $\omega$, then
\begin{equation} \label{eq:motivic.integration}
\int_{\left\{ x\in X(O_F) \mid f_1(x)=m\right\}} q^{-f_2(x)}| \omega |_F = \sum_{j=1}^M 1_{I_j}(m) |V_j(\mathbb{F}_q)| \frac{g_j(m,q) q^{\alpha_j m+ \beta_j}}{\prod_{k=1}^{M_j} \left( 1-q^{-a_{jk}} \right)},
\end{equation}
assuming the integral converges.
\end{theorem}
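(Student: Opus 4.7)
The plan is to specialize the general motivic integration theorem of Hrushovski--Kazhdan to the arithmetic setting, while tracking the dependence on the integer parameter $m$. First, I would observe that the domain of integration $\{x\in X(O_F)\mid f_1(x)=m\}$ is cut out by a quantifier-free formula in the Denef--Pas language in which $m$ appears only as a parameter in the value-group sort, and that the integrand $q^{-f_2(x)}$ is likewise controlled by a quantifier-free definable function $f_2\colon X_{\VF}\to\VG$. Since $X\to\Spec\mathbb{Z}$ is smooth of relative dimension $d$ and $\omega\in\Gamma(X,\Omega^d_{X/\Spec\mathbb{Z}})$, the measure $|\omega|_F$ is, locally on $X(O_F)$, a nowhere vanishing multiple of the Haar measure transported from $O_F^d$, so everything reduces to integrating a bounded definable function against Haar measure on a definable subset of affine space.

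Next I would invoke \cite[Theorem 1.3]{HK} (or its concrete $p$-adic specialization) to obtain, for each fixed $m$ and each sufficiently large prime $p$, an identity expressing the integral as a finite $\mathbb{Q}$-linear combination of expressions $|V(\mathbb{F}_q)|\cdot R(q)$, where the $V$ are reduced affine schemes of finite type over $\mathbb{Z}$, smooth over $\Spec\mathbb{Z}[S^{-1}]$ for some finite set of primes $S$, and $R(q)$ is a rational function whose denominator is a product of terms $(1-q^{-a})$ with $a\in\mathbb{Z}_{>0}$. By refining the stratification and discarding finitely many primes one may, after a standard Lang--Weil style argument, arrange each $V_j$ so that $(V_j)_{\mathbb{Q}}$ is nonempty and irreducible. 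At this stage the integral has the desired form for each individual $m$, but with a priori $m$-dependent data.

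The main content is then to extract explicit uniformity in $m$. For this I would appeal to Presburger cell decomposition in the value-group sort: the family of conditions $\{f_1(x)=m\}$ defines, after projecting out the valued field and residue field sorts, a Presburger-definable family whose slices stratify $\mathbb{Z}$ into finitely many arithmetic progressions $I_1,\dots,I_M$. On each $I_j$, the integration over value-group variables produces (i) geometric series whose summation contributes factors $q^{\alpha_j m+\beta_j}/\prod_k(1-q^{-a_{jk}})$ and (ii) counts of Presburger-definable lattice points in polyhedra whose defining inequalities are affine in $m$; these counts are piecewise polynomial in $m$ and $q$, yielding the factors $g_j(m,q)$. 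Positivity of $g_j$ on $I_j\times\mathbb{R}_{>1}$ is forced by the positivity of the integrand.

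The principal obstacle is uniformity in the prime $p$: one must ensure that the set $S$, the progressions $I_j$, the schemes $V_j$, the polynomials $g_j$, and the exponents $\alpha_j,\beta_j,a_{jk}$ are all independent of $p$, with only $q=|\mathbb{F}_q|$ and the point counts $|V_j(\mathbb{F}_q)|$ varying. This is precisely the uniform-in-$p$ output of Hrushovski--Kazhdan, built on uniform quantifier elimination in the Denef--Pas language and the $\mathrm{ACVF}$-to-$\mathrm{VF}$ specialization theorem of their paper; I would cite it rather than reprove it. The remaining work is bookkeeping: collecting terms, clearing denominators into the prescribed form $\prod_k(1-q^{-a_{jk}})$, and merging components of the stratification whose outputs coincide.
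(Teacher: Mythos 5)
Your proposal takes the same route as the paper, which offers no independent proof of this theorem: it is stated as following directly from \cite[Theorem 1.3]{HK}, precisely the black box you invoke, and your remarks on quantifier-free definability, Presburger cell decomposition in the value-group parameter $m$, and refinement of the $V_j$ are the standard bookkeeping implicit in that citation. One small correction: since $\omega$ may vanish, $|\omega|_F$ is not locally a nowhere vanishing multiple of Haar measure; rather one writes $\omega = h\,dx_1\wedge\cdots\wedge dx_d$ in local coordinates and absorbs $\val(h(x))$ into the quantifier-free definable exponent $f_2$.
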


\section{Rational singularities and points over finite local rings} \label{sec:count}
\setcounter{theorem}{0}

In this section, we study the number of points of schemes over finite rings. For this, we introduce the following notation:
\begin{definition} Let $X$ be a scheme of finite type over $\mathbb{Z}$ and let $A$ be a finite ring. If $X_\mathbb{Q}$ is nonempty, define $h_X(A)=\frac{|X(A)|}{|A|^{\dim X_{\mathbb{Q}}}}$. If $X_\mathbb{Q}$ is empty, let $h_X(A)=0$.
\end{definition}
We will mostly consider the finite rings $\mathbb{Z}_q / \mathfrak{m}_q^m$ and $\mathbb{F}_q[t]/t^m$. The following proposition relates the counting functions for these rings.

\begin{proposition} \label{prop:Denef.Loeser} (see \S\S\ref{ssec:DL}) Let $X$ be a scheme of finite type over $\mathbb{Z}$. There is a finite set $S$ of prime numbers such that $h_X(\mathbb{Z}_q / \mathfrak{m}_q^m)=h_X(\mathbb{F}_q[t]/t^m)$, for any $q\in \mathcal{P}_S$ and any $m\in \mathbb{N}$.
\end{proposition}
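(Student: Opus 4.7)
The plan is to reduce the equality of point counts to an equality of measures on $O_F^N$ for local fields of mixed and equal characteristic with the same residue field, and then invoke the uniformity of Denef--Pas integration built into Theorem~\ref{thm:integration.q.f.}, which applies equally to both kinds of fields.

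First I would reduce to the affine case. Since $\mathbb{Z}_q/\mathfrak{m}_q^m$ and $\mathbb{F}_q[t]/t^m$ are local rings, every morphism from their spectrum to $X$ factors through an affine open containing the image of the unique closed point. Covering $X$ by finitely many affine opens inside separated open subschemes (so that all multi-intersections are again affine) and applying inclusion--exclusion, it suffices to treat an affine scheme $X\subset \mathbb{A}^N_{\mathbb{Z}}$ cut out by polynomials $p_1,\ldots,p_M\in\mathbb{Z}[x_1,\ldots,x_N]$. For any non-archimedean local field $F$ with ring of integers $O_F$, maximal ideal $\mathfrak{m}_F$, and residue field $\mathbb{F}_q$, writing $\mu_F$ for the Haar probability measure on $O_F^N$, the reduction map $O_F^N\to(O_F/\mathfrak{m}_F^m)^N$ is a $q^{mN}$-to-one cover whose preimage of $X(O_F/\mathfrak{m}_F^m)$ equals $\{x\in O_F^N:\val(p_j(x))\ge m \text{ for all } j\}$, so that
\[
|X(O_F/\mathfrak{m}_F^m)| \;=\; q^{mN}\cdot \mu_F\bigl(\{x\in O_F^N : f_1(x)\ge m\}\bigr),
\]
where $f_1(x):=\min_j \val(p_j(x))$ is a quantifier-free definable function from $(\mathbb{A}^N)_{\VF}$ to $\VG$ (expressed via finite disjunctions of the comparisons $\val(p_i(x))\le\val(p_j(x))$).

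Next I would write the finite decomposition
\[
\mu_F\bigl(\{f_1\ge m\}\bigr) \;=\; 1 \;-\; \sum_{k=0}^{m-1}\mu_F\bigl(\{f_1=k\}\bigr)
\]
and apply Theorem~\ref{thm:integration.q.f.} to the smooth scheme $\mathbb{A}^N_{\mathbb{Z}}$, the top form $\omega=dx_1\wedge\cdots\wedge dx_N$, the function $f_1$, and $f_2=0$. The theorem produces a finite set $S$ of primes and explicit data $I_j,g_j,V_j,\alpha_j,\beta_j,a_{jl}$ such that, uniformly for every local field $F$ with residue characteristic not in $S$,
\[
\mu_F\bigl(\{f_1=k\}\bigr) \;=\; \sum_{j}\mathbf{1}_{I_j}(k)\,|V_j(\mathbb{F}_q)|\cdot\frac{g_j(k,q)\,q^{\alpha_j k+\beta_j}}{\prod_l(1-q^{-a_{jl}})}.
\]
The right-hand side depends only on $k$ and $q$, not on whether $F$ has mixed or equal characteristic. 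Specializing to $F=\mathbb{Q}_q$ and $F=\mathbb{F}_q((t))$ (both with residue field $\mathbb{F}_q$) therefore gives identical values for each of the finitely many terms, hence identical values of $\mu_F(\{f_1\ge m\})$. Substituting back yields $|X(\mathbb{Z}_q/\mathfrak{m}_q^m)|=|X(\mathbb{F}_q[t]/t^m)|$ for $p\notin S$, and since both rings have cardinality $q^m$, this is exactly $h_X(\mathbb{Z}_q/\mathfrak{m}_q^m)=h_X(\mathbb{F}_q[t]/t^m)$.

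The main obstacle is organizational rather than conceptual: Theorem~\ref{thm:integration.q.f.} describes the measure of an individual level set $\{f_1=k\}$, whereas the count we care about is the measure of the superlevel set $\{f_1\ge m\}$, which a priori is an infinite sum of level sets plus the singular locus $\{f_1=\infty\}=X(O_F)$. Writing $\mu_F(\{f_1\ge m\})$ as a difference of the total mass $1$ and a \emph{finite} sum of level measures sidesteps both the convergence issue and the separate treatment of $\{f_1=\infty\}$. The remaining steps---the reduction to the separated-affine case and the quantifier-free definability of $\min_j \val(p_j)$---are routine once the setup is in place.
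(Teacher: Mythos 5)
Your proposal is in substance the same as the paper's proof in \S\S\ref{ssec:DL}: reduce to the affine case using that a map from the spectrum of a finite local ring factors through an affine open, and in the affine case express $|X(O_F/\mathfrak{m}_F^m)|$ as $q^{mN}$ times the Haar measure of the quantifier-free definable set $\{\min_j\val(p_j(x))\ge m\}$ and invoke the field-independence of the formula in Theorem \ref{thm:integration.q.f.} (the paper packages this as Corollary \ref{cor:motivic.affine}, decomposing the superlevel set as the infinite sum over level sets $n\ge m$; your finite complement trick $\mu(\{f_1\ge m\})=1-\sum_{k<m}\mu(\{f_1=k\})$ is a harmless variant that avoids discussing the convergence/resummation of that series, which the paper needs anyway for later use).

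One point in your reduction is wrong as stated: the parenthetical claim that you can choose finitely many affine opens ``inside separated open subschemes so that all multi-intersections are again affine.'' For a non-separated $X$ this fails --- e.g.\ for the plane with doubled origin, the two natural affine charts are each separated, yet their intersection is $\mathbb{A}^2\smallsetminus\{0\}$, which is not affine --- so your inclusion--exclusion does not immediately terminate in affine pieces. The fix is exactly the paper's two-step induction: intersections of affine opens are affine only inside a \emph{separated} scheme (Lemma \ref{lem:affine.intersection}), so one first proves the statement for separated $X$ by induction on the size of an affine cover using Lemma \ref{lem:simple.h}, and then handles general $X$ by a second induction on the size of a cover by separated (e.g.\ affine) open subschemes, noting that the intersections occurring there are open subschemes of separated schemes and hence covered by the previous case. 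With that adjustment your argument is complete and coincides with the paper's.
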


The main result of this section is the following theorem that generalizes Theorem \ref{thm:intro.count}:

\begin{theorem} \label{thm:size.X} Let $X$ be a scheme of finite type over $\mathbb{Z}$ such that $X_{\mathbb{Q}}$ is equi-dimensional and a local complete intersection. The following are equivalent:
 \begin{enumerate}[(i)]
\item \label{item:lim.1.horz} For any $m$, $\lim_{p \rightarrow \infty} h_X(\mathbb{Z} / p ^m) = \lim_{p \rightarrow \infty} h_X(\mathbb{F}_p[t] / t^m) = 1$.
\item \label{item:rate} There is a finite set $S$ of prime numbers {and a constant C} such that $|h_X(\mathbb{Z}_q / \mathfrak{m}_q ^m)-1|=|h_X(\mathbb{F}_q[t] / t ^m)-1|<C q^{-1/2}$, for any $q\in \mathcal{P}_S$ and $m \in \mathbb{N}$.
\item \label{item:rate.precise} $X_{\overline{\mathbb{Q}}}$ is irreducible and there is a finite set $S$ of prime numbers {and a constant C} such that $|h_X(\mathbb{Z}_q / \mathfrak{m}_q ^m)-h_X(\mathbb{F}_q)|=|h_X(\mathbb{F}_q[t] / t ^m)-h_X(\mathbb{F}_q)|<C q^{-1}$, for any $q\in \mathcal{P}_S$ and $m \in \mathbb{N}$.
\item \label{item:RS} $X_{\overline{\mathbb{Q}}}$ is reduced, irreducible, and has rational singularities.
\end{enumerate}
\end{theorem}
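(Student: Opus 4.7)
The plan is to prove the cycle $\text{(iv)}\Rightarrow\text{(iii)}\Rightarrow\text{(ii)}\Rightarrow\text{(i)}\Rightarrow\text{(iv)}$, using Proposition \ref{prop:Denef.Loeser} throughout to treat $h_X(\mathbb{Z}_q/\mathfrak{m}_q^m)$ and $h_X(\mathbb{F}_q[t]/t^m)$ as interchangeable after discarding finitely many primes. The two easy legs go first: $\text{(ii)}\Rightarrow\text{(i)}$ is immediate by taking $q\to\infty$ with $m$ fixed, while $\text{(iii)}\Rightarrow\text{(ii)}$ follows from the triangle inequality together with the uniform Lang--Weil estimate $|h_X(\mathbb{F}_q)-1|=O(q^{-1/2})$, which applies because $X_\mathbb{Q}$ is absolutely irreducible, reduced, and a local complete intersection (hence geometrically integral) of dimension $d:=\dim X_\mathbb{Q}$.

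The heart of the argument is $\text{(iv)}\Rightarrow\text{(iii)}$, which I attack by the three-step scheme already indicated in the introduction. Set $F(q,m):=h_X(\mathbb{F}_q[t]/t^m)$. \textbf{(a) Boundedness in $m$.} Theorem \ref{thm:push.forward.detailed}, applied to a smooth local presentation of $X$ together with the reduction map $X(\mathbb{Z}_q)\to X(\mathbb{Z}_q/\mathfrak{m}_q^m)$, shows that the push-forward of the Haar measure has continuous density, so $F(q,m)$ is bounded in $m$ for each fixed $q$. \textbf{(b) Control for bounded $m$.} Use $X(\mathbb{F}_q[t]/t^m)=\Jet_{m-1}X(\mathbb{F}_q)$; Mustata's theorem together with the local complete intersection hypothesis implies that $\Jet_{m-1}X_\mathbb{Q}$ is geometrically integral of dimension $md$, so Lang--Weil gives $|F(q,m)-1|=O_m(q^{-1/2})$, which is a fortiori (iii) on any bounded range of $m$. \textbf{(c) Structural formula.} Theorem \ref{thm:integration.q.f.} produces, after expressing $|X(\mathbb{Z}_q/\mathfrak{m}_q^m)|$ as a definable integral, a presentation
\[
F(q,m)=\sum_{j=1}^M \mathbf{1}_{I_j}(m)\,|V_j(\mathbb{F}_q)|\,\frac{g_j(m,q)\,q^{\alpha_j m+\beta_j}}{\prod_k(1-q^{-a_{jk}})}
\]
on finitely many arithmetic progressions $I_j$. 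Combining (a) with (c) forces $\alpha_j\le 0$ for every surviving $j$; the terms with $\alpha_j=0$ depend on $m$ only through its class in finitely many arithmetic progressions, while the terms with $\alpha_j<0$ are $O(q^{-1})$ uniformly in $m$ by Lang--Weil applied to each $V_j$. This yields finitely many values $m_1,\dots,m_k$ such that every $m$ satisfies $|F(q,m)-F(q,m_i)|=O(q^{-1})$ for some $i$; applying (b) to $m_1,\dots,m_k$ and to $m=1$ (where $F(q,1)=h_X(\mathbb{F}_q)$) then gives~(iii).

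For $\text{(i)}\Rightarrow\text{(iv)}$: the $m=1$ limit together with Lang--Weil forces $X_\mathbb{Q}$ to be absolutely irreducible, since otherwise $h_X(\mathbb{F}_p)$ would tend to the positive integer counting top-dimensional geometric components. Reducedness follows because $X_\mathbb{Q}$ is Cohen--Macaulay (being a local complete intersection), so Serre's $S_1$ criterion upgrades generic reducedness to global reducedness, and a multiplicity strictly greater than one along the unique top component would inflate $|X(\mathbb{F}_p[t]/t^m)|/p^{md}$ away from $1$ for $m\ge 2$, contradicting (i). For rational singularities, rewrite the $m$-th limit condition as $|\Jet_{m-1}X_\mathbb{Q}(\mathbb{F}_p)|/p^{md}\to 1$; since $\Jet_{m-1}X_\mathbb{Q}$ is equidimensional of dimension $md$, Lang--Weil again forces absolute irreducibility of $\Jet_{m-1}X_\mathbb{Q}$ for every $m$, and Mustata's theorem then delivers rational singularities of $X_\mathbb{Q}$. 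The main obstacle is the interplay in step (c): identifying which terms of the motivic formula contribute at each of the orders $q^0$, $q^{-1/2}$, and $q^{-1}$ requires uniform Lang--Weil control on each auxiliary variety $V_j$ (whose irreducibility and smoothness over $\mathbb{Z}[S^{-1}]$ is built into Theorem \ref{thm:integration.q.f.}) together with careful bookkeeping of how the residue-class behaviour in $m$ interacts with the inequality $\alpha_j\le 0$ enforced by the FRS boundedness of (a).
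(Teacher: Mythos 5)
Your overall architecture (the cycle, the use of Proposition \ref{prop:Denef.Loeser}, jets plus Mustata and Lang--Weil for (i)$\Rightarrow$(iv), and the boundedness/motivic-formula interplay for (iv)$\Rightarrow$(iii)) matches the paper, but your step (b) contains a genuine gap that undermines exactly the content of condition (iii). For fixed $m$, knowing that $\Jet X_\mathbb{Q}$ is geometrically integral gives, via Lang--Weil, only $|h_X(\mathbb{F}_q[t]/t^m)-1|=O_m(q^{-1/2})$, and this error is in general sharp (already for a smooth geometrically integral $X$, e.g.\ an elliptic curve, $|h_X(\mathbb{F}_q)-1|$ is genuinely of order $q^{-1/2}$ because of Frobenius eigenvalues of odd weight). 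Hence your final assembly, which compares every $m$ to finitely many base values $m_i$ and then compares each $F(q,m_i)$ to $1$, only yields $|h_X(\mathbb{F}_q[t]/t^m)-h_X(\mathbb{F}_q)|=O(q^{-1/2})$ --- that is condition (ii), not the $O(q^{-1})$ comparison to $h_X(\mathbb{F}_q)$ demanded by (iii). Since in your cycle (iii) appears only as a hypothesis (in the leg (iii)$\Rightarrow$(ii)), the equivalence of (iii) with the other conditions is never established. The missing ingredient is the paper's Proposition \ref{prop:weak.rate}: for fixed $m$ one compares $h_X(\mathbb{F}_q[t]/t^m)$ to $h_X(\mathbb{F}_q)$ directly by restricting to the smooth locus $U\subset X$, where the jet projections are affine bundles so $h_U(\mathbb{F}_q[t]/t^m)=h_U(\mathbb{F}_q)$ \emph{exactly}, and then bounding the complements in $X$ and in $\Jet_m X$ (the latter being low-dimensional by Mustata's density statement) via Corollary \ref{cor:h.open.dense}; the weight-$q^{-1/2}$ fluctuations cancel and one gets the required $C(m)q^{-1}$.

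A second, structural omission: your steps (a) and (c) are applied to $X$ as if the motivic formula and the pushforward argument were available for it directly, but Corollary \ref{cor:motivic.affine} requires $X_\mathbb{Q}$ affine, and the boundedness argument needs a \emph{global} complete-intersection presentation $X_\mathbb{Q}\cong\varphi^{-1}(0)$ with $\varphi:Y\to\mathbb{A}^N$ flat and $Y$ smooth affine (Theorem \ref{thm:push.forward.detailed} applies to a morphism of smooth varieties, not to the reduction map $X(\mathbb{Z}_q)\to X(\mathbb{Z}_q/\mathfrak{m}_q^m)$ itself). For a general $X$ whose generic fiber is only a \emph{local} complete intersection, the paper needs an extra induction on a minimal open cover of $X_\mathbb{Q}$ by complete-intersection opens, and because the intersections appearing in the inclusion--exclusion of Lemma \ref{lem:simple.h} need not be complete intersections, it runs a sandwich argument through an auxiliary complete-intersection open $V\subset U\cap X_1$ to bound $h_{X_1\cap U}$ from both sides; your proposal does not address this reduction. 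Finally, a smaller inaccuracy in the same step: the terms with $\alpha_j<0$ are $O(q^{-1})$ only for $m$ beyond some threshold $D$ (Lemma \ref{lem:Pmq}), not uniformly in $m$; this is harmless only because the finitely many small $m$ are handled by the fixed-$m$ proposition --- which, again, must deliver the $O(q^{-1})$ bound rather than your $O(q^{-1/2})$ one.
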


\begin{remark} \label{rem:l.c.i.h.bdd}
The proof of the Theorem \ref{thm:size.X} proves also that, if $X$ itself is local complete intersection, then \eqref{item:RS} implies:
\begin{enumerate}[(i)]
\setcounter{enumi}{4}
\item \label{item:bdd.vert} $X_{\overline{\mathbb{Q}}}$ is irreducible and for any prime power $q$, the sequence $n \mapsto h_X(\mathbb{Z}_q / \mathfrak{m}_q^n)$ is bounded.
\end{enumerate}
We conjecture that this implication is true without this additional assumption. Moreover, we conjecture that \eqref{item:lim.1.horz}-\eqref{item:bdd.vert} are also equivalent to
\begin{enumerate}[(i)]
\setcounter{enumi}{5}
\item \label{item:bdd.vert.weak} $X_{\overline{\mathbb{Q}}}$ is irreducible and there is a finite set $S$ of prime numbers such that, for any prime $p$ not in $S$, the sequence $n \mapsto h_X(\mathbb{Z} / p^n)$ is bounded.
\end{enumerate}
\end{remark}


\begin{remark} One can weaken Condition \eqref{item:rate} by replacing the set $\mathcal{P}_S$ by any sequence $p_n^{m_n}$ of prime powers such that the $p_n$ are distinct and any integral polynomial splits over some finite field of size $p_n^{m_m}$. For example, one can take the sequence of primes that are congruent to 1 modulo $n$.
\end{remark}

\subsection{Counting points and integrals} \label{ssec:count.integral}

\begin{proposition}\label{cor:mu} Let $F$ be a local field, $O$ its ring of integers, and $\mathfrak{m}$ its maximal ideal. Suppose that $Y \rightarrow \Spec O$ is a smooth map of schemes of (pure) relative dimension $d$. For any $m$, denote the map $Y(O) \rightarrow Y(O / \mathfrak{m} ^m)$ by $r_m$. There is a unique Schwartz measure $\mu$ on $Y(O)$ such that, for every $m$ and $y_0\in Y(O / \mathfrak{m} ^m)$,
\[
\mu(r_m ^{-1}(y_0))=|O / \mathfrak{m} |^{-dm}
\]
\end{proposition}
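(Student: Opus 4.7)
The plan is to construct $\mu$ as the canonical measure determined by a local generator of the top differential line bundle $\Omega^d_{Y/O}$, and then verify the normalization by passing to étale coordinates and invoking Hensel's lemma. Uniqueness is immediate from the fact that the clopen sets $r_m^{-1}(y_0)$ form a basis for the topology of $Y(O)$ that is closed under finite intersections, so any two Radon measures assigning the same mass to them agree on all Borel sets.

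For existence, since $Y\to\Spec O$ is smooth of relative dimension $d$, the sheaf $\Omega^d_{Y/O}$ is a line bundle on $Y$, so $Y$ admits a Zariski cover by affines $U_\alpha$ on which $\Omega^d_{Y/O}$ has a generator $\omega_\alpha$. On the open subset $U_\alpha(O)$ of the $p$-adic analytic manifold $Y(O)$, I would set $\mu|_{U_\alpha(O)}:=|\omega_\alpha|_F$, the measure attached to a top form. Any two such generators $\omega_\alpha,\omega_\beta$ on $U_\alpha\cap U_\beta$ differ by a unit $u\in\mathcal{O}_Y(U_\alpha\cap U_\beta)^{\times}$, and at each $y\in(U_\alpha\cap U_\beta)(O)$ the evaluation $u(y)$ lies in $O^{\times}$, so that $|u(y)|_F=1$. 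The local definitions therefore agree on overlaps and glue to a measure on $Y(O)$ that is locally a Haar measure, hence Schwartz (compactness of $Y(O)$ follows from the standard embedding into $O^N$ in the affine case, and from gluing finitely many such pieces in general).

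To verify the normalization, fix $m\geq 1$ and $y_0\in Y(O/\mathfrak{m}^m)$ with $r_m^{-1}(y_0)$ nonempty, and choose a lift $y\in r_m^{-1}(y_0)$. By smoothness, $y$ admits an affine Zariski neighborhood $U$ together with an étale $O$-morphism $\phi=(x_1,\dots,x_d):U\to\mathbb{A}^d_O$. The pulled-back form $\phi^{*}(dx_1\wedge\cdots\wedge dx_d)$ generates $\Omega^d_{U/O}$, so on $U(O)$ the measure $\mu$ equals the pullback along $\phi$ of the Haar measure on $O^d$ normalized by $\vol(O^d)=1$. By the $p$-adic implicit function theorem for étale morphisms, for $m$ sufficiently large (depending on $y$ and $U$) the map $\phi$ sends the residue disk $r_m^{-1}(y_0)$ homeomorphically onto the translate $\phi(y)+(\mathfrak{m}^m)^d$, whose Haar measure equals $q^{-dm}$. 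For arbitrary $m$, write $r_m^{-1}(y_0)$ as the disjoint union of its sub-fibers $r_{m+k}^{-1}(y_0')$ for $k$ large enough; the number of such sub-fibers is $q^{dk}$ (by counting lifts along the reduction of the smooth scheme $Y$), so additivity yields $\mu(r_m^{-1}(y_0))=q^{dk}\cdot q^{-d(m+k)}=q^{-dm}$.

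The main technical point is the gluing step in the existence argument, which reduces to the elementary observation that a regular unit on $Y$ evaluates to a $p$-adic unit at every $O$-point; once this is noted, everything else is standard $p$-adic manifold theory combined with the smoothness of $Y$.
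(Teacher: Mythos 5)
Your proposal is correct and follows essentially the same route as the paper, which simply reduces the proposition to the standard Lemma \ref{lem:Weil.measure} (the computation $\int_{r_m^{-1}(y_0)}|\omega|=|O/\mathfrak{m}|^{-dm}$ for an invertible section $\omega$ of $\Omega^d_{Y/\Spec O}$) and glues over a cover trivializing the line bundle --- exactly your construction, with the lemma's proof spelled out in \'etale coordinates and the gluing justified by the observation that the ratio of two generators is a unit, hence of absolute value $1$ at $O$-points. Two details are worth making explicit: every fiber $r_m^{-1}(y_0)$ is nonempty (smoothness plus Hensel's lemma, using completeness of $O$), without which the asserted normalization could not hold, and in your passage from large $m$ to arbitrary $m$ the level $m+k$ must work simultaneously for all sub-fibers --- this follows from compactness of $r_m^{-1}(y_0)$, or more cleanly from the fact that \'etaleness together with Hensel's lemma gives the bijection between the residue disk and the polydisk $\phi(y)+(\mathfrak{m}^m)^d$ at every level $m\geq 1$, with no threshold needed.
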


The Proposition follows from the following standard lemma:

\begin{lemma} \label{lem:Weil.measure} Let $F,O,\mathfrak{m},Y,d$, and $r_m$ be as in Proposition \ref{cor:mu}. Assume that there is an invertible section $\omega$ of $\Omega_{Y/\Spec O}^d$. Then, for every $y_0\in Y(O / \mathfrak{m} ^m)$,
\[
\int_{r_m ^{-1}(y_0)} | \omega |=|O/ \mathfrak{m} |^{-dm}
\]
\end{lemma}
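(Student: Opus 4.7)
The plan is to reduce to the model case $Y = \mathbb{A}^d_O$ with $\omega$ the standard top form, where the statement is just the Haar volume of a polydisc in $O^d$. The reduction uses \'{e}tale local coordinates together with Hensel's lemma to convert the fiber $r_m^{-1}(y_0)$ into a ball.

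First, since $Y \to \Spec O$ is smooth, the map $r_m$ is surjective, and I can pick a lift $y_1 \in r_m^{-1}(y_0)$. Around its closed-point reduction, smoothness of relative dimension $d$ provides an affine open $V \subset Y$ and regular functions $t_1, \ldots, t_d \in \mathcal{O}_V(V)$ whose differentials freely generate $\Omega^1_{V/O}$; equivalently, $t = (t_1, \ldots, t_d) : V \to \mathbb{A}^d_O$ is \'{e}tale. The fiber $r_m^{-1}(y_0)$ consists of $O$-points with closed-point reduction lying in $V$, and so is contained in $V(O)$. Since $dt_1 \wedge \cdots \wedge dt_d$ is a nowhere-vanishing section of $\Omega^d_{V/O}$, I can write $\omega|_V = u \cdot dt_1 \wedge \cdots \wedge dt_d$ for some unit $u \in \mathcal{O}_V(V)^\times$. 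For $y \in V(O)$, the value $u(y)$ lies in $O^\times$, so $|u|_F \equiv 1$ on $V(O)$, and the measures $|\omega|_F$ and $|dt_1 \wedge \cdots \wedge dt_d|_F$ coincide there.

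The heart of the argument is the Hensel-lifting step. Because $t$ is \'{e}tale, the standard unique-lifting property for \'{e}tale morphisms over a Henselian local ring yields: for every $a \in O^d$ with $a \equiv t(y_1) \pmod{\mathfrak{m}^m}$, there is a unique $y \in V(O)$ satisfying $t(y) = a$ and $y \equiv y_1 \pmod{\mathfrak{m}^m}$. Consequently $t$ induces an $F$-analytic bijection
\[
r_m^{-1}(y_0) \xrightarrow{\ \sim\ } t(y_1) + \mathfrak{m}^m O^d,
\]
and combined with the previous paragraph the integral in question equals the Haar measure of a coset of $\mathfrak{m}^m O^d$ in $O^d$, namely $|O/\mathfrak{m}|^{-dm}$, as required. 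The only delicate point is verifying this global Hensel-type bijection on the whole fiber (rather than a mere local analytic isomorphism near $y_1$); once that is in hand, the rest is a routine combination of smoothness, the trivialization $\omega = u \cdot dt_1 \wedge \cdots \wedge dt_d$, and the Haar volume computation on $O^d$.
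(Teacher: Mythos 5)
Your argument is correct: it is the standard Weil-style computation (\'etale coordinates $t_1,\dots,t_d$, the observation that $|u|_F\equiv 1$ on $V(O)$ for the unit $u$ with $\omega=u\,dt_1\wedge\cdots\wedge dt_d$, and Hensel lifting to identify $r_m^{-1}(y_0)$ with the coset $t(y_1)+\mathfrak{m}^m O^d$), and the ``delicate point'' you flag does hold: uniqueness of lifts along the nilpotent thickening $O/\mathfrak{m}^m\to O/\mathfrak{m}$ for the \'etale map $t$ (applied over the Artinian local, hence Henselian, ring $O/\mathfrak{m}^m$) shows that the Hensel lift $y$ with $t(y)=a$ and $y\equiv y_1\ (\mathrm{mod}\ \mathfrak{m})$ automatically satisfies $y\equiv y_1\ (\mathrm{mod}\ \mathfrak{m}^m)$, which gives both surjectivity and injectivity of the map onto the coset. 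The paper gives no proof of this lemma (it is invoked as ``standard''), so your write-up simply supplies the expected classical argument.
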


\begin{corollary} \label{cor:motivic.affine} Let $Z$ be a scheme of finite type over $\mathbb{Z}$ and assume that $Z_\mathbb{Q}$ is affine. Then there are $M,S,I_j,g_j,V_j,\alpha_j,\beta_j,M_j,a_{j,k}$ as in Theorem \ref{thm:integration.q.f.} such that, for any $q\in \mathcal{P}_{S}$ and any $m \geq 0$,
\begin{equation} \label{eq:h.motivic}
h_Z(\mathbb{Z}_q / \mathfrak{m}_q^m)=h_Z(\mathbb{F}_q[t]/t^m)=\sum_{j=1}^M 1_{I_j}(m) |V_j(\mathbb{F}_q)| \frac{g_j(m,q) q^{\alpha_j m+ \beta_j}}{\prod_{k=1}^{M_j} \left( 1-q^{-a_{jk}} \right)}.
\end{equation}
\end{corollary}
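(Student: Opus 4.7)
The plan is to reduce Corollary \ref{cor:motivic.affine} to a direct application of Theorem \ref{thm:integration.q.f.}. The equality $h_Z(\mathbb{Z}_q/\mathfrak{m}_q^m)=h_Z(\mathbb{F}_q[t]/t^m)$ (for $q\in\mathcal{P}_S$ with $S$ enlarged appropriately) is Proposition \ref{prop:Denef.Loeser}, so it suffices to treat one of the two sides. Since $Z$ is of finite type over $\mathbb{Z}$ with affine generic fiber, there is a nonzero $n\in\mathbb{Z}$ such that $Z[1/n]$ is affine over $\mathbb{Z}[1/n]$; absorbing the prime divisors of $n$ into $S$, I may assume $Z$ is a closed subscheme of some $\mathbb{A}^N_\mathbb{Z}$, cut out by polynomials $p_1,\dots,p_r\in\mathbb{Z}[x_1,\dots,x_N]$. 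The degenerate case $Z=\mathbb{A}^N$ is handled by trivial data, so I assume $Z\subsetneq\mathbb{A}^N$.

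Let $O=\mathbb{Z}_q$, $\mathfrak{m}=\mathfrak{m}_q$, and let $\mu$ be the Haar measure on $O^N$ normalized so that $\mu(O^N)=1$. Each fiber of the reduction $O^N\to(O/\mathfrak{m}^m)^N$ has measure $q^{-Nm}$, and a point $y\in(O/\mathfrak{m}^m)^N$ lies in $Z(O/\mathfrak{m}^m)$ iff any lift $x\in O^N$ satisfies $\val(p_i(x))\geq m$ for every $i$. Hence
\[
h_Z(O/\mathfrak{m}^m)=q^{m(N-\dim Z_\mathbb{Q})}\cdot\mu\bigl(\{x\in O^N:f^\ast(x)\geq m\}\bigr),
\]
where $f^\ast(x):=\min_i\val(p_i(x))$ is a quantifier-free definable function from $\mathbb{A}^N_{\VF}$ to $\VG$. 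Since $Z\subsetneq\mathbb{A}^N$, the set $\{f^\ast=\infty\}=Z(O)$ has $\mu$-measure zero, so $\mu(\{f^\ast\geq m\})=\sum_{k\geq m}\mu(\{f^\ast=k\})$.

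I now apply Theorem \ref{thm:integration.q.f.} with $X=\mathbb{A}^N_\mathbb{Z}$, $\omega=dx_1\wedge\cdots\wedge dx_N$, $f_1=f^\ast$, and $f_2=0$. This produces a finite set of primes $S_0$ (adjoined to $S$) and data $(M',I_j',g_j',V_j',\alpha_j',\beta_j',a_{j,\ell}')$ such that, for every $q\in\mathcal{P}_{S}$ and every $k\in\mathbb{Z}_{\geq 0}$,
\[
\mu(\{f^\ast=k\})=\sum_{j=1}^{M'}\mathbf{1}_{I_j'}(k)\,|V_j'(\mathbb{F}_q)|\,\frac{g_j'(k,q)\,q^{\alpha_j'k+\beta_j'}}{\prod_\ell\bigl(1-q^{-a_{j,\ell}'}\bigr)}.
\]
Summing this identity over $k\geq m$ and multiplying by $q^{m(N-\dim Z_\mathbb{Q})}$ rewrites $h_Z(O/\mathfrak{m}^m)$ as a finite sum, indexed by $j$, of the factor $|V_j'(\mathbb{F}_q)|\,q^{\beta_j'}/\prod_\ell(1-q^{-a_{j,\ell}'})$ multiplied by $q^{m(N-\dim Z_\mathbb{Q})}\sum_{k\in I_j',\,k\geq m}g_j'(k,q)\,q^{\alpha_j'k}$.

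The final step, which is the main technical point, is to recast each inner sum in the shape required by \eqref{eq:h.motivic}. For each arithmetic progression $I_j'$, the truncation $\{k\in I_j':k\geq m\}$ is, as a function of $m$, either empty, a finite set, or an infinite tail whose starting point is an affine function of $m$ on a further arithmetic progression in $m$. In the finite regimes, direct evaluation gives a polynomial in $m$ and $q$ times $q^{\alpha_j'm}$; in the infinite-tail regime, standard identities of the form $\sum_{\ell\geq 0}(m+b\ell)^d q^{-b\ell}=P(m,q)/(1-q^{-b})^{d+1}$ yield a polynomial in $m$ and $q$ times $q^{\alpha_j'm}$, divided by a product of factors $(1-q^{-b})$. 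Partitioning the parameter $m$ into the corresponding sub-progressions and setting $\alpha_j:=\alpha_j'+N-\dim Z_\mathbb{Q}$ absorbs the prefactor $q^{m(N-\dim Z_\mathbb{Q})}$ into the exponent and produces the presentation \eqref{eq:h.motivic}. Verifying positivity of the resulting polynomials $g_j$ on $I_j\times\mathbb{R}_{>1}$ — needed so that the data meets all the conditions of Theorem \ref{thm:integration.q.f.} — is the only delicate bookkeeping, but it can be arranged by further splitting the $I_j$ according to the signs of the leading coefficients.
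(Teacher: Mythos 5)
Your overall strategy is the same as the paper's: after inverting finitely many primes you realize $Z$ as a closed subscheme of $\mathbb{A}^N$ cut out by polynomials $p_1,\dots,p_r$, you write $|Z(O/\mathfrak{m}^m)|=q^{Nm}\mu\bigl(\{x\in O^N \mid \min_i\val(p_i(x))\geq m\}\bigr)$, decompose that set into level sets of the quantifier-free function $f^\ast=\min_i\val(p_i)$, and apply Theorem \ref{thm:integration.q.f.}; the tail-summation bookkeeping you spell out at the end is exactly what the paper leaves implicit in ``the claim now follows from Theorem \ref{thm:integration.q.f.}''. There is, however, one genuine logical problem as written: you invoke Proposition \ref{prop:Denef.Loeser} for the equality $h_Z(\mathbb{Z}_q/\mathfrak{m}_q^m)=h_Z(\mathbb{F}_q[t]/t^m)$, but in the paper that proposition is \emph{deduced from} Corollary \ref{cor:motivic.affine} (its affine case is the base of the induction in \S\ref{ssec:DL}), so citing it here is circular. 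The fix is already inside your argument: Theorem \ref{thm:integration.q.f.} applies uniformly to every local field $F$ with residue field $\mathbb{F}_q$ of characteristic outside $S$, and the right-hand side of \eqref{eq:motivic.integration} depends only on $(m,q)$; so run the computation once with $O=\mathbb{Z}_q$ and once with $O=\mathbb{F}_q[[t]]$ and both counts equal the same closed expression. This uniformity is precisely how the paper gets the two equalities simultaneously, and it is the reason the corollary can then feed into Proposition \ref{prop:Denef.Loeser} rather than the other way around.

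Two smaller points on the final bookkeeping. First, when you sum the tails $\sum_{k\in I_j',\,k\geq m}g_j'(k,q)q^{\alpha_j'k}$ you should also enlarge the common difference of the sub-progressions so that $\alpha_j'$ times the difference is an integer; only then do the geometric denominators come out in the allowed form $1-q^{-a}$ with $a$ a positive integer (stray constant powers of $q$ being absorbed into $\beta_j$), and note that convergence of each tail (all terms are nonnegative and the total is $\mu(\{f^\ast\geq m\})<\infty$) forces $\alpha_j'<0$ for the infinite progressions that actually contribute, which is what legitimizes the summation. Second, positivity of the resulting $g_j$ on $I_j\times\mathbb{R}_{>1}$ is automatic and needs no splitting by signs of leading coefficients: for every real $q>1$ the closed form agrees with the convergent sum of terms that are positive by the positivity of the $g_j'$, and the denominators $\prod(1-q^{-a})$ are positive, so the numerator is positive wherever it is required to be; splitting $I_j$ could not repair a failure of positivity in any case. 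With these adjustments (and a one-line treatment of the trivial cases $Z_\mathbb{Q}=\emptyset$ and $Z=\mathbb{A}^N$), your proof is correct and coincides with the paper's.
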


\begin{proof} If $Z_\mathbb{Q}$ is empty, the claim is clear. Otherwise, let $S_0$ be a finite set of primes and let $f_1\dots f_l \in S_0^{-1}\Z[x_1,\dots,x_N]$ be polynomials such that  $Z_{S_0^{-1}\Z}$ is the scheme defined by $f_1=\dots=f_l=0$ in $\A^N_{S_0^{-1}\Z}.$ Let $F$ be a local field of characteristic not in $S$, $O$ be its ring of integers and $\mathfrak m$ be its maximal ideal. By Lemma \ref{lem:Weil.measure} we have:
\begin{multline*}
|Z(O/\mathfrak m^m)|=|O/\mathfrak m|^{Nm}\int_{\{x\in O^N \mid \min_j(\val(f_j(x))) \geq m\}}|d{x_1} \wedge \dots \wedge d{x_N}|=\\=|O/\mathfrak m|^{Nm} \sum_{n\geq m}\int_{\{x\in O^N \mid \min_j(\val(f_j(x)))=n\}}|d{x_1} \wedge \dots \wedge d{x_N}|.
\end{multline*}
The claim now follows from Theorem \ref{thm:integration.q.f.}.
\end{proof}

\subsection{Proof of Proposition \ref{prop:Denef.Loeser}} \label{ssec:DL}

We will use the following simple lemma.
\begin{lemma} \label{lem:simple.h}
Let $X=U_1\cup U_2$ be an open cover of a scheme. Then, for any finite ring $A$, we have
\begin{itemize}
\item $|X(A)|=|U_1(A)|+|U_2(A)|- |U_1\cap U_2(A)|.$
\item $|X(A)| \geq |U_1(A)|.$
\end{itemize}
\end{lemma}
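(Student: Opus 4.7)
The second inequality $|X(A)| \geq |U_1(A)|$ is immediate: $U_1 \hookrightarrow X$ is an open immersion, hence a monomorphism of schemes, so the induced map $U_1(A) \to X(A)$ is injective for any ring $A$.

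For the inclusion--exclusion identity, my approach is to establish the set-theoretic decomposition $X(A) = U_1(A) \cup U_2(A)$ inside $X(A)$, using the injective maps $U_j(A) \hookrightarrow X(A)$ coming from the open immersions. Inclusion--exclusion for finite sets then yields the formula, once one observes that $U_1(A) \cap U_2(A) = (U_1 \cap U_2)(A)$; this follows from the universal property of fiber products together with the identification $U_1 \cap U_2 = U_1 \times_X U_2$ as subschemes of $X$ (valid because the inclusions are monomorphisms).

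The main step is thus to show that any morphism $\phi : \Spec A \to X$ factors through either $U_1$ or $U_2$. In the intended applications the ring $A$ is local (e.g.\ $\mathbb{Z}_q / \mathfrak{m}_q^m$ or $\mathbb{F}_q[t]/t^m$ as used in Proposition \ref{prop:Denef.Loeser}); in that case $\Spec A$ is topologically a single point $\mathfrak{m}$, whose image lies in some $U_j$. Since $\phi^{-1}(U_j) \subset \Spec A$ is open and contains $\mathfrak{m}$, it equals $\Spec A$, so $\phi$ factors through $U_j$. For a general finite ring, one would use the canonical decomposition $A \cong \prod_i A_i$ into finite local factors, yielding $\Spec A = \coprod_i \Spec A_i$, and apply the local case componentwise. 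There is no substantive obstacle here; the lemma is a short piece of bookkeeping whose only mildly subtle input is the monomorphism property of open immersions.
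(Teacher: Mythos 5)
Your argument for the second bullet (open immersions are monomorphisms, so $U_1(A)\to X(A)$ is injective) and for the first bullet when $A$ is \emph{local} is correct, and the local case is the only one the paper actually uses: in Proposition \ref{prop:Denef.Loeser} the rings are $\mathbb{Z}_q/\mathfrak{m}_q^m$ and $\mathbb{F}_q[t]/t^m$, which are finite local, so $\Spec A$ is a one-point space, every $A$-point of $X$ factors through $U_1$ or $U_2$, and inclusion--exclusion inside $X(A)$ together with $U_1(A)\cap U_2(A)=(U_1\cap U_2)(A)$ gives the identity. The genuine gap is the last step, where you assert that the case of a general finite ring follows ``componentwise'' from $A\cong\prod_i A_i$ with ``no substantive obstacle.'' It does not: the local case gives $|X(A)|=\prod_i|X(A_i)|$ and similarly for $U_1$, $U_2$, $U_1\cap U_2$, but inclusion--exclusion is not compatible with products, since $\prod_i(a_i+b_i-c_i)\neq\prod_i a_i+\prod_i b_i-\prod_i c_i$ in general. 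In fact the first bullet is simply false for non-local finite rings: take $X=\mathbb{P}^1_{\mathbb{Z}}$ with $U_1=\mathbb{P}^1\smallsetminus\{0\}\cong\mathbb{A}^1$, $U_2=\mathbb{P}^1\smallsetminus\{\infty\}\cong\mathbb{A}^1$, so $U_1\cap U_2\cong\mathbb{G}_m$, and $A=\mathbb{F}_p\times\mathbb{F}_p$. Then $|X(A)|=(p+1)^2$, while $|U_1(A)|+|U_2(A)|-|(U_1\cap U_2)(A)|=p^2+p^2-(p-1)^2=p^2+2p-1$; the two sides differ by $2$. The failure comes exactly from $\Spec A$-points one of whose components lands in $U_1\smallsetminus U_2$ and the other in $U_2\smallsetminus U_1$: such a point lies in neither $U_1(A)$ nor $U_2(A)$, so $X(A)\neq U_1(A)\cup U_2(A)$.

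So the correct conclusion is that your proof establishes the lemma in the generality in which it is true and in which the paper uses it, namely the first bullet for finite \emph{local} rings (equivalently, finite rings with one-point spectrum) and the second bullet for arbitrary rings; the phrase ``any finite ring'' in the statement is an overstatement that should be restricted rather than justified by the product decomposition. If you want a statement covering all finite rings, the honest version is the factorwise one, $|X(A)|=\prod_i\left(|U_1(A_i)|+|U_2(A_i)|-|(U_1\cap U_2)(A_i)|\right)$, obtained by applying the local identity to each local factor $A_i$.
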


We also need the following well known fact:
\begin{lemma} (cf. \cite[Ex. II.4.3]{Har}) \label{lem:affine.intersection}
Let $X$ be a separated scheme, and let $U_1,U_2 \subset X$ be open affine subschemes. Then $U_1\cap U_2$ is affine.
\end{lemma}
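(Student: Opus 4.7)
The plan is to use the standard trick with the diagonal morphism. Since $X$ is separated (over $\Spec \Z$, say), the diagonal morphism
\[
\Delta \colon X \to X \times_{\Spec \Z} X
\]
is a closed immersion. This is the only place where separatedness is used.

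Next, I would observe that $U_1 \times_{\Spec \Z} U_2$ is an open affine subscheme of $X \times_{\Spec \Z} X$: writing $U_i = \Spec A_i$, we have $U_1 \times_{\Spec \Z} U_2 = \Spec(A_1 \otimes_{\Z} A_2)$, which is affine because the base $\Spec \Z$ is affine. (If one works over a non-affine base, this step would fail; this is the only mildly delicate point.)

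I would then check that the scheme-theoretic preimage satisfies $\Delta^{-1}(U_1 \times_{\Spec \Z} U_2) = U_1 \cap U_2$, since on underlying sets a point $x \in X$ satisfies $\Delta(x) = (x,x) \in U_1 \times U_2$ iff $x \in U_1$ and $x \in U_2$. Because closed immersions are stable under base change, restricting $\Delta$ gives a closed immersion
\[
U_1 \cap U_2 \;\hookrightarrow\; U_1 \times_{\Spec \Z} U_2.
\]

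Finally, I would invoke the standard fact that a closed subscheme of an affine scheme is affine, which concludes the proof. The argument is essentially formal once separatedness is unpacked as the closed immersion property of $\Delta$; there is no real obstacle beyond being careful that the product $U_1 \times U_2$ is taken over an affine base so that it remains affine.
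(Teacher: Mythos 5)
Your proof is correct and is exactly the standard argument the paper has in mind when it cites \cite[Ex. II.4.3]{Har}: the paper gives no proof of its own, and the intended solution to that exercise is precisely your diagonal trick, with the only point of care being (as you note) that $U_1\times_{\Spec\Z}U_2$ is affine because the base $\Spec\Z$ is affine. Nothing to add.
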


\begin{proof}[Proof of Proposition \ref{prop:Denef.Loeser}] We will show that there is a set $S$ of prime numbers such that $|X(\mathbb{Z}_q/ \mathfrak{m}_q^m)|=|X(\mathbb{F}_q[t]/t^m)|$, for all $q\in \mathcal{P}_S$ and all $m$. We prove this claim in the following cases: \begin{enumerate}[{Case} 1]
\item $X$ is affine. \\
The claim follows from Corollary \ref{cor:motivic.affine}.

\item $X$ is separated.\\
The proof is by induction on the minimal size of an affine cover of $X$. The base is the affine case dicussed above. For the transition, let $X=\bigcup_1^n U_i$ be an affine cover and assume that we proved the claim for schemes that can be covered by less then $n$ affine open subschemes. Let $U= \bigcup_2^n U_i$. By induction and Lemma \ref{lem:affine.intersection}, we know the statement for $U_1, U, U\cap U_1$. Thus Lemma \ref{lem:simple.h} implies the assertion.
\item The general case.\\
The proof is by induction on the minimal size of an affine cover of $X$ by separated schemes. The base is the previous case. The transition is analogous to the proof in the previous case.
\end{enumerate}

\end{proof}


\subsection{Lang--Weil estimates} \label{ssec:LW}

We start with the following notation.

\begin{notation} Suppose that $X$ is a scheme of finite type over $\Spec \mathbb{Z}$ and that $k$ is a field. We denote the number of irreducible components of $X _{\overline{k}}$ that are defined over $k$ and have dimension $\dim X_k$ by $c_X(k)$. If $q$ is a prime power, we also write $c_X(q)$ instead of $c_X(\mathbb{F}_q)$.
\end{notation}

\begin{theorem}[Lang--Weil (cf. \cite{LW})] Let $X$ be a scheme of finite type over $\Spec \mathbb{Z}$. Let $d=\dim X_\mathbb{Q}$. There is a finite set $S$ of prime numbers and a constant $C$ such that, for any $q\in \mathcal{P}_S$,
\[
\left| \frac{|X(\mathbb{F}_q)|}{q^d}-c_X(q)\right| < C q^{-\frac{1}{2}}.
\]
\end{theorem}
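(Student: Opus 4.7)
The plan is to reduce to the classical absolutely irreducible Lang--Weil bound (a geometrically irreducible projective variety of dimension $d$ and bounded degree over $\mathbb{F}_q$, embedded in $\mathbb{P}^N$, has $q^d + O_{N,r,d}(q^{d-1/2})$ $\mathbb{F}_q$-points) and patch the pieces together with a spreading-out argument. Since $|X(\mathbb{F}_q)|$ depends only on $X_{\mathrm{red}}$, and primes $p$ for which $X_{\mathbb{F}_p}$ is empty may be absorbed into $S$, I may assume $X$ is reduced and $X_{\mathbb{F}_p}$ is nonempty for $p\notin S$.

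First I would spread out the geometric irreducible decomposition of $X_{\overline{\mathbb{Q}}}$: after choosing a projective compactification and inverting finitely many primes (enlarging $S$), there is a finite index set $\mathcal{I}$ such that, for every $p\notin S$, the geometric irreducible components of $X_{\overline{\mathbb{F}_p}}$ are indexed by $\mathcal{I}$ compatibly with those of $X_{\overline{\mathbb{Q}}}$, with matching dimensions and with uniformly bounded projective degrees. Enlarging $S$ further, I may also assume that for any distinct top-dimensional components $Y_i, Y_j$, the intersection $Y_i\cap Y_j$ has dimension at most $d-1$ in every fiber. For $q\in\mathcal{P}_S$, the $q$-Frobenius acts on $\mathcal{I}$, and $c_X(q)$ is by definition the number of fixed points of this action on the top-dimensional indices.

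Next I would decompose $|X(\mathbb{F}_q)|$ along the geometric components. For each top-dimensional $Y_i$ that is $\mathrm{Frob}_q$-fixed, $Y_i$ descends to an absolutely irreducible $\mathbb{F}_q$-variety, and the classical Lang--Weil estimate gives $|Y_i(\mathbb{F}_q)|=q^d + O(q^{d-1/2})$ with constant depending only on the (bounded) degree and ambient dimension, hence uniform in $q\in\mathcal{P}_S$. Summing yields a contribution of $c_X(q)\,q^d + O(q^{d-1/2})$. The remaining contributions come from: (a) $\mathbb{F}_q$-points of top-dimensional $Y_i$ with $\mathrm{Frob}_q(Y_i)\neq Y_i$, which necessarily lie in $Y_i\cap \mathrm{Frob}_q(Y_i)$, a variety of dimension $\leq d-1$; (b) points lying on components of dimension $<d$; and (c) double-counting corrections for points on pairwise intersections of distinct top-dimensional components. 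Each of (a)--(c) contributes $O(q^{d-1})$ by crude bounds on varieties of dimension $\leq d-1$, and since $|\mathcal{I}|$ is bounded the total error is $O(q^{d-1/2})$. Dividing by $q^d$ gives the claim. The main obstacle is the uniformity in the spreading-out step: choosing a single finite $S$ and constant $C$ that work for all $q\in\mathcal{P}_S$ requires controlling not only the combinatorics of the geometric-component decomposition fiber-by-fiber, but also the numerical invariants entering each individual Lang--Weil bound, uniformly in $p$.
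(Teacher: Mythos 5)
Your proposal is essentially correct, but note that the paper does not prove this statement at all: it is quoted as the Lang--Weil theorem with a citation to \cite{LW}, and the uniform-in-$q$ form stated here (with the count $c_X(q)$ of top-dimensional $\mathbb{F}_q$-rational geometric components) is exactly the standard strengthening that one extracts from the classical absolutely irreducible bound by the spreading-out argument you describe. So there is no internal proof to compare against; what you have written is the expected derivation: spread out the geometric component decomposition of $X_{\overline{\mathbb{Q}}}$ over $\mathbb{Z}[1/N]$ with uniform degree bounds (EGA IV constructibility), identify $c_X(q)$ with the number of Frobenius-fixed top-dimensional indices (Galois descent over the perfect field $\mathbb{F}_q$), apply classical Lang--Weil to each fixed component, and absorb non-fixed components, lower-dimensional components, and pairwise intersections into $O(q^{d-1})$ via crude degree bounds. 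Two small points deserve more care than your sketch gives them. First, an arbitrary finite-type $\mathbb{Z}$-scheme need not be separated, hence need not admit a projective compactification; you should first reduce to the quasi-projective (e.g.\ affine) case by a finite affine open cover and inclusion--exclusion on $\mathbb{F}_q$-points (every $\mathbb{F}_q$-point factors through some chart, and the pairwise intersections are quasi-affine), or restrict attention to the reduced structure on each component inside a chart; only then do projective degrees make sense. Second, ``primes with $X_{\mathbb{F}_p}=\emptyset$ may be absorbed into $S$'' is only legitimate when $X_{\mathbb{Q}}\neq\emptyset$ (finitely many such primes); when $X_{\mathbb{Q}}=\emptyset$ the inequality is instead trivially true for all large $p$ since both $|X(\mathbb{F}_q)|$ and $c_X(q)$ vanish. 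With these adjustments your argument is complete and matches the intended content of the cited result.
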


\subsection{Irreducible components modulo $p$} \label{ssec:Irr.comp}

In this subsection, we prove the following

\begin{proposition} \label{prop:Che} Suppose that $X$ is a scheme of finite type over $\Spec \mathbb{Z}$. Assume that $X_{\mathbb{Q}}$ is irreducible. Then \begin{enumerate}
\item\label{prop:Che:1} For almost all prime numbers $p$, we have $c_X(\overline{\mathbb{F}_p})=c_X(\overline{\mathbb{Q}})$.
\item\label{prop:Che:2} There are infinitely many prime numbers $p$ such that $c_X(p)=c_X(\overline{\mathbb{Q}})$.
\end{enumerate}
\end{proposition}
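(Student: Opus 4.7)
The plan is to realize $c_X(\overline{\Q})$ as the degree of an auxiliary number field ``of constants'', spread this structure out over a cofinite subset of $\Spec\Z$, and then appeal to Chebotarev's density theorem for part \eqref{prop:Che:2}. First I would replace $X$ by the reduction of the scheme-theoretic closure of $X_\Q$ in $X$: this operation discards nilpotents and ``vertical'' components supported at only finitely many primes, so it does not change $c_X(k)$ for $k$ the residue field of any $p$ outside a finite exceptional set. After this reduction $X$ is an integral scheme, flat over $\Spec\Z$, with integral generic fiber. Let $K$ denote the function field of $X_\Q$ and let $L$ be the algebraic closure of $\Q$ in $K$; since $K/\Q$ is finitely generated, $L/\Q$ is a finite extension, and by construction $X_\Q$ is geometrically irreducible when viewed as an $L$-variety. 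The geometric irreducible components of $X_{\overline{\Q}}$ are in $\Gal(\overline{\Q}/\Q)$-equivariant bijection with $\Hom_\Q(L,\overline{\Q})$, they all have dimension $\dim X_\Q$, and their number is $N := [L:\Q] = c_X(\overline{\Q})$.

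Next I would spread out. After replacing $X$ by its normalization in $K$ (which is finite birational over $X$ and so does not change the set of top-dimensional geometric components of the $\overline{\F_p}$-fibers outside finitely many primes), I may assume $X$ is normal, so that the inclusion $L\hookrightarrow K$ produces a morphism $X_\Q\to\Spec L$ with geometrically irreducible fibers. Using standard constructibility in families (EGA IV, \S 9), I would find a finite set $S$ of primes such that, with $R := \Z[1/\prod_{p\in S}p]$: (a) $\mathcal O_L\otimes_\Z R$ is \'etale over $R$ of rank $N$; (b) the morphism $X_\Q\to\Spec L$ extends to $\pi\colon X_R\to\Spec(\mathcal O_L\otimes R)$; (c) $\pi$ is flat of relative dimension $d := \dim X_\Q$ with geometrically irreducible fibers. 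For any prime $p\notin S$, the scheme $\Spec(\mathcal O_L\otimes\overline{\F_p})$ then consists of exactly $N$ reduced points, and each fiber of $\pi$ over these is a top-dimensional geometric component of $X_{\overline{\F_p}}$. Hence $c_X(\overline{\F_p})=N=c_X(\overline{\Q})$ for all $p\notin S$, establishing \eqref{prop:Che:1}.

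For \eqref{prop:Che:2}, observe that a geometric component of $X_{\overline{\F_p}}$ is defined over $\F_p$ if and only if the corresponding $\overline{\F_p}$-point of $\Spec(\mathcal O_L\otimes\overline{\F_p})$ is $\F_p$-rational; since $\mathcal O_L\otimes R$ is \'etale of rank $N$ over $R$, this occurs for all $N$ points simultaneously if and only if $p$ splits completely in $L/\Q$. Chebotarev's density theorem applied to the Galois closure of $L$ produces infinitely many (in fact a positive density of) such primes, yielding $c_X(\F_p)=N$ for infinitely many $p$. The principal obstacle is the verification of (a)--(c) in the spreading-out step: one must ensure that $\pi$ has geometrically irreducible fibers over \emph{every} closed point of $\Spec(\mathcal O_L\otimes R)$, not only the generic one, which follows from generic geometric irreducibility combined with Chevalley's constructibility theorem, together with the bookkeeping around the normalization step to see that passing from $X$ to its normalization genuinely preserves the component count on the special fibers outside the exceptional set.
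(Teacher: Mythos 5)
Your proposal is correct. It shares the same arithmetic skeleton as the paper's argument (a finite extension $L$ of $\Q$ capturing the geometric components, an EGA IV \S 9 spreading-out statement to handle almost all primes, and Chebotarev at completely split primes), but the mechanism by which you transport the component structure to the special fibers is different. The paper takes \emph{any} Galois $L/\Q$ over which the components of $X_L$ become absolutely irreducible, base changes to $\Spec \mathcal{O}_L$ without modifying $X$ at all, and invokes EGA IV 9.7.8: for almost all primes $\mathfrak{p}$ the specialization relation is a bijection between components of the generic and closed fibers, and this bijection is equivariant for the decomposition group acting through $\Phi_{\mathfrak p}\colon D_{\mathfrak p}\to\Gal(k_{\mathfrak p}/\F_p)$ (Proposition \ref{prop:inv.rel}); part \eqref{prop:Che:1} is then a count, and part \eqref{prop:Che:2} follows by choosing, via Chebotarev, infinitely many completely split $p$, for which $k_{\mathfrak p}=\F_p$ and the Frobenius action on components is trivial. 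You instead pin down $L$ intrinsically as the field of constants, so that $c_X(\overline{\Q})=[L:\Q]$, and build an actual morphism $\pi\colon X\to\Spec\mathcal{O}_L$ (a Stein-type factorization) with geometrically irreducible fibers after shrinking, so that the geometric components of the special fibers are literally the fibers of $\pi$ over $\Spec(\mathcal{O}_L\otimes\overline{\F_p})$ and ``defined over $\F_p$'' translates into ``$\F_p$-rational point of an \'etale $R$-algebra,'' i.e.\ complete splitting of $p$ in $L$. What your route buys is this very concrete dictionary and the identity $c_X(\overline{\Q})=[L:\Q]$; what it costs is the preliminary surgery (passing to the reduced closure of $X_\Q$, then normalizing so that $\mathcal{O}_L$ maps into the structure sheaf) and the attendant verification---which you correctly flag and which does go through, since the non-normal locus and the discarded vertical/embedded pieces meet the fibers in dimension $<\dim X_\Q$ for all but finitely many $p$, so the top-dimensional fiber components and their fields of definition are unchanged. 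The paper's version avoids touching $X$ and gets both parts directly from the cited EGA statement plus the evident Galois equivariance, at the price of working with an abstract specialization bijection rather than an explicit fibration.
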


In order to prove the proposition, we recall the following notions and facts:

\begin{definition} Let $S$ be an irreducible scheme with generic point $\eta$, let $s$ be a closed point of $S$, and let $ \varphi :X \rightarrow S$ be a morphism of finite type. Define a relation $\mathfrak{R}_{S,s}$ between the set of irreducible components of $X_\eta :=\varphi ^{-1} (\eta)$ and the set of irreducible components of $X_s:=\varphi ^{-1} (s)$ by $(Y_1,Y_2)\in \mathfrak{R}_{S,s}$ if $Y_2 \subset \overline{Y_1}$.
\end{definition}

\begin{theorem} \label{thm:EGA.9.7.8} (cf. \cite[Proposition 9.7.8]{EGA4}) Let $S$ be an irreducible scheme with generic point $\eta$ and let $ \varphi :X \rightarrow S$ be a morphism of finite type. Assume that all irreducible components of $X_\eta$ are absolutely irreducible. There is an open set $U \subset S$ such that, for any closed point $s\in U$, \begin{enumerate}
\item All irreducible components of $X_s$ are absolutely irreducible.
\item The relation $\mathfrak{R}_{\eta,s}$ is the graph of a bijection.
\end{enumerate}
\end{theorem}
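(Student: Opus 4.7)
The plan is to pass to a finite Galois extension $L/\mathbb{Q}$ over which the top-dimensional geometric components of $X_{\overline{\mathbb{Q}}}$ are individually defined, then derive Part (1) from Theorem~\ref{thm:EGA.9.7.8} by spreading out over $\Spec O_L[1/N]$, and derive Part (2) from the Chebotarev density theorem applied to the action of $\Gal(L/\mathbb{Q})$ on these components.

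Set $c = c_X(\overline{\mathbb{Q}})$ and $d = \dim X_\mathbb{Q}$. Since $X_\mathbb{Q}$ is irreducible, the top-dimensional geometric components of $X_{\overline{\mathbb{Q}}}$ form a single $\Gal(\overline{\mathbb{Q}}/\mathbb{Q})$-orbit $\{X_1,\ldots,X_c\}$ of equidimensional subvarieties. I would choose a finite Galois extension $L/\mathbb{Q}$ over which each $X_i$ is individually defined, and then spread out: pick $N\in\mathbb{Z}_{\geq 1}$ so that $X$ extends to a scheme $\mathcal{X}$ over $O_L[1/N]$, each $X_i$ extends to a closed subscheme $\mathcal{X}_i\subset \mathcal{X}$ flat over $O_L[1/N]$ with $\mathcal{X}_i\otimes L=X_i$, and the (possibly empty) lower-dimensional generic components likewise spread out to a closed subscheme avoiding the $X_i$ at the generic point.

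For Part (1), apply Theorem~\ref{thm:EGA.9.7.8} to $\mathcal{X}\to\Spec O_L[1/N]$: there is a dense open $U\subset\Spec O_L[1/N]$ such that for every closed point $\mathfrak{p}\in U$ the relation $\mathfrak{R}_{\eta,\mathfrak{p}}$ is the graph of a bijection between the geometric components of the generic and special fibers; restricting to top-dimensional components then yields exactly $c$ top-dimensional geometric components in $\mathcal{X}_\mathfrak{p}$, namely the spreadings $(\mathcal{X}_i)_\mathfrak{p}$. Since $\mathcal{X}\times_{O_L[1/N]}\overline{\mathbb{F}_p}\cong X_{\overline{\mathbb{F}_p}}$ for any $\mathfrak{p}\mid p$, this gives $c_X(\overline{\mathbb{F}_p})=c$ for almost all primes $p$.

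For Part (2), the group $\Gal(L/\mathbb{Q})$ acts on $\{X_1,\ldots,X_c\}$. For any prime $p$ unramified in $L$ and lying in the good set from Part (1), the bijection supplied by Theorem~\ref{thm:EGA.9.7.8} is Frobenius-equivariant: the action of the arithmetic Frobenius of $\overline{\mathbb{F}_p}/\mathbb{F}_p$ on the top-dimensional geometric components of $X_{\overline{\mathbb{F}_p}}$ corresponds to the action of any Frobenius element $\Frob_\mathfrak{p}\in\Gal(L/\mathbb{Q})$ for $\mathfrak{p}\mid p$. Hence $c_X(p)=c$ precisely when $\Frob_\mathfrak{p}$ fixes every $X_i$. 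By the Chebotarev density theorem, infinitely many primes $p$ split completely in $L$, giving $\Frob_\mathfrak{p}=1$ and therefore $c_X(p)=c$. The main technical point is not a deep obstacle but a careful bookkeeping one, namely verifying that the specialization bijection of Theorem~\ref{thm:EGA.9.7.8} is genuinely $\Gal(\overline{\mathbb{F}_p}/\mathbb{F}_p)$-equivariant so that the match with the action of $\Frob_\mathfrak{p}$ on $\{X_1,\ldots,X_c\}$ is correct; this is standard spreading-out/descent, but it is what lets the Chebotarev step land in the right conjugacy class.
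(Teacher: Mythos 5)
Your proposal does not prove the statement it is supposed to prove. The statement is Theorem \ref{thm:EGA.9.7.8} itself: for an \emph{arbitrary} irreducible base scheme $S$ with generic point $\eta$ and any finite-type morphism $\varphi\colon X\to S$ whose generic fiber has only absolutely irreducible components, there is an open $U\subset S$ over whose closed points the fibers again have only absolutely irreducible components and the relation $\mathfrak{R}_{\eta,s}$ is the graph of a bijection. Your argument invokes exactly this theorem as a black box (``apply Theorem \ref{thm:EGA.9.7.8} to $\mathcal{X}\to\Spec O_L[1/N]$''), so as a proof of it, it is circular. What you have actually written is a proof of Proposition \ref{prop:Che} --- that $c_X(\overline{\mathbb{F}_p})=c_X(\overline{\mathbb{Q}})$ for almost all $p$ and $c_X(p)=c_X(\overline{\mathbb{Q}})$ for infinitely many $p$ --- and your Galois-equivariance and Chebotarev steps are essentially the paper's own deduction of that proposition from Theorem \ref{thm:EGA.9.7.8} together with Proposition \ref{prop:inv.rel}. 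Note also that the theorem concerns an arbitrary irreducible scheme $S$, with no arithmetic structure available and with all components of the fibers (not only top-dimensional ones), so a Galois/Chebotarev argument cannot even be formulated in the required generality.

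In the paper this theorem is not proved; it is quoted from \cite[Proposition 9.7.8]{EGA4}. If you want an argument rather than a citation, the missing content is a spreading-out/constructibility argument over the base: let $Z_1,\dots,Z_r$ be the closures in $X$ of the irreducible components $Y_1,\dots,Y_r$ of $X_\eta$. By constructibility of the locus of geometrically irreducible fibers (EGA IV 9.7.7), each set $\left\{ s\in S \mid (Z_i)_s \textrm{ is geometrically irreducible} \right\}$ is constructible and contains $\eta$, hence contains a dense open; by Chevalley's theorem the image of $X\smallsetminus\bigcup Z_i$ misses $\eta$ and hence misses a dense open, so over it $X_s=\bigcup_i (Z_i)_s$; and by constructibility of fiber dimension one can shrink further so that $\dim (Z_i)_s=\dim Y_i$ while $\dim (Z_i\cap Z_j)_s\leq \dim (Y_i\cap Y_j)<\dim Y_i$ for $i\neq j$, which rules out containments among the $(Z_i)_s$ and shows that the irreducible components of $X_s$ are precisely the $(Z_i)_s$, with $\mathfrak{R}_{\eta,s}$ the graph of the bijection $Y_i\mapsto (Z_i)_s$. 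None of this appears in your write-up.
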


Let $X \rightarrow \Spec \mathbb{Z}$ be a scheme of finite type. Let $L/ \mathbb{Q}$ be a Galois extension such that all irreducible components of $X \times \Spec L$ are absolutely irreducible, and let $O$ be the ring of integers of $L$. The group $\Gal(L/ \mathbb{Q})$ acts on the set of irreducible components of $X \times \Spec L$. For any prime ideal $\mathfrak{p}$ of $O$ with residue field $k_\mathfrak{p}$, consider the decomposition group $D_\mathfrak{p}=\left\{ \sigma \in \Gal(L/ \mathbb{Q}) \mid \sigma(\mathfrak{p})=\mathfrak{p} \right\}$, and the homomorphism $\Phi_\mathfrak{p} :D_\mathfrak{p} \rightarrow \Gal(k_\mathfrak{p} / \mathbb{F}_p)$. The group $\Gal(k_\mathfrak{p}/ \mathbb{F}_p)$ acts on the set of irreducible components of $X_{k_\mathfrak{p}}$. The following is obvious

\begin{proposition} \label{prop:inv.rel} Under the assumptions above, for any $(Y_1,Y_2)\in \mathfrak{R}_{(0), \mathfrak{p}}$ and $\sigma \in D_\mathfrak{p}$, we have $(\sigma \cdot Y_1, \Phi_\mathfrak{p}(\sigma) \cdot Y_2)\in \mathfrak{R}_{(0), \mathfrak{p}}$.
\end{proposition}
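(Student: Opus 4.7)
The plan is to exhibit a single automorphism $\widetilde{\sigma}$ of the total space $X_O := X \times_{\Spec \mathbb{Z}} \Spec O$ that simultaneously realizes the action of $\sigma$ on components of the generic fiber and the action of $\Phi_\mathfrak{p}(\sigma)$ on components of the fiber over $\mathfrak{p}$, and then to deduce the claim from the fact that a homeomorphism commutes with Zariski closures.

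For the construction, I would observe that any $\sigma \in \Gal(L/\mathbb{Q})$ restricts to a ring automorphism of the ring of integers $O$, hence induces an automorphism $\Spec(\sigma): \Spec O \to \Spec O$ of $\mathbb{Z}$-schemes. Base-changing the identity on $X$ along $\Spec(\sigma)$ produces an automorphism $\widetilde{\sigma} := \Id_X \times \Spec(\sigma)$ of $X_O$ over $\Spec \mathbb{Z}$. By construction, its restriction to the generic fiber $X_L$ agrees with the given action of $\sigma$ on components, sending $Y_1$ to $\sigma \cdot Y_1$. When $\sigma \in D_\mathfrak{p}$, the map $\Spec(\sigma)$ fixes the closed point $\mathfrak{p}$ and induces the automorphism $\Phi_\mathfrak{p}(\sigma)$ on the residue field $k_\mathfrak{p}$, by the very definition of $\Phi_\mathfrak{p}$; consequently $\widetilde{\sigma}$ preserves the fiber $X_{k_\mathfrak{p}} \subset X_O$ and acts there by $Y_2 \mapsto \Phi_\mathfrak{p}(\sigma) \cdot Y_2$.

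To conclude, view $Y_1 \subset X_L$ and $Y_2 \subset X_{k_\mathfrak{p}}$ as locally closed subsets of $X_O$; then the relation $(Y_1,Y_2) \in \mathfrak{R}_{(0),\mathfrak{p}}$ amounts to $Y_2 \subset \overline{Y_1}$, with the closure taken inside $X_O$. Since $\widetilde{\sigma}$ is a scheme automorphism, it is in particular a homeomorphism of the underlying topological space and therefore commutes with closures. Applying it gives
\[
\Phi_\mathfrak{p}(\sigma) \cdot Y_2 \;=\; \widetilde{\sigma}(Y_2) \;\subset\; \widetilde{\sigma}(\overline{Y_1}) \;=\; \overline{\widetilde{\sigma}(Y_1)} \;=\; \overline{\sigma \cdot Y_1},
\]
which is exactly the required membership $(\sigma \cdot Y_1,\, \Phi_\mathfrak{p}(\sigma) \cdot Y_2) \in \mathfrak{R}_{(0),\mathfrak{p}}$.

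There is no serious obstacle here; the content is the functoriality of base change together with the topological invariance of closures under scheme automorphisms. The one point requiring care is bookkeeping: $\sigma$ plays two roles, acting on $X_L$ as an element of $\Gal(L/\mathbb{Q})$ and on $X_{k_\mathfrak{p}}$ via its image $\Phi_\mathfrak{p}(\sigma) \in \Gal(k_\mathfrak{p}/\mathbb{F}_p)$, and packaging these two actions as the restrictions of the single automorphism $\widetilde{\sigma}$ of $X_O$ is what makes the compatibility transparent.
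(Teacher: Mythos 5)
Your proof is correct: the paper states this proposition without proof (it is declared obvious), and your argument via the single automorphism $\Id_X \times \Spec(\sigma)$ of $X_O$, which restricts to the Galois action on the generic fiber and to the $\Phi_\mathfrak{p}(\sigma)$-action on the fiber over $\mathfrak{p}$, together with the fact that a homeomorphism commutes with closures, is exactly the natural justification the authors intend.
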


Proposition \ref{prop:inv.rel}, Theorem \ref{thm:EGA.9.7.8}, and Chebotarev's density theorem imply Proposition \ref{prop:Che}.

The Lang--Weil bounds and Proposition \ref{prop:Che}\eqref{prop:Che:1} imply:

\begin{corollary} \label{cor:h.open.dense} Let $X$ be a scheme of finite type over $\Spec \mathbb{Z}$, and let $U \subset X$ be an open dense subscheme. There is finite set $S$ of prime numbers and a constant $C$ such that $h_X(\mathbb{F}_q)-h_U(\mathbb{F}_q)< C q ^{-1}$ for any $q\in \mathcal{P}_S$.
\end{corollary}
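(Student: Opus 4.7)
The plan is to write $Z := X \setminus U$ with its reduced closed subscheme structure, so that since $U$ is open we have the disjoint union $X(\mathbb{F}_q) = U(\mathbb{F}_q) \sqcup Z(\mathbb{F}_q)$, and hence
\[
h_X(\mathbb{F}_q) - h_U(\mathbb{F}_q) \;=\; \frac{|Z(\mathbb{F}_q)|}{q^{\dim X_{\mathbb{Q}}}}.
\]
It therefore suffices to bound $|Z(\mathbb{F}_q)|$ by $O(q^{\dim X_\mathbb{Q} - 1})$, uniformly in $q\in \mathcal{P}_S$ for some finite set $S$ of primes.

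The case $X_{\mathbb{Q}} = \emptyset$ is trivial from the definition of $h_X$, so I would assume $X_{\mathbb{Q}}$ is nonempty. Because $U$ is open and dense in $X$, it contains every generic point of $X$, and in particular it contains every generic point of $X_{\mathbb{Q}}$. Consequently $Z_{\mathbb{Q}}$ is a closed subscheme of $X_{\mathbb{Q}}$ that misses every generic point of $X_{\mathbb{Q}}$, giving $\dim Z_{\mathbb{Q}} \leq \dim X_{\mathbb{Q}} - 1$.

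Now I would apply the Lang--Weil bound (stated in \S\ref{ssec:LW}) to $Z$: there exist a finite set $S_1$ of primes and a constant $C_1$ such that for every $q \in \mathcal{P}_{S_1}$,
\[
|Z(\mathbb{F}_q)| \;\leq\; c_Z(q)\, q^{\dim Z_{\mathbb{Q}}} + C_1\, q^{\dim Z_{\mathbb{Q}} - 1/2}.
\]
To control the leading coefficient, I would use Proposition \ref{prop:Che}\eqref{prop:Che:1} applied separately to the (finitely many) irreducible components of $Z_{\mathbb{Q}}$ of maximal dimension: enlarging $S_1$ to a finite set $S$ if necessary, the quantity $c_Z(\overline{\mathbb{F}_p})$ equals the number of top-dimensional absolutely irreducible components of $Z_{\overline{\mathbb{Q}}}$ for all $p\notin S$, and hence $c_Z(q) \leq c_Z(\overline{\mathbb{F}_q})$ is bounded by a constant $C_0$ independent of $q$.

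Combining these inputs with $\dim Z_{\mathbb{Q}} \leq \dim X_{\mathbb{Q}} - 1$ yields $|Z(\mathbb{F}_q)| \leq C\, q^{\dim X_{\mathbb{Q}} - 1}$ for some constant $C$ and all $q \in \mathcal{P}_S$, which gives the required estimate. There is no substantive obstacle: all the work is already packaged in the Lang--Weil bound and in Proposition \ref{prop:Che}, and the only point requiring a small verification is the elementary dimension drop $\dim Z_{\mathbb{Q}} < \dim X_{\mathbb{Q}}$, which follows from the density of $U$ in $X$ together with the description of the generic points of $X_{\mathbb{Q}}$ as the generic points of those components of $X$ dominating $\Spec \mathbb{Z}$.
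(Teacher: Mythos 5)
Your proposal is correct and follows essentially the same route the paper intends: the corollary is stated there as a direct consequence of the Lang--Weil bounds and Proposition \ref{prop:Che}\eqref{prop:Che:1}, and the only way to get the exponent $q^{-1}$ is exactly your reduction to the complement $Z=X\smallsetminus U$, whose generic fiber has dimension at most $\dim X_{\mathbb{Q}}-1$, with $c_Z(q)$ bounded uniformly via Proposition \ref{prop:Che}. The small points you leave implicit (taking closures of the components of $Z_{\mathbb{Q}}$ before applying Proposition \ref{prop:Che}, and discarding the finitely many primes carrying vertical components or where $\dim Z_{\mathbb{F}_p}>\dim Z_{\mathbb{Q}}$) are routine spreading-out and do not constitute a gap.
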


\subsection{Proof of the implication $\eqref{item:rate.precise} \implies \eqref{item:rate}$} \label{ssec:deprecise}
It is enough to show that there is a finite set $S'$ of prime numbers and a constant $C'$ such that $|h_X(\mathbb{F}_q)-1|<C'q^{-1/2}$ for all $q\in \mathcal{P}_{S'}$. By the assumption and Proposition \ref{prop:Che}\eqref{prop:Che:1}, there is a finite set $T$ of prime numbers such that $c_X(q)=1$ for all $q\in \mathcal{P}_T$. The Lang-Weil estimates imply the result.

\subsection{Proof of the implication $\eqref{item:lim.1.horz} \implies \eqref{item:RS}$} \label{ssec:horz.to.RS}

Let $m\in \mathbb{N}$ and $X^{(m)}=\Jet_m(X/\Spec \mathbb{Z})$. There is a finite set $S$ of prime numbers such that $X_{\mathbb{F}_p}$ is a local complete intersection and $\dim X^{(m)}_{\mathbb{F}_p} = \dim X^{(m)}_\mathbb{Q}$ for all $p\notin S$. By Proposition \ref{prop:Che}\eqref{prop:Che:1}, we can enlarge $S$ and assume, in addition, that $c_{X^{(m)}}(\overline{\mathbb{F}_p})=c_{X^{(m)}}(\overline{\mathbb{Q}})$ for all $p\notin S$. By the Lang--Weil estimates, we get that
\[
 \left| \frac{h_X(\mathbb{F}_p[t] / t^m)}{p^{\dim X^{(m)}_{\mathbb{F}_q}-m\dim X_\mathbb{Q} }}-c_{X^{(m)}}(p) \right| =\left| \frac{|X^{(m)}(\mathbb{F}_p)|}{p^{\dim X_{\mathbb{F}_p}^{(m)}}} -c_{X^{(m)}}(p)\right| \underset{p \rightarrow \infty}{\longrightarrow} 0,
\]
By Proposition \ref{prop:Che}\eqref{prop:Che:2}, there are infinitely many primes $p$ such that $c_{X^{(m)}}(p)=c_{X^{(m)}}(\overline{\mathbb{Q}})$. Taking the limit over those and using the assumption \eqref{item:lim.1.horz}, we get that $p^{m\dim X_\mathbb{Q} - \dim X^{(m)}_{\mathbb{F}_q}} \rightarrow c_{X^{(m)}}(\overline{\mathbb{Q}})$, which implies that $\dim X^{(m)}_\mathbb{Q} = m\dim X_\mathbb{Q}$ and that $c_{X^{(m)}}(\overline{\mathbb{Q}})=1$. If $X_\mathbb{Q}$ were not reduced, then $\dim X^{(2)}_\mathbb{Q}=\dim TX_\mathbb{Q}$ would be larger than $2\dim X_\mathbb{Q}$, a contradiction. Hence, $X_\mathbb{Q}$ is reduced. Since $X$ is a local complete intersection, each irreducible component of $X^{(m)}_\mathbb{Q}$ has dimension at least $m \cdot \dim X_\mathbb{Q}$. Thus, $X^{(m)}_\mathbb{Q}$ is absolutely irreducible. By Mustata's theorem, $X$ has rational singularities.

\subsection{The implication $\eqref{item:RS} \implies \eqref{item:rate.precise}$} \label{ssec:RS.to.rate.precise}
If $X_\mathbb{Q}$ is empty, the claim is trivial. Hence, in the following, we will assume that $X_\mathbb{Q}$ is non-empty.

\subsubsection{The case of fixed $m$}
\begin{proposition} \label{prop:weak.rate} Assume that $X$ satisfies condition \eqref{item:RS}. Then, for any $m$, there is a finite set $S(m)$ of prime numbers and a constant $C(m)$ such that $|h_X(\mathbb{F}_q[t]/t^m)-h_X(\mathbb{F}_q)|<C(m)q ^{-1}$ for all $q\in \mathcal{P}_{S(m)}$.
\end{proposition}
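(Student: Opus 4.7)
The plan is to count $X(\mathbb{F}_q[t]/t^m)$ by summing over the fibers of the truncation map to $X(\mathbb{F}_q)$, isolating the contribution of the smooth locus. Write $d = \dim X_\mathbb{Q}$, let $J := \Jet_{m-1}(X/\Spec \mathbb{Z})$ so that $J(\mathbb{F}_q) = X(\mathbb{F}_q[t]/t^m)$, and let $\pi : J \to X$ denote evaluation at $t = 0$, whose effect on $\mathbb{F}_q$-points is the truncation map $X(\mathbb{F}_q[t]/t^m) \to X(\mathbb{F}_q)$. Let $X^{\mathrm{sm}} \subset X$ be the smooth locus of $X \to \Spec \mathbb{Z}$, and $X^{\mathrm{sing}}$ its closed complement. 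The key geometric observation is that the fiber of $\pi$ over any point of $X^{\mathrm{sm}}(\mathbb{F}_q)$ has exactly $q^{(m-1)d}$ elements, whereas $\pi^{-1}(X^{\mathrm{sing}})$ is a proper closed subscheme of the irreducible scheme $J_\mathbb{Q}$ and hence has dimension at most $md - 1$.

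First I would verify the smooth-fiber count: for $x \in X^{\mathrm{sm}}(\mathbb{F}_q)$, the completed local ring $\widehat{\mathcal{O}}_{X_{\mathbb{F}_q}, x}$ is isomorphic to $\mathbb{F}_q[[y_1, \ldots, y_d]]$ by smoothness, and the algebra homomorphisms to $\mathbb{F}_q[t]/t^m$ that lift the residue map are determined by independent choices $\phi(y_i) \in t\,\mathbb{F}_q[t]/t^m$, giving exactly $q^{(m-1)d}$ lifts. This produces the decomposition
$$|J(\mathbb{F}_q)| \;=\; q^{(m-1)d}\,|X^{\mathrm{sm}}(\mathbb{F}_q)| \;+\; |\pi^{-1}(X^{\mathrm{sing}})(\mathbb{F}_q)|.$$
Next, since $X_\mathbb{Q}$ is reduced, irreducible, l.c.i., and has rational singularities, Mustata's theorem implies that $J_\mathbb{Q}$ is irreducible. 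The open subscheme $\pi^{-1}(X^{\mathrm{sm}}_\mathbb{Q})$ is nonempty (since $X^{\mathrm{sm}}_\mathbb{Q}$ is dense in $X_\mathbb{Q}$) and has dimension $md$, so $\dim J_\mathbb{Q} = md$ and $\dim \pi^{-1}(X^{\mathrm{sing}}_\mathbb{Q}) \leq md - 1$; similarly $\dim X^{\mathrm{sing}}_\mathbb{Q} \leq d - 1$.

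Finally, I would apply Proposition \ref{prop:Che}\eqref{prop:Che:1} to $X^{\mathrm{sing}}$ and to $\pi^{-1}(X^{\mathrm{sing}})$, and enlarge $S(m)$ so that for $p \notin S(m)$ the relative smooth locus $X^{\mathrm{sm}}$ reduces to the smooth locus of $X_{\mathbb{F}_p}$ over $\mathbb{F}_p$, and the two closed subschemes above have the expected dimensions after reduction mod $p$. The Lang--Weil bounds then provide a single constant $C(m)$ with
$$|X^{\mathrm{sing}}(\mathbb{F}_q)| \leq C(m)\,q^{d-1}, \qquad |\pi^{-1}(X^{\mathrm{sing}})(\mathbb{F}_q)| \leq C(m)\,q^{md-1},$$
uniformly in $q \in \mathcal{P}_{S(m)}$. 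Dividing the displayed identity by $q^{md}$ and using $|X^{\mathrm{sm}}(\mathbb{F}_q)| = |X(\mathbb{F}_q)| - |X^{\mathrm{sing}}(\mathbb{F}_q)|$ gives
$$h_X(\mathbb{F}_q[t]/t^m) \;=\; h_X(\mathbb{F}_q) - q^{-d}\,|X^{\mathrm{sing}}(\mathbb{F}_q)| + q^{-md}\,|\pi^{-1}(X^{\mathrm{sing}})(\mathbb{F}_q)| \;=\; h_X(\mathbb{F}_q) + O(q^{-1}),$$
which is the required bound. The one genuine difficulty is transporting Mustata's characteristic-zero irreducibility of $J_\mathbb{Q}$ to uniform dimension control on $\pi^{-1}(X^{\mathrm{sing}})_{\mathbb{F}_p}$ for all but finitely many $p$; this is exactly the content of Proposition \ref{prop:Che}, and is the linchpin of the argument.
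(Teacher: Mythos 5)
Your proposal is correct and follows essentially the same route as the paper: isolate the relative smooth locus, use that jets over it form an affine bundle (giving the exact main term $q^{(m-1)d}\,|X^{\mathrm{sm}}(\mathbb{F}_q)|$), use Mustata's theorem to see that the jet-preimage of the singular locus is a proper closed, hence lower-dimensional, subset of the irreducible jet scheme over $\mathbb{Q}$, and kill both error terms by Lang--Weil after spreading out. The only difference is packaging: the paper runs the complement bounds through Corollary \ref{cor:h.open.dense} (applied to $U\subset X$ and to $U^{(m)}\subset X^{(m)}$) rather than writing your exact point-count identity, but the inputs and estimates are the same.
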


\begin{proof} Let $U \subset X$ be the smooth locus of the structure map $X \rightarrow \Spec \mathbb{Z}$. Since $X_{\mathbb{Q}}$ is reduced, $U_{\mathbb{Q}}$ is dense in $X_{\mathbb{Q}}$. It follows that there is a finite set $S_1$ of prime numbers such that $U_{{\mathbb{F}_p}}$ is a dense subvariety of $X_{{\mathbb{F}_p}}$, for all $p\notin S_1$. By enlarging $S_1$ if needed, we can assume that $\dim X_{\mathbb{F}_p}=\dim X_{\mathbb{Q}}$ for all $p\notin S_1$.

Denote $X^{(m)}=\Jet_m(X/\Spec \mathbb{Z})$ and $U^{(m)}=\Jet_m(U/\Spec \mathbb{Z})$. For any $p\notin S_1$ and any $n$, the natural projection $U^{(n+1)}_{\mathbb{F}_p} \rightarrow U^{(n)}_{\mathbb{F}_p}$ is an $\mathbb{A} ^{\dim U_\mathbb{Q}}$-bundle. In particular, $|U^{(m)}(\mathbb{F}_q)|=|U(\mathbb{F}_q)| \cdot q^{(m-1)\dim U_{\mathbb{F}_q}}$. Hence,
\begin{equation} \label{eq:weak.rate.3}
h_U(\mathbb{F}_q[t]/t^m)=h_U(\mathbb{F}_q).
\end{equation}

By Corollary \ref{cor:h.open.dense}, there is a finite set $S_2 \supset S_1$ of prime numbers and a constant $C_2$ such that
\begin{equation} \label{eq:weak.rate.1}
|h_X(\mathbb{F}_q)-h_U(\mathbb{F}_q)|<C_2 q ^{-1} \quad q\in \mathcal{P}_{S_2}.
\end{equation}

Since $X_{\mathbb{Q}}$ has rational singularities, Mustata's theorem implies that $U^{(m)}_{\mathbb{Q}}$ is dense in $X^{(m)}_{\mathbb{Q}}$. There is a finite set $S_3(m)$ of prime numbers such that $U^{(m)}_{\mathbb{F}_p}$ is dense in $X^{(m)}_{\mathbb{F}_p}$, for all $p\notin S_3(m)$. Applying Corollary \ref{cor:h.open.dense}, we get that there is a finite set $S_4(m) \supset S_3(m)$ of prime numbers and a constant $C_4$ such that
\begin{equation} \label{eq:weak.rate.2}
|h_X(\mathbb{F}_q[t]/t^m)-h_U(\mathbb{F}_q[t]/t^m)|=|h_{X^{(m)}}(\mathbb{F}_q)-h_{U^{(m)}}(\mathbb{F}_q)|<C_4 q ^{-1} \quad q\in \mathcal{P}_{S_4(m)}.
\end{equation}

The claim now follows from \eqref{eq:weak.rate.3}, \eqref{eq:weak.rate.1}, and \eqref{eq:weak.rate.2}.

\end{proof}

%

\subsubsection{$\eqref{item:RS} \implies \eqref{item:bdd.vert.weak}$ in the case case where $X_\mathbb{Q}$ is a complete intersection} \label{sssec:RS.bdd.vert}
Since $X_\mathbb{Q}$ is a complete intersection, there is a smooth algebraic variety $\overline{Y}$ over $\mathbb{Q}$, a closed embedding $X_\mathbb{Q} \rightarrow \overline{Y}$, and a regular map $\overline{\varphi} : \overline{Y} \rightarrow \mathbb{A} ^{N}_\mathbb{Q}$, which is flat above 0, such that $X_\mathbb{Q}\cong \overline{\varphi} ^{-1} (0)$. Choose a $\mathbb{Z}$-scheme $Y$ of finite type and a regular map $\varphi : Y \rightarrow \mathbb{A}^N_\mathbb{Z}$ such that $Y_\mathbb{Q} = \overline{Y}$ and $\varphi_\mathbb{Q} = \overline{\varphi}$. There is a finite set $S$ of primes such that $Y_{\mathbb{Z}[S ^{-1}]}$ is smooth over $\Spec \mathbb{Z}[S ^{-1}]$, the restriction $\varphi_{\mathbb{Z}[S ^{-1}]}$ is flat above 0, and the isomorphism $X_\mathbb{Q} \cong \overline{\varphi} ^{-1} (0)$ extends to an isomorphism $X_{\mathbb{Z}[S ^{-1}]} \cong \varphi_{\mathbb{Z}[S ^{-1}]}^{-1} (0)$. It is enough to show, for any $p\notin S$, that $h_{\varphi_{\mathbb{Z}[S ^{-1}]}^{-1} (0)}(\mathbb{Z} /p^m)$ is bounded uniformly in $m$.


Fix $p\notin S$. Let $\mu$ be the measure on $Y(\mathbb{Z}_p)$ given by Corollary \ref{cor:mu}.  We have
\[
\mu \left( \varphi ^{-1} (p^m\Z_p^N) \right)  = \mu \left( r_m ^{-1} (\varphi_{\mathbb{Z} / p^m} ^{-1} (0)(\mathbb{Z} / p^m)) \right)  = \sum_{x \in \varphi_{\mathbb{Z} / p^m} ^{-1} (0)(\mathbb{Z} / p^m)} \mu \left( r_m ^{-1} (x) \right)  = p^{-m\dim Y_\Q}|\varphi_{\mathbb{Z} / p^m} ^{-1} (0)(\mathbb{Z} / p^m)|.
\]
By Theorem \ref{thm:push.forward.detailed}, there is a constant $C$ such that, for any $m \in \mathbb N,$
\[
\mu \left( \varphi ^{-1} (p^m\Z_p^N) \right)=\varphi_* \mu \left( p^m\Z_p^N \right) <\frac{C}{p^{mN}}.
\]
we get that
\[
|X(\mathbb{Z} / p^m)|=
|\varphi_{\mathbb{Z} / p^m} ^{-1} (0)(\mathbb{Z} / p^m)| < C p^{\dim Y_\Q-N}.
\]
Since $\dim Y_\Q-N=\dim X_\Q$, this implies the estimate we want.

\subsubsection{$\eqref{item:RS} \implies \eqref{item:rate.precise}$ in the case where $X_\mathbb{Q}$ is a complete intersection} \label{sssec:RS.rate.ci}

Assume now that $X$ is a complete intersection and satisfies \eqref{item:RS}. Applying Corollary \ref{cor:motivic.affine}, we get $M,S,I_j,g_j,V_j,\alpha_j,\beta_j,M_j,a_{j,k}$ such that
\begin{equation}\label{eq:RS.rate.ci}
h_X(\mathbb{Z}_q / \mathfrak{m}_q^m)=h_X(\mathbb{F}_q[t]/t^m)=\sum_{j=1}^M 1_{I_j}(m) |V_j(\mathbb{F}_q)| \frac{g_j(m,q) q^{\alpha_j m+ \beta_j}}{\prod_{k=1}^{M_j} \left( 1-q^{-a_{jk}} \right)}
\end{equation}
for all $q\in \mathcal{P}_S$ and $m$.

Define an equivalence relation on $\mathbb{N}$ by $m_1 \sim m_2$ iff  $ \forall j$ we have $m_1 \in I_j \Leftrightarrow m_2 \in I_j$.
We will show that there is a constant $C$
such that, for any $m_1 \sim  m_2$, we have
\begin{equation} \label{eq:boolean.algebra}
|h_X(\mathbb{Z}_q / \mathfrak{m}_q^{m_1})-h_X(\mathbb{Z}_q / \mathfrak{m}_q^{m_2})| < C q ^{-1}.
\end{equation}

Property \eqref{item:rate.precise} for $X$ will follow by applying Proposition \ref{prop:weak.rate} to representatives of the finitely many equivalence classes of $\sim$.

If the equivalence class of $m_1$ and $m_2$ is finite then
\eqref{eq:boolean.algebra} follows from Proposition \ref{prop:weak.rate}. Thus,  we can assume that all $I_j$ are infinite.

%

\begin{lemma} \label{lem:Pmq} Let $V$ be a scheme of finite type over $\Spec \mathbb{Z}$ whose generic fiber is non-empty, let $g(x,y)$ be a real polynomial, let $\alpha,\beta\in \mathbb{R}$, and let $a_1,\ldots,a_n$ be negative integers, 
Set $$P(m,q)=|V(\mathbb{F}_q)| \cdot \frac{g(m,q) q^{\alpha m+\beta}}{\prod_1^n \left( 1-q^{a_k} \right) }.$$  Then the following claims hold: \begin{enumerate}
\item Assume that either \begin{enumerate}
\item $\alpha >0$ or
\item $\alpha=0$ and $g$ depends on $m$ non-trivially (i.e. $\frac{\partial g}{\partial m} \neq 0$).
\end{enumerate}
Then $\lim_{m \rightarrow \infty} P(m,p)=\pm \infty$ for infinitely many primes $p.$
\item If $\alpha<0$ then there is a constant $D$ such that $|P(m,q)|<q ^{-1}$ for $m>D$ and any prime power $q$.
\end{enumerate}
\end{lemma}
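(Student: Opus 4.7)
The plan is to bound each factor of $P(m,q)$ uniformly in $q$, then exploit the sign of $\alpha$ separately in the two parts. Three uniform estimates are immediate: since $V$ is of finite type over $\Spec\Z$ with $V_\Q$ non-empty, there exist a constant $C_0$ and an integer $d$ with $|V(\F_q)| \le C_0 q^d$ for every prime power $q$; since $g \in \Q[x,y]$, there exist $C_1$ and non-negative integers $A,B$ with $|g(m,q)| \le C_1 (m+1)^A (q+1)^B$; and since each $a_k$ is a negative integer, $q^{a_k} \le 1/2$ for $q \ge 2$, so $\prod_k(1-q^{a_k}) \ge 2^{-n}$. Combining these, there exist $C_2 > 0$ and $E \in \R$ with
\[
|P(m,q)| \le C_2 (m+1)^A q^{E+\alpha m} \qquad \text{for all prime powers } q \text{ and all } m \ge 0.
\]

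For part~(2), assume $\alpha < 0$. Once $m$ is large enough that $E + 1 + \alpha m \le 0$, the exponent of $q$ in $q\cdot|P(m,q)|$ is non-positive, so the right-hand side of the displayed bound, multiplied by $q$, is a decreasing function of $q$ on $[2,\infty)$. Hence
\[
q \cdot |P(m,q)| \le C_2 (m+1)^A \, 2^{E+1+\alpha m}.
\]
Since $-\alpha > 0$, the factor $2^{\alpha m}$ decays exponentially in $m$ and eventually absorbs the polynomial $(m+1)^A$; choosing $D$ so that $C_2 (m+1)^A \, 2^{E+1+\alpha m} < 1$ for $m > D$ gives $|P(m,q)| < q^{-1}$ uniformly in $q$. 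The key technical point is this uniformity: reducing to the case $q=2$ via monotonicity is what allows a single $D$ to work for all prime powers simultaneously.

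For part~(1), I produce infinitely many primes $p$ for which $|P(m,p)| \to \infty$ as $m\to\infty$. For fixed $p$, the denominator $\prod_k(1-p^{a_k})$ is a positive constant. Write $g(m,q) = \sum_{i=0}^A h_i(q)\, m^i$ with $h_A \in \Q[y]$ not identically zero (we may assume $g \not\equiv 0$, since otherwise $P\equiv 0$); then $h_A(p)\neq 0$ for all but finitely many primes $p$, and for such $p$ the polynomial $g(m,p)$ has degree exactly $A$ in $m$ and is eventually monotone. By Proposition~\ref{prop:Che}\eqref{prop:Che:2} applied to a top-dimensional irreducible component of $V_\Q$, $V(\F_p)\neq\emptyset$ for infinitely many $p$. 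For such good $p$, in case~(a) the factor $p^{\alpha m}$ grows exponentially in $m$ and dominates the polynomial $g(m,p)$, so $|P(m,p)|\to\infty$; in case~(b), $\alpha=0$ and $A\ge 1$, so $|g(m,p)| \to \infty$ polynomially while $p^{\alpha m + \beta} = p^\beta$ is a fixed positive constant, again giving $|P(m,p)|\to\infty$. The sign of $P(m,p)$ stabilizes for large $m$ by the eventual monotonicity of $g(m,p)$, completing the proof.
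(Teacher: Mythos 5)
Your argument is correct and follows essentially the same route as the paper: part (2) by crude polynomial bounds on $|V(\mathbb{F}_q)|$, on $g$, and on the denominator, played against the exponential decay of $q^{\alpha m}$ (your monotonicity reduction to $q=2$ just makes explicit the uniformity the paper leaves implicit), and part (1) by producing infinitely many primes with $V(\mathbb{F}_p)\neq\emptyset$ and letting the numerator grow in $m$, with your $h_A(p)\neq 0$ observation handling the degenerate vanishing of $g(m,p)$ more carefully than the paper does. The one point to tighten: Proposition \ref{prop:Che}\eqref{prop:Che:2} by itself only yields an absolutely irreducible top-dimensional component defined over $\mathbb{F}_p$, not an $\mathbb{F}_p$-point, so, as in the paper's proof, you should also invoke the Lang--Weil estimates (valid for all sufficiently large $p$) to conclude that $V(\mathbb{F}_p)\neq\emptyset$ for infinitely many primes.
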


\begin{proof} $ $
\begin{enumerate}
\item Under the assumptions, the limit $\lim_{m \rightarrow \infty} P(m,p)$ will be infinity if $V(\mathbb{F}_p)\neq \emptyset$. By the Lang--Weil bounds and Proposition \ref{prop:Che}\eqref{prop:Che:2}, this happens for infinitely many primes.
\item Suppose that the degree of $g$ is $d$. Then there is a constant $M$ such that $|g(m,q)| \leq Mm^dq^d$. In addition, There is a constant $e$ such that $|V(\mathbb{F}_q)|<eq^{e}$ for all prime powers $q$. Hence, there is a constant $C$ such that
\[
|P(m,q)| \leq Cm^dq^{\alpha m+\beta+d+e}.
\]
This implies the assertion.
\end{enumerate}
\end{proof}

%
%
%
By \S\S\ref{sssec:RS.bdd.vert}, after enlarging $S$, we have that
\[
\sup_{m} h_X(\mathbb{F}_p[t]/t^m)=\sup_{m}h_X(\mathbb{Z}/p^{m})< \infty,
\]
for any prime $p\notin S$. Since each summand in the right hand side of \eqref{eq:RS.rate.ci} is non-negative, we get that, for each $j$ and  $p\notin S$,
\begin{equation} \label{eq.lim.m}
\sup_{m} 1_{I_j}(m) |V_j(\mathbb{F}_q)| \frac{g_j(m,q) q^{\alpha_j m+ \beta_j}}{\prod \left( 1-q^{-a_{jk}} \right)} < \infty.
\end{equation}

For each $j$, Lemma \ref{lem:Pmq} implies that either $\alpha_j<0$ or $\alpha_j=0$ and $g_j$ is independent of $m$. Moreover, there is $D$ such that

\begin{equation}\label{eq:q.inv}
|V_j(\mathbb{F}_q)| \frac{g_j(m,q) q^{\alpha_j m+ \beta_j}}{\prod \left( 1-q^{-a_{jk}} \right)}< q ^{-1},
\end{equation}
 if $ \alpha_j <0$ and $m>D .$
 Let $J_1=\left\{ j \mid \alpha_j <0 \right\}$. 

By \eqref{eq:q.inv} and \eqref{eq:RS.rate.ci} we get that, for any $m>D$
 and $q\in \mathcal P_S,$ \[
\left| h_X(\mathbb{F}_q[t]/t^m)-\sum_{j\notin J_1} 1_{I_j}(m) |V_j(\mathbb{F}_q)| \frac{g_j(m,q) q^{\alpha_j m+ \beta_j}}{\prod \left( 1-q^{-a_{jk}} \right)}\right| <|J_1| q ^{-1}.
\]

Note that the sum on the left hand side depends only on $q$ and the equivalence class of $m$.
Thus, if $m_1,m_2>D$,  $m_1 \sim m_2$, and $q\in \mathcal P_S$, then
$$\left| h_X(\mathbb{F}_q[t]/t^{m_1})- h_X(\mathbb{F}_q[t]/t^{m_2})\right| <2|J_1| q ^{-1}.
 $$
 This, together with proposition \ref{prop:weak.rate},  implies  \eqref{eq:boolean.algebra}.

%
%
\subsubsection{$\eqref{item:RS} \implies \eqref{item:rate.precise}$ in general}
The proof is by induction on the size of the minimal open cover of $X_{\mathbb Q}$ by complete intersection varieties. The base is \S\S\ref{sssec:RS.rate.ci}. For the transition, let $X_{\mathbb Q}= \bigcup_1^n X'_i$ be a minimal open cover of $X_\mathbb{Q}$ by complete intersection varieties and assume that we know the assertion for any $Y$ such that $Y_{\mathbb Q}$ can be covered by less then $n$ complete intersection varieties. We can chose complete intersection schemes $X_i$ and a finite set of primes $S_0$ such that $X_{S_0^{-1}\mathbb Z}= \bigcup_1^n X_i$ is an open cover.

Let $U =\bigcup_2^n X_i$ and let $V \subset U \cap X_1$ be a complete intersection, open, and non-empty subscheme. By induction, we know that there is a finite set $S_{1}\supset S_0$ of prime numbers and a constant $C_1$ such that, for any $q\in \mathcal{P}_{S_{1}}$ and $m \in \mathbb{N}$, we have:
\begin{itemize}
\item $|h_{X_1}(\mathbb{Z}_q / \mathfrak{m}_q ^m)-h_{X_1}(\mathbb{F}_q)|<C_{1} q^{-1}$.
\item $|h_{U}(\mathbb{Z}_q / \mathfrak{m}_q ^m)-h_{U}(\mathbb{F}_q)|<C_{1} q^{-1}$.
\item $|h_{V}(\mathbb{Z}_q / \mathfrak{m}_q ^m)-h_{V}(\mathbb{F}_q)|<C_{1} q^{-1}$.
\end{itemize}

By Lemma \ref{lem:simple.h},
\begin{multline*}
h_{X}(\mathbb{Z}_q / \mathfrak{m}_q ^m)-h_{X}(\mathbb{F}_q) =\\= \left( h_{X_1}(\mathbb{Z}_q / \mathfrak{m}_q ^m)-h_{X_1}(\mathbb{F}_q) \right) + \left( h_{U}(\mathbb{Z}_q / \mathfrak{m}_q ^m)-h_{U}(\mathbb{F}_q) \right) - \left( h_{X_1 \cap U}(\mathbb{Z}_q / \mathfrak{m}_q ^m)-h_{X_1 \cap U}(\mathbb{F}_q) \right),
\end{multline*}
and so it is enough to prove that $|h_{X_1 \cap U}(\mathbb{Z}_q / \mathfrak{m}_q ^m)-h_{X_1 \cap U}(\mathbb{F}_q)|=O(q ^{-1})$.

By Corollary \ref{cor:h.open.dense}, there exist a finite set $S\supset S_1$ of prime numbers and a constant $C_2$ such that, for any $q\in \mathcal{P}_{S}$ and $m \in \mathbb{N}$, we have:

\begin{itemize}
\item $|h_{X_1}(\mathbb{F}_q)-h_{X_1 \cap U}(\mathbb{F}_q)|<C_{2} q^{-1}$.
\item $|h_{X_1 \cap U}(\mathbb{F}_q)-h_V(\mathbb{F}_q)|<C_{2} q^{-1}$.
\end{itemize}

Now,
\[
h_{X_1 \cap U}(\mathbb{Z}_q / \mathfrak{m}_q ^m)-h_{X_1 \cap U}(\mathbb{F}_q) \leq h_{X_1}(\mathbb{Z}_q / \mathfrak{m}_q ^m)-h_{X_1}(\mathbb{F}_q)+h_{X_1}(\mathbb{F}_q)-h_{X_1 \cap U}(\mathbb{F}_q)< (C_{1}+C_{2}) q^{-1},
\]
Similarly,
\[
h_{X_1 \cap U}(\mathbb{Z}_q / \mathfrak{m}_q ^m)-h_{X_1 \cap U}(\mathbb{F}_q) \geq h_{V}(\mathbb{Z}_q / \mathfrak{m}_q ^m)-h_V(\mathbb{F}_q)+h_V(\mathbb{F}_q)-h_{X_1 \cap U}(\mathbb{F}_q)>-(C_{1}+C_{2}) q^{-1}.
\]

\section{Zeta functions} \label{sec:global}

\subsection{Representation zeta functions of compact $p$-adic groups and the proof of Theorem \ref{thm:zeta.minus.1}} \label{ssec:pf.zeta.minus.1}
By \cite[Corollary 4.1.4]{AA}, the deformation variety $\Def_{G,n}$ is absolutely irreducible; by the assumptions, it has rational singularities. By  Proposition \ref{prop:Denef.Loeser} and Theorem \ref{thm:size.X},  we can choose a finite set $S$ of primes and a positive number $C$ such that
\begin{itemize}
\item For any $q \in \mathcal{P}_S$ and any positive integer $m$,
\begin{enumerate}[(a)]
\item\label{it:asum.1} $h_{\Def_{G,n}}(\Z_q/\mathfrak{m}_q^m)=h_{\Def_{G,2n}}(\F_q[t]/t^m).$
\item\label{it:asum.2} $h_{G}(\Z_q/\mathfrak{m}_q^m)=h_{G}(\F_q[[t]]/t^m).$
\item\label{it:asum.1.5} $|h_{\Def_{G,n}}(\mathbb{F}_q)-1|<C q ^{\frac{1}{2}}$.
\item\label{it:asum.3} $|h_{\Def_{G,n}}(\Z_q/\mathfrak{m}_q^m)-h_{\Def_{G,n}}( \mathbb{F}_q)|< C q ^{-1}.$
\item\label{it:asum.4} $|h_{G}(\Z_q/\mathfrak{m}_q^m)-1|<C q ^{-\frac{1}{2}}$.
\end{enumerate}
\item $G$ is smooth over $\Spec(S^{-1}\Z)$.
\end{itemize}
After a finite extension of scalars, $G_{\mathbb{Q}}$ is isomorphic to a simply connected Chevalley group. Thus, after enlarging $S$, we can assume that $G_{\mathbb{F}_p}$ is simply connected and semisimple, for all $p\notin S$.
In addition, \eqref{it:asum.4} implies that, after a further enlargement of $S$, we can assume that, for any $q\in \mathcal{P}_S$,
\begin{enumerate}[(a)]
\setcounter{enumi}{5}
\item\label{it:asum.6} $h_{G}(\Z_q/\mathfrak{m}_q^m)>\frac{1}{2}$.
\end{enumerate}
For any $m$,
\begin{equation} \label{eq:zeta.h}
\zeta_{G(\mathbb{Z}_q/ \mathfrak{m}_q ^m)}(2n-2)=\frac{|\Def_{G,n}(\mathbb{Z}_q/ \mathfrak{m}_q ^m)|}{|G^{2n-1}(\mathbb{Z}_q/ \mathfrak{m}_q ^m)|}=\frac{h_{\Def_{G,n}}(\mathbb{Z}_q/ \mathfrak{m}_q ^m)}{h_{G^{2n-1}}(\mathbb{Z}_q/ \mathfrak{m}_q ^m)}=\frac{h_{\Def_{G,n}}(\mathbb{Z}_q/ \mathfrak{m}_q ^m)}{h_{G^{2n-1}}(\mathbb{F}_q)},
\end{equation}
where the first equality follows from the Frobenius formula (Theorem \ref{thm:Frobenius}), the second follows from the fact that $\dim \Def_{G_\Q,n}=\dim G_\Q^{2n-1}$ (\cite[Corollary 4.1.4]{AA}), and the third follows from the fact that $G$ is smooth over $O$ and Hensel's lemma.

Using \eqref{eq:zeta.h} with $m=1$ and \eqref{it:asum.1.5},\eqref{it:asum.4}, we get that $\zeta_{G(\mathbb{F}_q)}(2n-2) \rightarrow 1$. Together with \cite[Theorem 3.1]{AKOV} and the fact that $G_{\mathbb{F}_q}$ is semisimple and simply connected, after enlarging $C$ we can assume that for every $q\in \mathcal{P}_S$,

\begin{enumerate}[(a)]
\setcounter{enumi}{6}
\item\label{it:asum.5} $|\zeta_{G(\mathbb{F}_q)}(2n-2)-1|<C q ^{-1}$.
\end{enumerate}

Fix $q\in \mathcal{P}_S$. Let $O$ be either $\mathbb{Z}_q$ or $\mathbb{F}_q[[t]]$, and let $\mathfrak{m} \subset O$ be the maximal ideal. By \eqref{it:asum.1}, \eqref{it:asum.2}, and the Frobenius formula,
\begin{equation}
\zeta_{G(O/ \mathfrak{m} ^m)}(2n-2)=\frac{|\Def_{G,n}(O/ \mathfrak{m} ^m)|}{|G^{2n-1}(O / \mathfrak{m} ^m)|}=\frac{|\Def_{G,n}(\mathbb{Z}_q/ \mathfrak{m}_q ^m)|}{|G^{2n-1}(\mathbb{Z}_q / \mathfrak{m}_q ^m)|}=\zeta_{G(\mathbb{Z}_q/ \mathfrak{m}_q ^m)}(2n-2).
\end{equation}
Thus, Equation \eqref{eq:zeta.h} implies that
\begin{multline*}
|\zeta_{G(O/ \mathfrak{m} ^m)}(2n-2)-1|\leq |\zeta_{G(\mathbb{Z}_q/ \mathfrak{m}_q ^m)}(2n-2)-\zeta_{G(\F_q)}(2n-2)|+|\zeta_{G(\F_q)}(2n-2)-1| =\\
=\frac{|h_{\Def_{G,n}}(\mathbb{Z}_q/ \mathfrak{m}_q ^m)-h_{\Def_{G,n}}(\F_q )|}{h_{G^{2n-1}}(\mathbb{F}_q)}+|\zeta_{G(\F_q)}(2n-2)-1|. \end{multline*}

Using \eqref{it:asum.3}, \eqref{it:asum.6}, \eqref{it:asum.5},  we obtain that:
$$
|\zeta_{G(O/ \mathfrak{m} ^m)}(2n-2)-1|\leq(2^{2n-1}+1)C q ^{-1}.
$$

Since each continuous representation of a pro-finite group is locally constant, we get that the series defining $\zeta_{G(O)}(2n-2)$ equals the limit $$\lim_{m \rightarrow \infty} \zeta_{G(O/ \mathfrak{m} ^m)}(2n-2),$$ and the result follows.

\subsection{Representation zeta functions of adelic groups and the proof of Theorem \ref{thm:introduction.main}} \label{ssec:pf.main}

\begin{proof} [Proof of Theorem \ref{thm:introduction.main}] Theorem \ref{thm:zeta.minus.1} implies that there is a finite set $S$ of prime numbers and a positive integer $D$ such that, for every $q\in \mathcal{P}_S$,
\begin{equation} \label{eq:zeta.main}
\zeta_{G(\mathbb{Z}_q)}(2n-2)-1=\zeta_{G(\mathbb{F}_q[[t]])}(2n-2)-1 \leq D q ^{-1}.
\end{equation}
Let $C>\max S$, let $k$ be a global field of characteristic greater than $C$, let $T$ be a finite set of places of $k$ containing all archimedean ones, and let $n$ be such that $\Def_{G_\mathbb{Q},n}$ has rational singularities. We will show that $\zeta_{G \left( \widehat{O_{k,T}} \right) }(2n-2+\epsilon)$ converges, for every positive $\epsilon$.

For any non-archimedean place $v$, let $O_v$ be the ring of integers of the completion $k_v$, and let $|v|$ be the size of the residue field of $O_v$. We have that $\zeta_{G \left( \widehat{O_{k,T}}\right)}(s)=\prod_{v \notin T} \zeta_{G(O_v)}(s)$, by which we mean, in particular, that the left hand side converges absolutely if and only if each of the terms on the right hand side converges absolutely, and their product converges absolutely. By \cite[Theorem IV]{AA} and the assumptions, the series $\zeta_{G(O_v)}(2n-2)$ converges for every $v$. Thus, it is enough to prove that $\prod_{v \notin T'} \zeta_{G(O_v)}(2n-2+\epsilon)$ converges, for some finite set of places $T' \supset T$.

For almost all places $v$, we have
\begin{equation} \label{eq:O.main}
O_v\cong \mathbb{Z}_{|v|} \textrm{ or } O_v\cong\mathbb{F}_{|v|}[[t]].
\end{equation}

By \cite[Proposition 4.6]{LM}, there is a constant $\delta >0$ such that, for almost all places $v$, the minimal dimension of a non-trivial representation of $G(O_v)$ is at least $|v|^{\delta}$. It follows that
\begin{equation} \label{eq:rep.main}
\zeta_{G(O_v)}(2n-2+\epsilon)-1 \leq |v|^{-\epsilon \delta}\left( \zeta_{G(O_v)}(2n-2)-1\right).
\end{equation}

Let $T' \supset T$ be a finite set of places such that \eqref{eq:zeta.main}, \eqref{eq:O.main}, and \eqref{eq:rep.main} hold for any $v\notin T'$. We have

\[
\prod_{v\notin T'} \zeta_{G(O_v)}(2n-2+\epsilon) \leq \prod_{v \notin T'} \left( 1+D |v| ^{-1-\epsilon \delta} \right) \leq \prod_{v \notin T'} \left( 1+|v| ^{-1-\epsilon \delta} \right)^D \leq \prod_{v\notin T'} \left( 1-|v|^{-1-\epsilon \delta} \right) ^{-D}\leq
\]

\[
\leq \left( \prod_{v\textrm{ non-archimedean}} \left( 1-|v|^{-1-\epsilon \delta} \right) ^{-1}  \right) ^D.
\]
The last product is the product expansion of the Dedekind zeta function of $k$ at the point $1+\epsilon \delta$, and it is well-known that it converges absolutely.

\end{proof}

\subsection{Representation zeta functions of arithmetic groups and the proof of Theorem \ref{thm:introduction.bound}} \label{ssec:pf.bound}

Let us first recall the notion of the Congruence Subgroup Property.
\begin{definition} \label{defn:CSP} Let $k$ be a global field, and let $T$ be a finite set of places of $k$ containing all archimedean ones. We say that $G(O_{k,T})$ has the Congruence Subgroup Property (CSP for short) if the canonical map
\[
\eta : \widehat{G(O_{k,T})} \rightarrow G \left( \widehat{ O_{k,T} } \right) 
\]
has a finite kernel. Here, the domain of $\eta$ is the pro-finite completion of $G(O_{k.T})$.
\end{definition}

By our assumptions on $G$ and the Strong Approximation Theorem (\cite[Chapter 7]{PR}), the map $\eta$ is always surjective. It is known that, if $\rk_{k} G_k \geq 2$, then $G(O_{k,T})$ satisfies CSP. More generally, a conjecture of Serre asserts that the group $G(O_{k,T})$ has CSP whenever $\sum_{v\in T}\rk_{k_v} G \geq 2$ and $\rk_{k_v} G \geq 1$ for any finite place $v\in T$. This conjecture is known in many cases; see, for example, \cite{Ra}.

%

Theorem \ref{thm:introduction.bound}
follows from Theorem \ref{thm:introduction.main} and the following theorem

\begin{theorem} Let $k$ be a global field and let $T$ be a finite set of places of $k$ containing all the archimedean ones. Assume that $G(O_{k,T})$ satisfies CSP, then $\alpha(G(O_{k,T}))=\alpha \left( G(\widehat{O_{k,T}}) \right)$.
\end{theorem}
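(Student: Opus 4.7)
The CSP hypothesis says $\eta: \widehat{\Gamma} \to G(\widehat{O_{k,T}})$ has finite kernel $K$ (it is surjective by strong approximation, as the paper notes). A standard Clifford-theoretic argument then gives $\alpha(\widehat{\Gamma}) = \alpha(G(\widehat{O_{k,T}}))$, since each irreducible continuous representation of $\widehat{\Gamma}$ is a constituent of one induced from an irreducible representation trivial on $K$, bounding $r_n(\widehat{\Gamma})$ by $|K|\cdot r_{\leq n}(G(\widehat{O_{k,T}}))$. Because $\Gamma$ is dense in $\widehat{\Gamma}$, restriction yields an injection from continuous irreducible complex representations of $\widehat{\Gamma}$ into irreducible representations of $\Gamma$, so $r_n(\widehat{\Gamma}) \leq r_n(\Gamma)$ and hence $\alpha(G(\widehat{O_{k,T}})) = \alpha(\widehat{\Gamma}) \leq \alpha(\Gamma)$.

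\textbf{Reverse inequality via super-rigidity.} For the reverse inequality the plan is to invoke Margulis super-rigidity (available for higher-rank arithmetic lattices, where CSP is proved unconditionally, with suitable substitutes in the low-rank cases covered by CSP) to establish a tensor decomposition
\[
\rho \cong \rho_{\mathrm{alg}} \otimes \rho_{\mathrm{fin}}
\]
for every irreducible finite-dimensional complex representation $\rho$ of $\Gamma$, where $\rho_{\mathrm{alg}}$ is the restriction to $\Gamma$ of an algebraic representation of a finite product $\prod_\sigma G^\sigma$ of Galois twists of $G$, and $\rho_{\mathrm{fin}}$ has finite image. By CSP, $\rho_{\mathrm{fin}}$ factors through a principal congruence quotient and therefore extends uniquely to a continuous representation of $\widehat{\Gamma}$. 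Counting dimensions gives
\[
r_n(\Gamma) \leq \sum_{ab=n} A(a) \cdot r_b(\widehat{\Gamma}),
\]
where $A(a)$ is the number of irreducible algebraic representations of $\prod_\sigma G^\sigma$ of dimension $a$. By the Weyl dimension formula the series $\sum_a A(a) a^{-s}$ has a small abscissa $\alpha_{\mathrm{alg}}$, namely $\alpha_{\mathrm{alg}} = \mathrm{rank}/D_+$ with $D_+$ the number of positive roots. Combined with the lower bound $\alpha(\widehat{\Gamma}) \geq \alpha_{\mathrm{alg}}$, which can be read off from the polynomial representation growth of congruence quotients as in \cite{LM}, the above estimate yields $\alpha(\Gamma) \leq \alpha(\widehat{\Gamma})$ via an elementary manipulation of partial sums.

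\textbf{Main obstacle.} The key difficulty is the tensor decomposition $\rho \cong \rho_{\mathrm{alg}} \otimes \rho_{\mathrm{fin}}$, which is essentially a super-rigidity statement rather than a formal consequence of CSP alone; in the higher-rank setting (where CSP is known) it follows directly from Margulis super-rigidity together with the description of algebraic representations of simple simply connected groups up to Galois twists. A secondary but easier comparison is the abscissa inequality $\alpha_{\mathrm{alg}} \leq \alpha(\widehat{\Gamma})$, which ensures that the ``geometric'' factor does not dominate. Once both inputs are in place, combining them with Step~1 gives $\alpha(\Gamma) = \alpha(\widehat{\Gamma}) = \alpha(G(\widehat{O_{k,T}}))$, completing the proof.
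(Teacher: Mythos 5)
Note first that the paper does not give an independent proof of this statement: its proof is a two-line citation, namely \cite[Theorem 1.4]{AKOV} in characteristic zero and \cite[Theorem 3.3]{LL} together with \cite[Lemma 2.2]{LM} in positive characteristic. Your first step (finite congruence kernel, Clifford theory for the finite normal subgroup, and injectivity of restriction of continuous irreducibles along $\Gamma\to\widehat{\Gamma}$) is correct and is exactly the content of the \cite{LM}-type lemma. Your second step is, in outline, the argument by which the cited characteristic-zero result is actually proved in \cite{LL} and \cite{AKOV}; so you are not proposing a different route so much as sketching the proofs that the paper chooses to quote.

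As a standalone proof, however, there are genuine gaps. (a) The tensor decomposition $\rho\cong\rho_{\mathrm{alg}}\otimes\rho_{\mathrm{fin}}$ is the entire content of the reverse inequality, and you do not prove it; it is not a formal consequence of CSP, and the phrase ``suitable substitutes in the low-rank cases covered by CSP'' is precisely the point that requires an argument. Invoking it as a black box amounts to invoking the theorem being proved, i.e.\ the results of \cite{LL} and \cite{AKOV} that the paper cites. (b) The statement is for an arbitrary global field $k$, and in positive characteristic your decomposition does not make sense: there are no field embeddings $k\hookrightarrow\mathbb{C}$, hence no algebraic factor, and the needed input is instead that every finite-dimensional complex representation of $\Gamma$ is (virtually) of congruence type -- again a super-rigidity-type statement, which is exactly why the paper cites \cite[Theorem 3.3]{LL} for this case; your sketch silently assumes characteristic zero. (c) The inequality $\alpha_{\mathrm{alg}}\le\alpha\bigl(G(\widehat{O_{k,T}})\bigr)$ is mis-sourced: \cite{LM} gives polynomial \emph{upper} bounds on congruence representation growth, whereas here you need a \emph{lower} bound on the adelic abscissa. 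This is true but needs an argument, e.g.\ the roughly $q^{r}$ principal-series-type representations of $G(\mathbb{F}_q)$ of dimension about $q^{|\Phi^{+}|}$ give $\alpha\bigl(G(\widehat{O_{k,T}})\bigr)\ge (r+1)/|\Phi^{+}|>r/|\Phi^{+}|=\alpha_{\mathrm{alg}}$, or one can quote the local lower bounds of \cite{LL}. With (a)--(c) supplied the argument closes, but as written it is an outline of the cited proofs with the key step left open rather than a complete proof.
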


\begin{proof} If the characteristic of $k$ is non-zero, the claim follows from \cite[Theorem 3.3]{LL} and \cite[Lemma 2.2]{LM}. If the characteristic of $k$ is zero, the claim follows from \cite[Theorem 1.4]{AKOV}.
\end{proof}

\subsection{The Igusa zeta function and proof of Theorem \ref{thm:igu}} \label{ssec:pf.thm.igu}
The following lemma is standard:
\begin{lemma} \label{lem:Dirichlet}
$ $
\begin{enumerate}
\item \label{it:Dirichlet.1} Let $a_n$ be a sequence and assume that there is a constant $C>0$ such that $\frac{1}{C} < a_n < C$ for $n> C$. Then the series $f(s):=\sum a_n p^{-ns}$ converges absolutely for $Re(s)>0$ and diverges for $s=0$.
\item \label{it:Dirichlet.2} Suppose in addition that $f(s)$ coincides (in its domain of absolute convergence) with a rational function of $p^{-s}$, which we continue to denote by $f(s)$. Then $0$ is a simple pole of $f(s)$ and all the poles on the imaginary axis are simple.
\item \label{it:Dirichlet.3} Let $a_{n,p}$ be a sequence indexed by a positive integer $n$ and a prime $p$, and let $\delta,C >0$. Suppose that $|a_{n,p}-1|<Cp^{-\delta}$.  Let $Z(s):=\prod_p(1+\sum_n a_{n,p} p^{-ns})$. Then:
\begin{enumerate}
\item $Z(s)$ converges absolutely for $Re(s)>1$.
\item If $\zeta(s)$ denotes the Riemann zeta function, then $Z(s)/\zeta(s)$ can be analytically continued for $Re(s)>1-\delta$.
\end{enumerate}
\end{enumerate}
\end{lemma}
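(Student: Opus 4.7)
\textbf{Proof plan for Lemma \ref{lem:Dirichlet}.} The three parts are essentially classical but let me record the approach for each. Throughout, the prime $p$ in parts \eqref{it:Dirichlet.1}--\eqref{it:Dirichlet.2} is fixed, and the natural change of variable is $u = p^{-s}$, under which a series in $p^{-s}$ becomes a power series in $u$.

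For part \eqref{it:Dirichlet.1}, the plan is direct comparison with a geometric series. Since $|a_n| < C$ for $n > C$, the tail of $f(s) = \sum_n a_n p^{-ns}$ is majorized by $C \sum_{n>C} p^{-n\Re(s)}$, which converges for $\Re(s) > 0$; the finitely many remaining terms pose no issue. Divergence at $s = 0$ is immediate from $a_n > 1/C$ for large $n$, so the individual terms do not tend to zero.

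For part \eqref{it:Dirichlet.2}, I would substitute $u = p^{-s}$ and work with $g(u) := \sum_n a_n u^n$. By part \eqref{it:Dirichlet.1}, the radius of convergence is exactly $1$ (convergence on $|u| < 1$, divergence at $u = 1$). If $g(u)$ also represents a rational function $R(u)$, then each pole $u_j$ of $R$ lies in $|u| \geq 1$, and the partial fraction expansion gives $a_n = \sum_j P_j(n) u_j^{-n} + (\text{decaying terms from poles with } |u_j| > 1)$, where $P_j$ is a polynomial of degree $\mathrm{ord}_{u_j}(R) - 1$. Boundedness of $a_n$ forces every pole on $|u|=1$ to be simple. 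Translating back via $u = p^{-s}$, a simple pole at $u_0 = e^{i\theta}$ corresponds to a simple pole of $f(s)$ at each $s = -i\theta/\log p + 2\pi i k / \log p$, and the divergence at $s=0$ forces $u = 1$ to actually be a pole, giving the simple pole at $s=0$.

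For part \eqref{it:Dirichlet.3}, the plan is to treat each Euler factor as a perturbation of the corresponding $\zeta$-factor. Writing $a_{n,p} = 1 + b_{n,p}$ with $|b_{n,p}| < C p^{-\delta}$, the local factor decomposes as
\[
F_p(s) := 1 + \sum_{n \geq 1} a_{n,p} p^{-ns} = \frac{1}{1-p^{-s}} + E_p(s), \qquad E_p(s) := \sum_{n \geq 1} b_{n,p} p^{-ns},
\]
with the uniform estimate $|E_p(s)| \leq C p^{-\delta} \cdot p^{-\sigma}/(1 - p^{-\sigma})$ for $\sigma = \Re(s) > 0$. Hence
\[
\frac{F_p(s)}{(1-p^{-s})^{-1}} = 1 + (1-p^{-s}) E_p(s) = 1 + O(p^{-\delta - \sigma})
\]
uniformly in $p$. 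Absolute convergence of $Z(s)/\zeta(s) = \prod_p F_p(s)(1-p^{-s})$ then follows from convergence of $\sum_p p^{-\delta - \sigma}$ whenever $\sigma > 1 - \delta$, yielding the meromorphic continuation. Absolute convergence of $Z(s)$ itself on $\Re(s) > 1$ is then immediate from the factorization $Z(s) = \zeta(s) \cdot (Z/\zeta)(s)$ together with absolute convergence of $\zeta(s)$ there.

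There is no serious obstacle, all three pieces are standard Tauberian / Euler product manipulations; the only place that requires minor care is ensuring that the estimate on $E_p(s)$ in part \eqref{it:Dirichlet.3} is uniform in $p$, but this is automatic from the hypothesis $|a_{n,p}-1| < Cp^{-\delta}$ with $C$ independent of both $n$ and $p$.
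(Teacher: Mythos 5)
The paper offers no proof of this lemma at all---it is simply declared standard---so there is nothing of the authors' to compare against; your argument is a correct rendition of the usual proofs (geometric-series comparison for \eqref{it:Dirichlet.1}, partial fractions in the variable $u=p^{-s}$ for \eqref{it:Dirichlet.2}, Euler-factor-by-Euler-factor comparison with $\zeta$ for \eqref{it:Dirichlet.3}), and the uniformity in $p$ that you flag as the only delicate point is indeed immediate from the hypothesis. Two small spots in part \eqref{it:Dirichlet.2} deserve a word more than you give them. First, divergence of the power series at the boundary point $u=1$ does not by itself make $u=1$ a pole of the rational function (compare $\sum(-1)^nu^n=1/(1+u)$, divergent at $u=1$ yet regular there); what you should invoke is the lower bound $a_n>1/C$, which gives $g(u)\to\infty$ as $u\to 1^-$ along the reals (the finitely many unrestricted initial terms are harmless), so $R$ genuinely has a pole at $u=1$, necessarily simple by the boundedness argument. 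Second, the step ``boundedness of $a_n$ forces every unit-circle pole to be simple'' rests on the fact that a combination $\sum_j Q_j(n)u_j^{-n}$ over finitely many distinct unimodular $u_j$ with some $\deg Q_j\geq 1$ cannot stay bounded (contributions from distinct $u_j$ cannot cancel; e.g.\ average against $u_{j_0}^{\,n}$); this is standard but worth stating, since it is the whole content of the claim. In part \eqref{it:Dirichlet.3} your estimate $F_p(s)(1-p^{-s})=1+O(p^{-\delta-\sigma})$ is uniform in $p$ but the implied constant depends on a lower bound for $\sigma$ through $1/(1-2^{-\sigma})$; that is exactly what one needs for locally uniform convergence on compacta of $\Re(s)>\max(0,1-\delta)$, which suffices for the continuation claimed (and covers every application in the paper, where $\delta\leq 1$).
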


Claim \ref{item:P.p.pole} of Theorem \ref{thm:igu} follows from Lemma \ref{lem:Dirichlet}(\ref{it:Dirichlet.1},\ref{it:Dirichlet.2}), Theorem \ref{thm:intro.count}, and the rationality of $\mathcal{P}_{X,p}(s)$. Claim \ref{item:P.zeta} of Theorem \ref{thm:igu} follows from Lemma \ref{lem:Dirichlet}\eqref{it:Dirichlet.3} and Theorem \ref{thm:intro.count}. Claim \ref{it:3} of Theorem \ref{thm:igu} follows from claim \ref{item:P.zeta} and \cite[Theorem 4.20]{dSG}.

Finally, claim \ref{item:Z.zeta} of Theorem \ref{thm:igu} follows from Lemma \ref{lem:Dirichlet}\eqref{it:Dirichlet.3}, Theorem \ref{thm:intro.count}, and the fact that
\[
\mathcal{Z}_{X,p}(s)=\sum_n \left( h_X(\mathbb{Z} / p^n)-\frac{h_X(\mathbb{Z} / p^{n+1})}{p} \right) p ^{-n(s+1)}.
\]

\begin{remark} \label{rem:l.c.i.} In view of Remark \ref{rem:l.c.i.h.bdd}, Lemma \ref{lem:Dirichlet} also implies that, if $X$ is local complete intersection, then one can take $S$ to be empty in Theorem \ref{thm:igu}.
\end{remark}

\end{document}